\documentclass{article}
\usepackage[a4paper]{geometry}
\usepackage{amssymb}
\usepackage{systeme}
\usepackage[english]{babel}
\usepackage{color}
\usepackage{amsmath}
\usepackage{amsthm}
\usepackage{graphicx}
\usepackage{geometry}
\usepackage{mathtools}
\usepackage{comment}
\usepackage[normalem]{ulem}
\allowdisplaybreaks

\usepackage[urlcolor=black]{hyperref}
\hypersetup{
    colorlinks=true, 
    linktoc=all,     
    linkcolor=black,  
    citecolor=black
}

\theoremstyle{definition}
\newtheorem{theorem}{Theorem}[section]
\newtheorem{definition}[theorem]{Definition}

\newtheorem{corollary}[theorem]{Corollary}
\newtheorem{lemma}[theorem]{Lemma}
\newtheorem{remark}[theorem]{Remark}
\newtheorem{proposition}[theorem]{Proposition}

\title{Towards resurgence of Joyce structures}
\date{\small School of Mathematics and Statistics\\ University of Sheffield\\
Hounsfield Road, Sheffield S3 7RH, United Kingdom}
\author{Iv\'an Tulli}
\numberwithin{equation}{section}

\begin{document}
\emergencystretch 3em
\maketitle
\begin{abstract}
    Given a Joyce structure, we show that the associated $\mathbb{C}^*$-family of non-linear connections $\mathcal{A}^{\epsilon}$ can be gauged to a standard form $\mathcal{A}^{\epsilon,\text{st}}$ by a gauge transformation $\hat{g}$, formal in $\epsilon$. We show that the corresponding infinitesimal gauge transformation $\dot{g}=\log(\hat{g})$ has a convergent Borel transform, provided $\dot{g}$ vanishes on the base of the Joyce structure.  This establishes the first step in showing that such a $\dot{g}$ is resurgent. We also use $\hat{g}$ to produce formal twistor Darboux coordinates for the complex hyperk\"{a}hler structure associated to the Joyce structure, and show a similar result about convergence of the Borel transform of the formal twistor Darboux coordinates.  
\end{abstract}

\tableofcontents
\section{Introduction}

The topic of this paper is motivated by two related lines of work: the work of T. Bridgeland related to Donaldson-Thomas (DT) invariants and Joyce structures \cite{BridgeJoyce, TwistorJoyce}, and the work of M. Kontsevich and Y. Soibelman relating resurgent series to analytic stability data \cite{KSresurgence}.\\

The notion of Joyce structure was introduced by T. Bridgeland \cite{BridgeJoyce} in the context of DT invariants of a 3d Calabi-Yau category $\mathfrak{X}$ and its associated space of stability conditions $M=\text{Stab}(\mathfrak{X})$ \cite{BridgeStab}. It is conjectured to describe a geometric structure on the tangent bundle $TM$, whose construction involves solving a family (parametrized by $TM$) of non-linear Riemann-Hilbert problems determined by the DT invariants \cite{varBPS}. Several examples of Joyce structures have been studied in different contexts, including integrable systems and topological string theory \cite{BridgeJoyce, BridgeFab, JoyceQuad, BridgelandOsc, DunakTimJoyce, TimJoyce, Alexandrov_2021, Alexandrov_2021_con}.  \\

While Joyce structures were introduced in the aforementioned context, they can be formulated over holomorphic symplectic manifolds (or more generally holomorphic Poisson manifolds \cite{BridgeJoyce}). Roughly speaking, a Joyce structure over a holomorphic symplectic manifold $(M,\Omega)$ consists of a family of non-linear connections $\mathcal{A}^{\epsilon}$ on the canonical projection $\pi:TM \to M$, parametrized by $\epsilon \in \mathbb{C}^{*}=\mathbb{C}-\{0\}$. The family $\mathcal{A}^{\epsilon}$ must be flat and symplectic, and has the form 
\begin{equation}\label{Joyceconn}
    \mathcal{A}^{\epsilon}=h+\epsilon^{-1}v,
\end{equation}
where $h:\pi^*(TM)\to T(TM)$ is a fixed connection, and $v:\pi^*(TM)\to \text{Ker}(\pi_*)$ is the canonical identification between $\pi^*(TM)$ and the vertical bundle $\text{Ker}(\pi_*)$. There are additional structures and conditions, but we will omit them for now (see Section \ref{Joyce_section} for more details).\\

One of the objectives of this work is to study the formal classification of Joyce structures, where we allow for gauge transformations $\hat{g}$ that are given by formal series in $\epsilon$. In order to explain more clearly what we mean by formal classification of Joyce structures, let us introduce a few more notions: 
\begin{itemize}
\item On one hand, it is convenient to think of the family of connections $\mathcal{A}^{\epsilon}$ as a single relative connection $\mathcal{A}$ on $p:TM\times \mathbb{C}\to M\times \mathbb{C}$ with a simple pole at $\epsilon=0$, where $p$ acts by $\pi$ on the first factor and as the identity on the second factor. The connection is relative with respect to the canonical projection $M\times \mathbb{C}\to \mathbb{C}$ since it does not lift any vectors tangent to the $\mathbb{C}$-factor. One can then replace the $\mathbb{C}$-factors by the formal disc $\widehat{D}=\text{Spf}(\mathbb{C}[[\epsilon]])$ and obtain a meromorphic relative connection on $\widehat{p}: TM\times \widehat{D}\to M\times \widehat{D}$, where we think of the spaces $TM\times \widehat{D}$ and $M\times \widehat{D}$ as formal analytic spaces. In this setting, we will consider the group $\widehat{\mathcal{G}}$ of gauge transformations of $\widehat{p}: TM\times \widehat{D}\to M\times \widehat{D}$ extending the identity at $\epsilon=0$, and its action on connections on $\widehat{p}$ (see Section \ref{formal_setting} for more details). 

\item On the other hand, let us introduce an additional structure on $(M,\Omega)$ that is present in all Joyce structures. This additional structure is a flat torsion-free linear connection $\nabla$ on $M$, such that $\nabla \Omega =0$. With respect to this additional structure the expression \eqref{Joyceconn} of $\mathcal{A}^{\epsilon}$ can be refined to 
\begin{equation*}
    \mathcal{A}=\mathcal{H}+\omega+\epsilon^{-1}v\,,
\end{equation*}
where $\mathcal{H}:\pi^*(TM)\to T(TM)$ is the horizontal lift induced by $\nabla$, and $\omega:\pi^*(TM)\to \text{Ker}(\pi_*)$ is the vertical difference $\omega:=h-\mathcal{H}$. \\
\end{itemize}

In Theorem \ref{mt1}, we show that we can always find a formal gauge transformation $\hat{g}\in \widehat{\mathcal{G}}$, gauging $\mathcal{A}$ to the standard form
\begin{equation*}
    \mathcal{A}^{\text{st}}=\mathcal{H}+\epsilon^{-1}v\,.
\end{equation*}
Furthermore, $\hat{g}$ is unique if we impose that it restricts to the identity on the zero section $M\subset TM$ (or equivalently, the infinitesimal gauge transformation $\dot{g}=\log(\hat{g})$ vanishes on $M$). \\

We remark that this is very much related to the work of M. Kontsevich and Y. Soibelman \cite[Section 5.3]{KSresurgence}, where they study a class of non-linear formal connections called almost standard connections, and show they are formally gauge equivalent to a standard form, called standard connections.  However, our approach does not use the structure of algebraic vector fields on algebraic tori, and uses only the relative connections coming from \eqref{Joyceconn}, rather than the full formal connections also lifting tangent vectors in the $\epsilon$-direction.  \\

In subsequent work by T. Bridgeland and I. Strachan \cite{BriStr}, it was shown that a Joyce structure encodes a complex hyperk\"{a}hler ($\mathbb{C}$-HK) structure on $N=TM$. This is a holomorphic analogue of a usual hyperk\"{a}hler structure, where all tensors (i.e. the metric and the three complex structures) are now holomorphic with respect to the complex structure of $N$ coming from $M$. Since a Joyce structure has an associated $\mathbb{C}$-HK structure, it also has an associated twistor space $p:\mathcal{Z}\to \mathbb{P}^1$ (see \cite{TwistorJoyce}). In fact, when Joyce structures are built from solutions of non-linear Riemann-Hilbert problems associated to DT-invariants, the (logarithms of the) solutions are expected to descend to Darboux coordinates for the $\mathcal{O}(2)$-twisted family of relative holomorphic symplectic forms $\varpi$ of the twistor space $\mathcal{Z}$. In Proposition \ref{flatcoordsprop}, we construct formal twistor Darboux coordinates for $\varpi$ from the formal gauge transformation $\hat{g}$ gauging $\mathcal{A}^{\text{st}}$ to $\mathcal{A}$.\\

Now consider a formal gauge transformation $\hat{g}$ gauging $\mathcal{A}^{\text{st}}$ to $\mathcal{A}$, and denote by $\dot{g}=\log(\widehat{g})$ the corresponding infinitesimal gauge transformation. More precisely, $\dot{g}$ is a formal series
\begin{equation}\label{dotgintro}
    \dot{g}=\sum_{k=1}^{\infty}\dot{g}_k\epsilon^k
\end{equation}
where $\dot{g}_k$ are vector fields on $TM$ vertical with respect to $\pi:TM\to M$ (i.e. sections of $\text{Ker}(\pi_*)\to TM$). The other main purpose of this work is to show the first step towards establishing that $\dot{g}$ is resurgent. The series $\dot{g}$ is resurgent if:
\begin{itemize}
    \item The Borel transform
\begin{equation}\label{btintro}
    \mathcal{B}[\dot{g}](\xi):=\sum_{k=1}^{\infty}\frac{\dot{g}_k}{(k-1)!}\xi^{k-1}
\end{equation}
converges in a neighbourhood of $\xi=0$.
\item There is some closed discrete subset $S\subset \mathbb{C}$ such that $\mathcal{B}[\dot{g}](\xi)$ admits analytic continuations in $\xi$ along paths in $\mathbb{C}-S$ starting from the neighbourhood of convergence of \eqref{btintro}.
\end{itemize} Under mild assumptions on the growth of the analytic continuations of $\mathcal{B}[\dot{g}](\xi)$ as $\xi\to \infty$, one can consider the Borel summation of $\dot{g}$ along a ray $\rho=\mathbb{R}_{>0}e^{\mathrm{i}s}$ avoiding $S$, given by the Laplace transform
\begin{equation}\label{BorelSum}
    \dot{g}_{\rho}(\epsilon):=\int_{\rho}e^{-\eta/\epsilon}\mathcal{B}[\dot{g}](\eta)\mathrm{d\eta}\,.
\end{equation}
While \eqref{dotgintro} is frequently divergent for $\epsilon \neq 0$, if $\dot{g}_{\rho}(\epsilon)$ exists, it is an analytic function of $\epsilon$ in some sector of $\mathbb{C}$, whose asymptotic expansion as $\epsilon \to 0$ reproduces $\dot{g}$ (see \cite{MitschiSauzin2016DSSR1} for more information on resurgence). \\

Whenever $\dot{g}$ is resurgent one can associate the so-called Stokes automorphisms via the alien calculus. When the Borel sums $\dot{g}_{\rho}$ exist for rays $\rho$ avoiding $S$, the Stokes automorphisms can be interpreted as the jumps of $\dot{g}_{\rho}$ as we vary $\rho$ across points of $S$. In applications of resurgence to topological string theory and Riemann-Hilbert problems, the Stokes automorphisms have been associated to DT/BPS-invariants (see for example \cite{GPKM,AMM,  ASTT,BriTul}). Given the relation of Joyce structures to DT-invariants, it would be very interesting to study whether $\dot{g}$ is resurgent, and if so, its Stokes automorphisms and their relations to the aforementioned Riemann-Hilbert problem. \\

In Theorem \ref{mt2} we show that the $\dot{g}$ built in Theorem \ref{mt1} always has convergent $\mathcal{B}[\dot{g}]$, and give a necessary and sufficient condition for convergence of $\mathcal{B}[\dot{g}]$ for general $\dot{g}$ gauging $\mathcal{A}^{\text{st}}$ to $\mathcal{A}$. In Theorem \ref{mt3} we show a similar result for the formal twistor Darboux coordinates built from $\dot{g}$. \\

While the results of the previous paragraph are the first step towards showing that $\dot{g}$ and the formal twistor Darboux coordinates are resurgent,  we still do not have a solid understanding of the analytic continuations of their Borel transforms. We comment on a possible approach in Section \ref{endlesssec}, and intend to come back to this issue in future work. \\

We finish by presenting two examples associated to the DT-theory of the A1 and A2 quiver. While the A1 example is simple and everything can be done explicitly, we will show that wildly different things can happen for formal gauge transformations gauging $\mathcal{A}$ to $\mathcal{A}^{\text{st}}$. In particular, we will construct two such formal gauge transformations: one having a convergent $\dot{g}$ (and hence entire Borel transform) and one having a divergent $\dot{g}$. The one with divergent $\dot{g}$ turns out to be resurgent, and the analytic continuation of its Borel transform has the expected singularity structure and Stokes automorphisms related to the DT theory of the A1 quiver. Finally, in the case of the Joyce structure associated to the A2 quiver we compute the first two vector fields $\dot{g}_1$ and $\dot{g}_2$ in the $\epsilon$-expansion of a $\dot{g}$, in terms of explicit algebraic functions. These in turn can be used to compute expansions of Darboux twistor coordinates and associated objects like $\tau$-functions. 

\subsection{Structure of the paper}
\begin{itemize}
    \item In Section \ref{sec2} we introduce several preliminaries needed for the main results of the paper. This includes a quick summary on Ehresmann connections, gauge transformations, relative connections, and their formal analogues. We will not need the full generality of formal analytic spaces, so the discussion of formal analytic spaces and morphisms between them in Section \ref{formal_setting} will be only limited to our simple case. We finish with a summary of Joyce structures and their twistor spaces. 
    \item In Section \ref{sec3} we introduce the trivial Joyce structure associated to a holomorphic symplectic manifold with period structure, and prove  in Theorem \ref{mt1} that any Joyce structure can be formally gauged to the trivial one. In Proposition \ref{flatcoordsprop} we produce formal twistor Darboux coordinates by using the previous formal gauge transformation.  
    \item In Section \ref{sec4} we study the Borel transforms of infinitesimal gauge transformations and formal twistor Darboux coordinates. We show in Theorem \ref{mt2} that the Borel transform of the infinitesimal gauge transformation from Theorem \ref{mt1} converges in a neighbourhood of $\xi=0$, and in Proposition \ref{flatcoordsprop} we show an analogous result for the formal twistor Darboux coordinates. 
    \item In Section \ref{exsec} we illustrate applications of the previous results with two examples associated to the DT-theory of the A1 and A2 quiver.  
\end{itemize}

\subsection{Conventions}

Unless otherwise stated, all our objects and morphisms will be holomorphic. We will use the Einstein summation convention when writing tensors in coordinates. \\

\textbf{Acknowledgements:} the author is very grateful to T. Bridgeland for many helpful discussions.

\section{Preliminaries}\label{sec2}

In this section we will recall some basic notions about connections in the sense of Ehresmann (see for example \cite{ECref}), Joyce structures, and the formal setting that we will work with. Readers familiar with these notions should quickly skim through the section to be aware of the notations. \\

While many notions presented in this section hold in more generality, all our objects and morphisms will be holomorphic unless otherwise stated. 

\subsection{Ehresmann connections}
Let us fix a holomorphic surjective submersion $\pi:E\to B$. 
\begin{definition}
    An Ehresmann connection on $\pi:E\to B$ is a vector bundle map $\mathcal{A}:\pi^*(TB)\to TE$ such that $\pi_*\circ \mathcal{A}=\text{Id}_{\pi^*(TB)}$, where $\pi_*:TE\to \pi^*(TB)$ is the pushforward induced $\pi$,  and $\text{Id}_{\pi^*(TB)}$ is the identity map of $\pi^*(TB)$. 
\end{definition}
\begin{remark}\label{remconnot} In the following we will refer to Ehresmann connections simply as connections, and if $X\in \pi^*(TB)$, we will frequently denote the evaluation of $\mathcal{A}$ on $X$ by $\mathcal{A}_X:=\mathcal{A}(X)$. Given any local vector field $X$ on  $B$, we have the canonically induced pullback section $\pi^*X$ of $\pi^*(TB)\to E$. When we evaluate a connection $\mathcal{A}$ on a pullback section, we will abuse notation and simply denote it by $\mathcal{A}_{X}$ instead of $\mathcal{A}_{\pi^*X}$.\end{remark}

Given $\pi:E\to B$ as before, we have the associated vertical vector bundle $V_{\pi}\to E$ where the fiber over $e\in E$ is given by $V_{\pi}|_{e}:=\text{Ker}(\pi_*|_e)\subset T_eE$. If we are given any connection $\mathcal{A}$ on $\pi:E\to B$, we have a splitting
\begin{equation*}
    TE=\text{Im}(\mathcal{A})\oplus V_{\pi}
\end{equation*}
where $\text{Im}(\mathcal{A})$ denotes the image of $\mathcal{A}$. In particular, $\text{Im}(\mathcal{A})$ is horizontal with respect to $\pi$, and $\mathcal{A}$ defines a horizontal lift of tangent vectors of $B$. 

\begin{definition}\label{flatdef}
    A connection $\mathcal{A}$ on $\pi:E\to B$ is flat if the horizontal distribution $\text{Im}(\mathcal{A})\subset TE$ is involutive. That is 
    \begin{equation*}
        [\text{Im}(\mathcal{A}), \text{Im}(\mathcal{A})]\subset \text{Im}(\mathcal{A})\,.
    \end{equation*}
\end{definition}
Given a connection $\mathcal{A}$ on $\pi:E\to B$, a smooth path $\gamma:[0,1]\to B$, and a point $e\in E_{\gamma(0)}:=\pi^{-1}(\gamma(0))$, there exists a unique path $\alpha:[0,\delta]\to E$ for some $\delta>0$ such that $\alpha(0)=e$ and 
\begin{equation*}
    \frac{\mathrm{d}}{\mathrm{d}t}\alpha(t)= \mathcal{A}\left(\frac{\mathrm{d}}{\mathrm{d}t}\gamma(t)\right)\,.
\end{equation*}
\begin{definition}
    The path $\alpha$ is called the horizontal lift of $\gamma$ with respect to $\mathcal{A}$, starting at $e$. For $t\in [0,\delta]$, the parallel transport of $e$ along $\gamma$ for time $t$ is given by $\alpha(t)$.
\end{definition}
Given a fixed $\delta>0$ and $t<\delta$, we can assemble all horizontal lifts of $\gamma$ defined on $[0,t]$ in a single diffeomorphism between open subsets of the fibers $E_{\gamma(0)}$ and $E_{\gamma(t)}$. More specifically, there are open subsets $U_{0}\subset E_{\gamma(0)}$ and $U_t\subset E_{\gamma(t)}$ and a diffeomorphism $\text{PT}_t:U_0\to U_t$ such that $\text{PT}_t(e)=\alpha(t),$ where $\alpha$ is the horizontal lift of $\gamma$ with $\alpha(0)=e.$

\subsubsection{Gauge transformations}
We now recall the notion of gauge transformations in the context of submersions, and their action on connections. 

\begin{definition}A gauge transformation on a surjective holomorphic submersion $\pi:E\to B$ is a biholomorphism $g:E\to E$ such that $\pi \circ g=\pi$.
\end{definition}
Gauge transformations of $\pi:E\to B$ form a group $\mathcal{G}$ and act on the right on connections $\mathcal{A}$ of $\pi$ as follows. Given $(e,X)\in \pi^*(TB)$, we define $ (\mathcal{A}\cdot g):\pi^*(TB)\to TE$ as 
\begin{equation}\label{gaugetrans}
    (\mathcal{A}\cdot g)(e,X)=\mathrm{d}g^{-1}|_{g(e)}(\mathcal{A}(g(e),X))\,.
\end{equation}
The map $\mathcal{A}\cdot g$ satisfies the following with respect to $\pi_*$:
\begin{equation*}
    \pi_*\circ (\mathcal{A}\cdot g)(e,X)=(e, \mathrm{d}\pi(\mathrm{d}g^{-1}|_{g(e)}(\mathcal{A}(g(e),X)))=(e, \mathrm{d}\pi|_{g(e)}(\mathcal{A}(g(e),X)))=(e,X)\,,
\end{equation*}
so $\mathcal{A}\cdot g$ is a connection. Furthermore, if we have a path of gauge transformations $g_t$ with $t\in (-\epsilon,\epsilon)$ and such that $g_{0}=\text{Id}_{E}$, we have the associated infinitesimal gauge transformation $\dot{g}$ defined by
\begin{equation*}
    \dot{g}_e:=\frac{\mathrm{d}g_t(e)}{\mathrm{d}t}\Bigg|_{t=0}\in T_eE\,.
\end{equation*}
It is a vector field on $E$ that satisfies $\mathrm{d}\pi(\dot{g})=0$ (since $\pi\circ g_t=\pi$), so $\dot{g}$ is a vertical vector field on $E$. Conversely, if we start with a vertical vector field $\dot{g}$ on $E$ and assume that the flow $\phi_t$ exists for a time interval that is uniform for all points of $E$, then each $\phi_t:E\to E$ is a gauge transformation. 
\subsubsection{Relative connections}

Consider two surjective holomorphic submersions $\pi:E\to B$ and $q:B\to S$. Furthermore, denote by $V_{q}\to B$, $V_{\pi}\to E$, and  $V_{q\circ \pi}\to E$, the vertical bundles associated to $q$, $\pi$, and $q\circ \pi$, respectively. 

\begin{definition} A relative  connection on $\pi:E\to B$, relative to $q:B\to S$, is a bundle map 
\begin{equation*}
    \mathcal{A}:\pi^*V_q\to V_{q\circ \pi}
\end{equation*}
such that $\pi_*\circ \mathcal{A}=\text{Id}_{\pi^*V_q}$.\end{definition}
This means that we have the splitting
\begin{equation*}
    V_{q\circ \pi}=\text{Im}(\mathcal{A})\oplus V_{\pi}.
\end{equation*}
In other words, $\mathcal{A}$ defines a horizontal lift of tangent vectors of $B$ that are vertical with respect to $q:B\to S$, and the lifts are vertical with respect to $q\circ \pi:E\to S$. Note that the previous group $\mathcal{G}$ of gauge transformations of $\pi:E\to B$ acts on the right on relative connections with the same formula \eqref{gaugetrans} as before.  In the same way as with flatness of connections (Definition \ref{flatdef}), we will say that the relative connection $\mathcal{A}$ is flat if $\text{Im}(\mathcal{A})\subset V_{q\circ \pi}\subset TE$ is involutive.\\

We will only consider relative connections in the following simple setting. We take
\begin{equation*}
    E = TM\times \mathbb{C}, \quad B=M\times \mathbb{C}, \quad S= \mathbb{C},
\end{equation*}
and if $\pi:TM\to M$ is the canonical projection of the tangent bundle, we consider the projections
\begin{equation*}
\begin{split}
    p&:E\to B, \quad p(X,\lambda)=(\pi(X),\lambda),\\
    q&:B \to S, \quad q(b,\lambda)=\lambda.
\end{split}
\end{equation*}
Restricting the relative connection $\mathcal{A}$ to $TM\times \{\epsilon\}$ we obtain a bundle map 
\begin{equation*}
    \mathcal{A}^{\epsilon}:\pi^*V_q|_{TM\times \{\epsilon\}}\to V_{q\circ \pi}|_{TM\times \{\epsilon\}}\,.
\end{equation*}
Noting that $p^*V_q|_{TM\times \{\epsilon\}}\cong \pi^*(TM)$ and $V_{q\circ \pi}|_{TM\times \{\epsilon\}}\cong T(TM)$, we see that we obtain a usual connection on $\pi: TM\to M$. Hence, the data of a relative connection in this case can be thought as a family of connections on $TM\to M$ parametrized by $\mathbb{C}$.

\begin{remark}\label{flatrem}
    Note that the flatness of  relative connection $\mathcal{A}$ on $p:TM\times \mathbb{C}\to M\times \mathbb{C}$ is equivalent to the condition that for any two local vector fields $X,Y$ on $M$, we have 
    \begin{equation}\label{flatsimple}
        [\mathcal{A}_X,\mathcal{A}_Y]=\mathcal{A}_{[X,Y]}\,.
    \end{equation}
    In particular, if $(Z^i)$ are local coordinates on $M$, it is enough to check that 
    \begin{equation}\label{flatcoords}
        [\mathcal{A}_{\partial_{Z^i}}, \mathcal{A}_{\partial_{Z^j}}]=0\,, \quad i,j=1,...,\text{dim}_{\mathbb{C}}(M)\,.    \end{equation}
\end{remark}
\subsection{Formal completions and formal relative connections}\label{formal_setting}
In the previous section we considered a relative connection $\mathcal{A}$ on $p:TM\times \mathbb{C}\to M\times \mathbb{C}$, relative to the projection $q:M\times \mathbb{C}\to \mathbb{C}$. In what follows, we will need to  replace the $\mathbb{C}$-factors with the formal disk $\widehat{D}=\text{Spf}(\mathbb{C}[[\epsilon]])$, where the latter denotes the formal spectrum of $\mathbb{C}[[\epsilon]]$. To do this we will need to use some simple notions from the theory of formal analytic spaces \cite{GrauertRemmert1984}. \\

Given the complex manifold $M\times \mathbb{C}$, we have an associated analytic space given by considering the locally ringed space $(M\times \mathbb{C},\mathcal{O}_{M\times \mathbb{C}})$ where $\mathcal{O}_{M\times \mathbb{C}}$ is the sheaf of holomorphic functions on $M\times \mathbb{C}$. We now define $M\times \widehat{D}$ as follows

\begin{definition} $M\times \widehat{D}$ is the formal analytic space \begin{equation*}M\times \widehat{D}:=(M,\widehat{\mathcal{O}}_{M})\end{equation*} obtained by doing the formal completion of $M\times \mathbb{C}$\ along $M\times \{0\}$. Namely, given the canonical coordinate $\epsilon$ on the $\mathbb{C}$-factor, we consider the ideal sheaf $\mathcal{I}=(\epsilon)\subset \mathcal{O}_{M\times \mathbb{C}}$ and take the inverse limit
\begin{equation*}
    \widehat{\mathcal{O}}_M:=\lim_{\longleftarrow}\frac{\mathcal{O}_{M\times \mathbb{C}}}{\mathcal{I}^{n}}\,.
\end{equation*}
The sheaf $\widehat{\mathcal{O}}_{M}$ over $M$ satisfies
\begin{equation*}
    \widehat{\mathcal{O}}_{M}(U)=\mathcal{O}_M(U)\otimes \mathbb{C}[[\epsilon]]\,, \quad U\subset M\,.
\end{equation*}
The same applies for going from $TM\times \mathbb{C}$ to $TM\times \widehat{D}=(TM,\widehat{\mathcal{O}}_{TM})$, while $\widehat{D}$ is simply the formal analytic space $\widehat{D}=(\{0\}, \mathbb{C}[[\epsilon]])$.
\end{definition}

The maps $\widehat{p}:TM\times \widehat{D}\to M\times \widehat{D}$ and $\widehat{q}:M\times \widehat{D}\to \widehat{D}$ are now morphisms between formal analytic spaces corresponding to $p$ and $q$.  Morphisms $\widehat{f}=(f,f^{\sharp})=\widehat{X}\to \widehat{Y}$ between formal analytic spaces $\widehat{X}=(X,\widehat{\mathcal{O}}_{X})$ and $\widehat{Y}=(Y,\widehat{\mathcal{O}}_{Y})$ consist of a continuous map $f:X\to Y$ between the underlying topological spaces, together with a morphism of sheaves $f^{\sharp}:\mathcal{O}_{\widehat{Y}}\to f_*\mathcal{O}_{\widehat{X}}$ between topological local rings. In this context $\widehat{p}$ and $\widehat{q}$ are given in the following definition.

\begin{definition}
    The morphism $\widehat{p}:TM\times \widehat{D}\to M\times \widehat{D}$ of formal analytic spaces is given by $\widehat{p}=(\pi,p^{\sharp})$ where $\pi:TM\to M$ is the canonical projection and 
\begin{equation}\label{psharp}
    p^{\sharp}:\widehat{\mathcal{O}}_{M} \to \pi_*\widehat{\mathcal{O}}_{TM}, \quad p^{\sharp}\left(\sum_{k=0}^{\infty}f_k\epsilon^k\right)=\sum_{k=0}^{\infty}\pi^*(f_k)\epsilon^k\,.
\end{equation}
Similarly, $\widehat{q}$ is given by the projection $q|_{M\times \{0\}}:M\to \{0\}$ and $q^{\sharp}$ is given by pullback by $q$ as in \eqref{psharp}.  
\end{definition}

We now define the formal analogue of a relative connection $\mathcal{A}$ on $TM\times \mathbb{C}\to M\times \mathbb{C}$, relative to $q:M\times \mathbb{C}\to \mathbb{C}$.

\begin{definition}
A relative connection $\mathcal{A}$ on $\widehat{p}:TM\times \widehat{D}\to M\times \widehat{D}$ relative to the canonical projection $\hat{q}:M\times \widehat{D}\to \widehat{D}$ is given by a $\mathbb{C}[[\epsilon]]$-linear bundle map 
\begin{equation*}
    \mathcal{A}:\pi^*(TM)\otimes \mathbb{C}[[\epsilon]]\to T(TM)\otimes \mathbb{C}[[\epsilon]]\,
\end{equation*}
such that $\pi_*\circ \mathcal{A}=\text{Id}_{\pi^*(TM)\otimes \mathbb{C}[[\epsilon]]}$.

\end{definition}
One can check that a formal relative connection $\mathcal{A}$ as above is given by a formal expression of the form
\begin{equation*}
    \mathcal{A}=h+\sum_{k=1}^{\infty}\omega^{(k)}\epsilon^k\,,
\end{equation*}
where $h$ and $\omega^{(k)}$ are $\mathbb{C}[[\epsilon]]$-linear extensions of a connection $h:\pi^*(TM)\to T(TM)$ on $\pi: TM\to M$, and bundle maps $\omega^{(k)}:\pi^*(TM)\to V_{\pi}$. $\mathcal{A}$ is indeed relative to $\hat{q}:M\times \widehat{D}\to \widehat{D}$ since it does not lift any vectors in the formal direction $\partial_{\epsilon}$, and all lifted vectors point along $TM$.\\

As in Remark \ref{flatrem}, the flatness of the relative connection $\mathcal{A}$ on $\hat{p}$ is equivalent to checking \eqref{flatsimple} or \eqref{flatcoords}.
\subsubsection{Formal gauge transformations}
We would like to consider formal gauge transformations of $\widehat{p}:TM\times \widehat{D}\to M\times \widehat{D}$. We will first  describe the infinitesimal gauge transformations vanishing at $\epsilon=0$. 

\begin{definition}\label{formalgauge} 
The Lie algebra of infinitesimal gauge transformations of $\widehat{p}:TM\times \widehat{D}\to M\times \widehat{D}$ vanishing at $\epsilon =0$ is given by 
\begin{equation*}
    \text{Lie}(\widehat{\mathcal{G}}):=\epsilon\cdot \Gamma(TM,V_{\pi})[[\epsilon]]=\left\{\dot{g}=\sum_{k=1}^{\infty}\dot{g}_k\epsilon^k \quad \Bigg| \quad \dot{g}_k\in \Gamma(TM,V_{\pi})\right\},
\end{equation*}
where $\Gamma(TM,V_{\pi})$ denotes  sections of $V_{\pi}\to TM$ (i.e. vector fields on $TM$, vertical with respect to $\pi:TM \to M$).
\end{definition}
Given $\dot{g}\in \text{Lie}(\widehat{\mathcal{G}}),$ the exponential $e^{\dot{g}}$ generates a gauge transformation of $\widehat{p}:TM\times \widehat{D}\to M\times \widehat{D}$ extending the identity map of $TM\times \{0\}$. Namely, we have the morphism of formal analytic spaces $\hat{g} = (\text{Id}_{TM}, e^{\dot{g}}):TM\times \widehat{D}\to TM\times \widehat{D}$ where $e^{\dot{g}}: \widehat{\mathcal{O}}_{TM}\to \widehat{\mathcal{O}}_{TM}$ acts on functions $f\in \hat{\mathcal{O}}_{TM}(U)\cong \mathcal{O}_{TM}(U)[[\epsilon]]$ by \begin{equation}\label{formgauge}
    e^{\dot{g}}f=\sum_{n=0}^{\infty}\frac{\dot{g}^k}{k!}f\,.
\end{equation}
Note that $\hat{g}$ satisfies $\hat{\pi}\circ \hat{g}=\hat{\pi}$. 
Indeed, for functions $h\in \mathcal{O}_{TM}(\pi^{-1}(U))[[\epsilon]]$ of the form $h=\sum_{k=0}^{\infty}(\pi^*h_k)\epsilon^k$ we have $e^{\dot{g}}h=h,$ so it fixes all functions on $TM\times \widehat{D}$ coming from the base.
$e^{\dot{g}}$ should be thought at the formal analog of the action of the time $1$ flow of $\dot{g}$ on the function $f$ in the setting of usual submersions between complex manifolds. 

\begin{remark} Note that the fact that $\dot{g}$ vanishes at $\epsilon=0$ guarantees that \eqref{formgauge} makes sense as a formal series in $\epsilon$. In other words, the coefficient of each power $\epsilon^k$ gets finitely many contributions when expanding \eqref{formgauge} in powers of $\epsilon$. 
\end{remark}

\begin{definition}
    We denote the formal gauge transformations of $\widehat{p}:TM\times \widehat{D}\to M\times \widehat{D}$ extending the identity at $\epsilon=0$ by
    \begin{equation*}
        \widehat{\mathcal{G}}=\{\widehat{g}=(\text{Id}_{TM}, e^{\dot{g}}), \quad \dot{g}\in \text{Lie}(\widehat{\mathcal{G}})\}\,.
    \end{equation*}
    The set $\widehat{\mathcal{G}}$ is a group via the Baker-Campbell-Hausdorff (BCH) formula. 
\end{definition}

On the other hand, $\dot{g}$ also generates an action on (relative)  connections on $\hat{p}$ via the exponential of the Lie derivative $\mathcal{L}_{\dot{g}}=\sum_{k=1}^{\infty}\mathcal{L}_{\dot{g}_k}\epsilon^k$. Indeed,  given $X=(V,W)\in \pi^*(TM)$, we can extend $W$ to a local vector field of $M$, and consider the induced pullback section $\widetilde{X}$ of $\pi^*(TM)$ satisfying $\widetilde{X}|_V=X$. We then define $(e^{\mathcal{L}_{\dot{g}}}\mathcal{A})(X)$ by 
\begin{equation}\label{formgaugeconn}
    (e^{\mathcal{L}_{\dot{g}}}\mathcal{A})(X):= \sum_{k=0}^{\infty}\frac{\mathcal{L}_{\dot{g}}^k(\mathcal{A}_{\widetilde{X}})}{k!}\,\Bigg|_{V}\in T_V(TM)\otimes \mathbb{C}[[\epsilon]]\,.
\end{equation}
Note that the sum in \eqref{formgaugeconn} makes sense as a formal power series in $\epsilon$ since $\dot{g}$ does not have an $\epsilon^{0}$-term. Furthermore, it is easy to check that the right hand side of \eqref{formgaugeconn} does not depend of the extension of $W$, and that $\mathcal{L}_{\dot{g}}^k(\mathcal{A}_{\widetilde{X}})$ is always vertical for $k\geq 1$. Extending \eqref{formgaugeconn} linearly in $\mathbb{C}[[\epsilon]]$ we obtain a $\mathbb{C}[[\epsilon]]$-linear bundle map 

\begin{equation*}
    e^{\mathcal{L}_{\dot{g}}}\mathcal{A}:\pi^*(TM)\otimes \mathbb{C}[[\epsilon]]\to T(TM)\otimes \mathbb{C}[[\epsilon]]\,,
\end{equation*}
satisfying $\pi_*\circ \mathcal{A}=\text{Id}_{\pi^*(TM)\otimes \mathbb{C}[[\epsilon]]}$, and hence a connection on $\hat{p}$, relative to $\hat{q}$.\\

The action of $e^{\mathcal{L}_{\dot{g}}}$ on $\mathcal{A}$ is the formal analog of the action by gauge transformations on  $\mathcal{A}$ given by the time $1$ flow of infinitesimal gauge transformation $\dot{g}$ in the setting of usual submersions between complex manifolds. \\

For completeness we end this section by proving the following proposition, which can be skipped on a first reading. While it is clear that gauge transformations preserve flatness of connections in the usual setting of gauge transformations on surjective submersions, the fact that the same holds for the action of formal gauge transformation might not be immediately clear from \eqref{formgaugeconn}. 
\begin{lemma}\label{flatgaugelemma}
    If a relative connection (relative to $\hat{q}$) $\mathcal{A}$ on $\widehat{p}:TM\times \widehat{D}\to M\times \widehat{D}$ is flat, then $e^{\mathcal{L}_{\dot{g}}}\mathcal{A}$ is flat. 
\end{lemma}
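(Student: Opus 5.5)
By Remark \ref{flatrem} it suffices to work locally in coordinates $(Z^i)$ on $M$ and show that $[\mathcal{A}'_{\partial_{Z^i}},\mathcal{A}'_{\partial_{Z^j}}]=0$ for all $i,j$, where $\mathcal{A}':=e^{\mathcal{L}_{\dot{g}}}\mathcal{A}$. Here the bracket is the $\mathbb{C}[[\epsilon]]$-bilinear extension of the Lie bracket of vector fields on $TM$.

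Because the coordinate fields $\partial_{Z^i}$ are globally defined on the chart, the pullback sections $\widetilde{X}$ in \eqref{formgaugeconn} can be taken to be $\pi^*\partial_{Z^i}$ themselves. Hence
\begin{equation*}
\mathcal{A}'_{\partial_{Z^i}}=e^{\mathcal{L}_{\dot{g}}}\mathcal{A}_{\partial_{Z^i}}=\sum_{k\geq 0}\frac{1}{k!}\mathrm{ad}_{\dot{g}}^k\,\mathcal{A}_{\partial_{Z^i}},
\end{equation*}
where $\mathrm{ad}_{\dot{g}}=[\dot{g},\cdot\,]$ acts on the Lie algebra $\mathfrak{X}:=\Gamma(T(TM)|_{\pi^{-1}(U)})[[\epsilon]]$ of formal vector fields. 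Convergence in the $\epsilon$-adic topology is clear, since $\dot{g}\in\epsilon\,\mathfrak{X}$ and so $\mathrm{ad}_{\dot{g}}^k$ raises the $\epsilon$-order by at least $k$.

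The key step is to show that $e^{\mathrm{ad}_{\dot{g}}}$ is a Lie algebra automorphism of $\mathfrak{X}$:
\begin{equation*}
e^{\mathrm{ad}_{\dot{g}}}[Y_1,Y_2]=[e^{\mathrm{ad}_{\dot{g}}}Y_1,e^{\mathrm{ad}_{\dot{g}}}Y_2].
\end{equation*}
The Jacobi identity says that $\mathrm{ad}_{\dot{g}}$ is a derivation of the bracket. Iterating gives the Leibniz formula $\mathrm{ad}_{\dot{g}}^n[Y_1,Y_2]=\sum_{a+b=n}\binom{n}{a}[\mathrm{ad}_{\dot{g}}^aY_1,\mathrm{ad}_{\dot{g}}^bY_2]$. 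Dividing by $n!$ and summing over $n$ yields the identity, because rearranging the double sum is legitimate: only finitely many terms contribute to each fixed power of $\epsilon$. I expect this to be the only point requiring care, namely checking that all manipulations are valid coefficientwise in $\epsilon$, with the bracket being $\mathbb{C}[[\epsilon]]$-bilinear and continuous.

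Finally, flatness of $\mathcal{A}$ gives $[\mathcal{A}_{\partial_{Z^i}},\mathcal{A}_{\partial_{Z^j}}]=0$. Applying the automorphism,
\begin{equation*}
[\mathcal{A}'_{\partial_{Z^i}},\mathcal{A}'_{\partial_{Z^j}}]=e^{\mathrm{ad}_{\dot{g}}}[\mathcal{A}_{\partial_{Z^i}},\mathcal{A}_{\partial_{Z^j}}]=0,
\end{equation*}
which is \eqref{flatcoords} for $\mathcal{A}'$. Since the coordinate chart was arbitrary, $e^{\mathcal{L}_{\dot{g}}}\mathcal{A}$ is flat.
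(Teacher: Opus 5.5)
Your proof is correct and is essentially the paper's argument. Both work in local coordinates and use the Jacobi identity plus binomial combinatorics to show that the terms of $[e^{\mathcal{L}_{\dot g}}\mathcal{A}_i,e^{\mathcal{L}_{\dot g}}\mathcal{A}_j]$ with $m$ applications of $\mathcal{L}_{\dot g}$ in total add up to $\frac{1}{m!}\mathcal{L}_{\dot g}^m[\mathcal{A}_i,\mathcal{A}_j]$, then invoke flatness of $\mathcal{A}$; your Leibniz-rule packaging is a cleaner version of the paper's alternating-sign rearrangement, and your argument goes through unchanged in $\mathfrak{X}[\epsilon^{-1}]$, which is what is needed when the lemma is later applied to $\mathcal{A}^{\text{st}}=\mathcal{H}+\epsilon^{-1}v$.
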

\begin{proof}
     Choose local coordinates $(Z^i)$ on $M$, and 
    let $\mathcal{A}_i:=\mathcal{A}_{\partial_{Z^i}}$. To show that $e^{\mathcal{L}_{\dot{g}}}\mathcal{A}$ is flat, it is enough to show that for arbitrary $i$ and $j$ \begin{equation}\label{gaugeformalflat}
        [e^{\mathcal{L}_{\dot{g}}}\mathcal{A}_i,e^{\mathcal{L}_{\dot{g}}}\mathcal{A}_j]=0\,.
    \end{equation}
    If we denote by $B^{k}(\mathcal{A}_i):=\mathcal{L}_{\dot{g}}^k(\mathcal{A}_i)$, then we can write the above condition as
    \begin{equation*}
        \sum_{k,l=0}^{\infty}\frac{1}{k!l!}[B^k(\mathcal{A}_i),B^l(\mathcal{A}_j)]=0\,.
    \end{equation*}
    We have 
    \begin{equation*}
        \sum_{k,l=0}^{\infty}\frac{1}{k!l!}[B^k(\mathcal{A}_i),B^l(\mathcal{A}_j)]=\sum_{m=0}^{\infty}\frac{1}{m!}\sum_{k+l=m}\binom{m}{k}[B^k(\mathcal{A}_i),B^l(\mathcal{A}_j)]=\sum_{m=0}^{\infty}\frac{1}{m!}\sum_{k=0}^{m}\binom{m}{k}[B^k(\mathcal{A}_i),B^{m-k}(\mathcal{A}_j)]\,.
    \end{equation*}
    On the other hand, by applying Jacobi's identity we can show using induction that 
    \begin{equation*}
        [B^k(\mathcal{A}_i),B^{m-k}(\mathcal{A}_j)]=\sum_{l=0}^{k}\binom{k}{l}(-1)^lB^{k-l}([\mathcal{A}_i,B^{m-k+l}(\mathcal{A}_j)])\,.
    \end{equation*}
    We will show that for each $m$ 
    \begin{equation*}
        \sum_{k=0}^{m}\binom{m}{k}\sum_{l=0}^{k}\binom{k}{l}(-1)^lB^{k-l}([\mathcal{A}_i,B^{m-k+l}(\mathcal{A}_j)])=0\,.
    \end{equation*}
    Setting $p=k-l$ we have 
    \begin{equation}\label{sumreorg1}
        \begin{split}
        \sum_{k=0}^{m}&\binom{m}{k}\sum_{l=0}^{k}\binom{k}{l}(-1)^lB^{k-l}([\mathcal{A}_i,B^{m-k+l}(\mathcal{A}_j)])=\sum_{p=0}^{m}\sum_{l=0}^{m-p}\binom{m}{l+p}\binom{l+p}{l}(-1)^lB^p([\mathcal{A}_i,B^{m-p}(\mathcal{A}_j)])\\
        &=\sum_{p=0}^{m}B^p([\mathcal{A}_i,B^{m-p}(\mathcal{A}_j)])\sum_{l=0}^{m-p}\binom{m}{l+p}\binom{l+p}{l}(-1)^l=\sum_{p=0}^{m}B^p([\mathcal{A}_i,B^{m-p}(\mathcal{A}_j)])\sum_{l=0}^{m-p}\binom{m}{p}\binom{m-p}{l}(-1)^l\\
        &=\sum_{p=0}^{m}B^p([\mathcal{A}_i,B^{m-p}(\mathcal{A}_j)])\binom{m}{p}\sum_{l=0}^{m-p}\binom{m-p}{l}(-1)^l.\\
        \end{split}
    \end{equation}
    Using that \begin{equation}
        (a+b)^n=\sum_{k=0}^n\binom{n}{k}a^kb^{n-k}
    \end{equation}
    we see that \eqref{sumreorg1} is clearly $0$ if $m\neq p$, while in the case $m= p$ we get $0$ due to the flatness of $\mathcal{A}$. Hence, it follows that \eqref{gaugeformalflat} holds.
\end{proof}
\subsection{Joyce Structures}\label{Joyce_section}

We now briefly recall the notion of Joyce Structure, following \cite{TwistorJoyce}. Let $(M,\Omega)$ be a holomorphic symplectic manifold, and $\pi:TM\to M$ the canonical projection. $\Omega$ induces a fiber-wise symplectic structure $\Omega^v$ on the the vertical bundle $V_{\pi}\to TM$ via the canonical identification $v:\pi^*TM\to V_{\pi}$. More specifically, if $p_2:\pi^*TM\to TM$ denotes the projection on the second factor, we have
\begin{equation}\label{verthol}
    \Omega^v:=(p_2\circ v^{-1})^*\Omega\,.
\end{equation}

\begin{definition} If we are given a non-linear connection $h$ on $\pi:TM \to M$, we will say that $h$ is symplectic with respect to $(M,\Omega)$ if $\Omega^{v}$ is invariant under parallel transport. Namely, for any path $\gamma$ on $M$, the corresponding parallel transport map $\text{PT}_{t}:U_{0}\to U_t$ between open subsets $U_0\subset E_{\gamma(0)}$ and $U_t\subset E_{\gamma(t)}$ satisfies
\begin{equation*}
    \text{PT}_t^*(\Omega^v)=  \Omega^v\,.
\end{equation*}
Note that this is well-defined, since $\mathrm{d}(\text{PT}_{t})$ sends vertical vectors to vertical vectors. 
\end{definition}

In \cite[Lemma 2.3]{TwistorJoyce} some simple criteria are stated to check that a connection is symplectic without having to work with parallel transports. 

\begin{definition} A pre-Joyce structure on a holomorphic symplectic manifold $(M,\Omega)$ is a connection $h$ on the canonical projection $\pi:TM\to M$ such that for each $\epsilon\in \mathbb{C}-\{0\}$, the connection \begin{equation}\label{EC}
    \mathcal{A}^{\epsilon}:=h+\epsilon^{-1}v
\end{equation}
is flat and symplectic. 
\end{definition}
\begin{remark}\label{merrem}
    \begin{itemize}
        \item In order to include some interesting examples, one should weaken the above definition and allow $h$ to be meromorphic rather than holomorphic \cite[Remark 2.5]{TwistorJoyce}. In particular, for some effective divisor $D\subset TM$ we should have a bundle map $h:\pi^*(TM)\to T(TM)\otimes \mathcal{O}_{TM}(D)$ satisfying 
        \begin{equation*}
            (\pi_*\otimes \mathcal{O}_{TM}(D))\circ h=\text{Id}_{\pi^*(TM)}\otimes s_D
        \end{equation*}
        where $s_D:\mathcal{O}_{TM}\to \mathcal{O}_{TM}(D)$ is the canonical inclusion. While everything below extends to this more general case, we will write everything with $h$ holomorphic in order to lighten the notation. 
        \item Note that the family of connections $\mathcal{A}^{\epsilon}$ on $\pi:TM\to M$ induces a relative connection on $TM\times \mathbb{C}\to M\times \mathbb{C}$, with a simple pole at $\epsilon=0$ with residue $v$. In particular, it also induces a connection on the formal bundle $\hat{\pi}:TM\times \widehat{D}\to M\times \widehat{D}$ with the same pole structure, where $\widehat{D}$ is the formal disk. \\
    \end{itemize}
\end{remark}

To have a full-Joyce structure we need additional symmetries and structures on top of a pre-Joyce structure, which we now describe.

\begin{definition} A period structure on $M$ is a tuple $(\Gamma, Z)$ where:
\begin{itemize}
    \item $\Gamma\to M$ is a bundle of full lattices of $TM\to M$. In other words, for each $p\in M$, $\Gamma_p\subset T_pM$ is a discrete, finitely generated abelian group such that $\Gamma_p\otimes_{\mathbb{Z}}\mathbb{C}= T_pM$.
    \item If $\nabla$ is the flat linear connection on $M$ determined by $\Gamma$, then $Z$ is a holomorphic vector field on $M$ such that $\nabla (Z) = \text{Id}_{TM}$.
\end{itemize}
\end{definition}
Note that with respect to a local $\nabla$-flat frame of $TM\to M$, the second condition implies that the components of $Z$ in that frame induce local flat coordinates on $M$, and hence that $\nabla$ is torsion-free. \\

We now have all the ingredients to define a Joyce structure.

\begin{definition}\label{Joyce_def} A Joyce structure on a complex manifold $M$ is a pre-Joyce structure $(\Omega,h)$ on $M$ together with a period structure $(\Gamma,Z)$ such that:
\begin{itemize}
    \item (J1)  $(2\pi \mathrm{i})^{-1}\Omega^{-1}$ takes integral values on the dual bundle of lattices $\Gamma^*\subset T^*M$, where $\Omega^{-1}$ is the dual symplectic form.
    \item (J2) $h$ is invariant under translations by $(2\pi \mathrm{i})\cdot \Gamma \subset TM$.
    \item (J3) If $\nabla$ is the flat connection on $M$ induced by $\Gamma$ and $E$ denotes the $\nabla$-horizontal lift of $Z$, then for any vector field $X$ on $M$ \begin{equation}
        h([Z,X])=[E,h(X)]\,.
    \end{equation}
    \item (J4) If $\iota:TM \to TM$ denotes the involution that acts by $-1$ o the fibers of $\pi:TM\to M$, then $h$ is invariant under the action of $\iota$.
\end{itemize}
\end{definition}

\begin{remark}  \label{JoyceDefRem}
\begin{itemize}
    \item As previously mentioned in the introduction, the notion of Joyce structure is motivated by DT theory. In that context $M$ plays the role of the space of stability conditions of a 3d Calabi-Yau category, and carries a natural period structure $(\Gamma,Z)$ determined by the charge lattice and central charge. Furthermore, the holomorphic symplectic form $\Omega$ of the Joyce structure corresponds to the inverse of the Euler form of $\Gamma$. The additional structure of $h$ satisfying the properties (J1-4) is the conjectural geometric structure encoded by the DT invariants of the 3d Calabi-Yau category. 
    \item Note that condition (J1) implies that the flat torsion-free connection $\nabla$ also satisfies $\nabla \Omega =0$, and hence that $(M,\Omega,\nabla)$ is a flat holomorphic symplectic manifold. Condition (J2) implies that $h$ (and also $\mathcal{A}^{\epsilon}$) descends to a connection on the $(\mathbb{C}^*)^n$-bundle 
    \begin{equation*}
        \pi:TM/(2\pi \mathrm{i})\cdot \Gamma \to M\,.
    \end{equation*}
    Finally, conditions (J3) and (J4) are reflections of properties satisfied by the DT invariants. 
\end{itemize}

\end{remark}

We can use the previous extra structure to refine the form of $h$, as we now explain. We denote by $\mathcal{H}$ the Ehresmann connection induced by $\nabla$ on $\pi:TM \to M$. We can then write
\begin{equation}
    h_X=\mathcal{H}_X+\omega_X
\end{equation}
where $\omega:\pi^*(TM)\to V_{\pi}$. On the other hand, in terms of vector fields on $M$, the flatness condition for $\mathcal{A}^{\epsilon}$ can be written as
\begin{equation}\label{flatLie}
    [\mathcal{A}^{\epsilon}_X,\mathcal{A}^{\epsilon}_{Y}]=\mathcal{A}^{\epsilon}_{[X,Y]}.
\end{equation}
Using \eqref{flatLie} and the fact that $\mathcal{A}^{\epsilon}$ is symplectic,  it can be shown (see \cite{TwistorJoyce}) that we can locally find a holomorphic function $W: TM \to \mathbb{C}$ such that for vector fields $X$ on $M$
\begin{equation}\label{plebform}
    h_X = \mathcal{H}_X+\text{Ham}^{\Omega^{v}}({v_XW})\,.
\end{equation}
Here $v_XW$ denotes the action of $v_X$ on $W$, and $\text{Ham}^{\Omega^{v}}$ denotes the Hamiltonian vector field with respect to $\Omega^{v}$. The function $W$ is called the Pleba\'nski function and it is unique if we impose that on the zero section $M\subset TM$ and vector fields $X$ on $M$ \begin{equation}\label{plebzerosec}
    W|_M=v_XW|_M=0\,.
\end{equation} 
The flatness condition \eqref{flatLie} in terms of $W$ becomes a set of PDE's for $W$, called the Pleba\'nski second heavenly equations. Finally, $W$ must satisfy several symmetries corresponding to (J2)-(J4) (see [Lemma 3.7]\cite{TwistorJoyce}). 

\begin{remark}\label{extremark}
Consider the relative meromorphic connection $\mathcal{A}$ on $p:TM\times \mathbb{C}\to M\times \mathbb{C}$ associated to a Joyce structure. There is a natural way to extend $\mathcal{A}$ from a relative flat connection on $p$ to a full flat meromorphic connection by defining 
\begin{equation}\label{epsilonext}
    \mathcal{A}_{\epsilon \partial_{\epsilon}}=\epsilon\partial_{\epsilon}-\frac{1}{\epsilon}v_Z - \omega_Z\,,
\end{equation}
where $Z$ is the vector field from the period structure. The extension in turn induces a meromorphic flat connection on $\hat{p}:TM\times \widehat{D}\to M\times \widehat{D}$, which is in the class of almost standard connections from \cite[Section 5.2]{KSresurgence}, up to some different sign conventions. In \cite[Section 5.2]{KSresurgence}, almost standard connections are allowed to have a more general form than Joyce structures. Namely, the can have expansions of the form
\begin{equation*}
\begin{split}
    \mathcal{A}_X&=\mathcal{H}_X+\frac{1}{\epsilon}v_X+\sum_{k=0}^{\infty}\epsilon^k\omega_X^{(k)}, \quad X\in \pi^*(TM)\\
    \mathcal{A}_{\epsilon\partial_{\epsilon}}&=\epsilon\partial_{\epsilon}-\frac{1}{\epsilon}v_Z + \sum_{k=0}^{\infty}\epsilon^kv^{(k)}\,,
\end{split}
\end{equation*}
where $\omega_X^{(k)}$ and $v^{(k)}$ are vertical with respect to $\pi:TM\to M$.    
\end{remark}

\subsubsection{Twistor spaces of Joyce structures}\label{twistorJoycesec}

We end the preliminaries with a quick description of the twistor space associated to a Joyce structure, following \cite{TwistorJoyce}. Joyce structures over $M$ encode a complex hyperk\"{a}hler ($\mathbb{C}$-HK) geometry on $TM$ \cite{BriStr},\footnote{Or more precisely, an open subset of $TM$ if $h$ is meromorphic.} and the latter can be encoded in a twistor space $\mathcal{Z}$. A $\mathbb{C}$-HK structure is a holomorphic analog of a usual hyperk\"ahler (HK) structure, where the usual tensors of an HK structure are replaced by holomorphic tensors with respect to an underlying complex structure. More precisely:

\begin{definition}
    A $\mathbb{C}$-HK structure on a complex manifold $N$ is a tuple $(g,I,J,K)$ such that:
    \begin{itemize}
        \item $g$ is a non-degenerate holomorphic section of $\text{Sym}^2(T^*N)\to N$.
        \item $I$, $J$, $K$ are holomorphic sections of $\text{End}(TN)\to N$ satisfying the imaginary quaternion relations: $I^2=J^2=K^2=IJK=-1$.
        \item For $R\in \{I,J,K\}$ and $X,Y\in TN$ we have $g(RX,RY)=g(X,Y)$ and $\nabla(R)=0$, where $\nabla$ is the Levi-Civita connection associated to $g$.
    \end{itemize}
\end{definition}
Given $\mathbb{C}$-HK structure $(N,g,I,J,K)$, one has associated holomorphic symplectic forms on the complex manifold $N$
\begin{equation*}
    \Omega_R(-,-):=g(R-,-), \quad R\in \{I,J,K\}
\end{equation*}
which are the analogue of the K\"{a}hler forms of a usual HK structure. \\

In the case a Joyce structure $(M,\Omega,\Gamma,Z,h)$, the associated $\mathbb{C}$-HK structure on $N=TM$ is described in terms of $(\Omega,h)$. In particular, one has a $\mathbb{C}$-HK structure associated to any pre-Joyce structure, with the one corresponding to a Joyce structure enjoying some extra symmetries. The tensors $(I,J,K)$ are determined by 
\begin{equation*}
\begin{split}
    I\circ h &= i\cdot h, \quad J\circ h= -v, \quad K\circ h = i\cdot v\\
    I\circ v &= -i\cdot v, \quad J\circ v = h, \quad K\circ v = i\cdot h
\end{split}
\end{equation*}
while $g$ is determined by 
\begin{equation*}
g(h_X,v_Y)=\frac{1}{2}\Omega(X,Y), \quad g(h_X,h_Y)=g(v_X,v_Y)=0\,.
\end{equation*}

We now briefly explain how to construct the associated twistor space $\mathcal{Z}$ associated to a Joyce structure. Denoting $N=TM$ and 
fixing $ [\epsilon_0: \epsilon_1]\in \mathbb{P}^1$, consider the distribution on $N$ given by 
\begin{equation*}
H([\epsilon_0: \epsilon_1])=\text{Im}(\epsilon_0v+\epsilon_1h)\subset TN\,.
\end{equation*}
Denoting by $\pi_1:N\times \mathbb{P}^1\to N$ the projection into the first factor and varying $[\epsilon_0: \epsilon_1]$, we obtain a subbundle $H$ of $\pi_1^*(TN)\to N\times \mathbb{P}^1$, and by composing with the canonical inclusion $\pi_1^*(TN)\subset T(N\times \mathbb{P}^1)$ we obtain a distribution $H$ on $N\times \mathbb{P}^1$. The flatness $\mathcal{A}^{\epsilon}$ implies that this distribution is involutive, and hence generates a foliation. The twistor space $\mathcal{Z}$ is then the leaf space of this foliation, and hence comes with a quotient map $q:TM\times \mathbb{P}^1\to \mathcal{Z}$. Furthermore, the projection into the second factor $\pi_2:N\times \mathbb{P}^1\to \mathbb{P}^1$ induces a projection $p:\mathcal{Z}\to \mathbb{P}^1$ satisfying $p\circ q = \pi_2$. We will ignore some technicalities that may arise when defining $\mathcal{Z}$ as a leaf space, and assume that $\mathcal{Z}$ is a complex manifold (see \cite[Remark 4.1]{TwistorJoyce}). This will cause no issue, since all the statements that we will do about $\mathcal{Z}$ can be translated to precise statements about objects on $TM\times \mathbb{P}^1$. For example, a holomorphic function on $\mathcal{Z}$ will be described by a holomorphic function on $TM\times \mathbb{P}^1$ satisfying $\text{Ker}(\mathrm{d}f)=H$. \\

The twistor space $\mathcal{Z}$ comes with an extra structure that encodes the $\mathbb{C}$-HK structure associated to a Joyce structure. This extra structure is an $\mathcal{O}(2)$-twisted family of holomorphic symplectic forms relative to $p:\mathcal{Z}\to \mathbb{P}^1$. In other words, a holomorphic section 
\begin{equation}
    \varpi\in \Gamma(\mathcal{Z}, \Lambda^2V_{p}^*\otimes p^*(\mathcal{O}(2))),
\end{equation}
restricting to a holomorphic symplectic form on the fibers $\mathcal{Z}_{\epsilon}:=p^{-1}(\epsilon)$. 

\begin{definition}
    We call the tuple $(p:\mathcal{Z}\to \mathbb{P}^1, \varpi)$ the twistor space associated to the Joyce structure $(M,\Omega,\Gamma,Z,h)$. 
\end{definition}

\begin{remark}
    Contrary to the twistor space of a usual hyperk\"{a}hler manifold, the twistor space of a $\mathbb{C}$-HK structure does not come equipped with an antiholomorphic involution $\tau:\mathcal{Z}\to \mathcal{Z}$ covering the antipodal map of $\mathbb{P}^1$.
\end{remark}

Via the projection $q:N\times \mathbb{P}^1\to \mathcal{Z}$, it is easy to describe $q^*\varpi$ in terms of the three holomorphic symplectic forms $\Omega_I$, $\Omega_J$, and $\Omega_K$ associated to the $\mathbb{C}$-HK structure. Indeed, in terms of the affine coordinate $\epsilon = \epsilon_1/\epsilon_0$ on $\mathbb{P}^1$ we can write
\begin{equation}\label{twistholsymp}
    q^*\varpi=\left((\Omega_J+\mathrm{i}\Omega_K)+2\mathrm{i}\epsilon\Omega_I+\epsilon^2(\Omega_J-\mathrm{i}\Omega_K)\right)\otimes \partial_{\epsilon}\,.
\end{equation}
The $\mathcal{O}(2)$-twisted $2$-form $q^*\varpi$ is characterized by the conditions that for each fixed $\epsilon$ it is a closed 2-form satisfying 
\begin{equation*}
    \text{Ker}(q^*\varpi|_{TM\times \{\epsilon\}})=\text{Im}(\mathcal{A}^{\epsilon}) \quad \text{for} \quad \epsilon \in \mathbb{C}^{*}, \quad (q^*\varpi)(v_X,v_Y)=\Omega(X,Y)\epsilon^2\otimes \partial_{\epsilon}\,.
\end{equation*}

\section{Formal Classification of Joyce Structures}\label{sec3}

Consider a Joyce structure over $M$ with family of connections $\mathcal{A}^{\epsilon}$ on $\pi:TM \to M$. As we saw in the previous section, this induces a formal relative connection $\mathcal{A}$ on $\widehat{p}:TM \times \widehat{D}\to M\times \widehat{D}$, relative to $\widehat{q}:M\times \widehat{D}\to \widehat{D}$. Following some of the ideas in \cite[Section 5.3]{KSresurgence}, we show that $\mathcal{A}$ can be gauged to a standard form via the action of the formal gauge group  $\widehat{\mathcal{G}}$ introduced in Section \ref{formal_setting}. We then use this result to produce coordinates of the formal twistor space of the Joyce structure.   \\

Taking into account Remark \ref{merrem} about meromorphic Joyce structures, in order for some of the results in this and following sections to apply one must assume that the meromorphic Joyce structure has no poles along the zero section $M\subset TM$. This condition seems to hold in several cases of meromorphic Joyce structures \cite{BridgeFab,BridgelandOsc}.

\subsection{The trivial Joyce structure}\label{trivJoyceSec}

Suppose we are given a holomorphic symplectic manifold $(M,\Omega)$ with a period structure $(\Gamma,Z)$ such that the integrality condition (J1) from Definition \ref{Joyce_def} holds.  

\begin{proposition}\label{trivJoyceStructure} Given $(M,\Omega)$ and $(\Gamma,Z)$ as before, we can define a Joyce structure over $M$ by taking
\begin{equation*}
h=\mathcal{H},
\end{equation*}
where $\mathcal{H}$ is the connection on $\pi:TM\to M$ induced by the linear connection $\nabla$ coming from $\Gamma$.

\begin{proof}
    Since $(M,\Omega,\nabla)$ is a flat holomorphic symplectic manifold (recall Remark \ref{JoyceDefRem}), we can pick holomorphic Darboux coordinates $(Z^i)$ on $M$ which are flat with respect to $\nabla$. Consider the induced coordinates $(Z^i,\theta^i)$ on $TM$. With respect to these coordinates we have
    \begin{equation*}
        \mathcal{H}_{\partial_{Z^i}}=\partial_{Z^i}, \quad v_{\partial_{Z^i}}=\partial_{\theta^i}\,.
    \end{equation*}
    From this one easily checks that 
    \begin{equation*}
    [\mathcal{A}^{\epsilon}_{\partial_{Z^i}},\mathcal{A}^{\epsilon}_{\partial_{Z^j}}]=0,
    \end{equation*}
    from which flatness of $\mathcal{A}^{\epsilon}$ follows. On the other hand, if in the above coordinates we have
    \begin{equation*}
        \Omega=\Omega_{ij}\mathrm{d}Z^i\wedge \mathrm{d}Z^j, \quad \Omega_{ij}\in \mathbb{C}
    \end{equation*}
    then it is easy to check using \eqref{verthol} that 
    \begin{equation*}
    \Omega^v=\Omega_{ij}\mathrm{d}\theta^i\wedge \mathrm{d}\theta^j\,.
    \end{equation*}
    By \cite[Lemma 2.3]{TwistorJoyce}, to check that the flat connection $\mathcal{A}^{\epsilon}$ is symplectic it is enough to show that the unique extension $\Omega^{\epsilon} \in \Omega^2(TM)$ of $\Omega^{v}$ satisfying $\text{Ker}(\Omega^{\epsilon})=\text{Im}(\mathcal{A}^{\epsilon})$ is closed. In the previous coordinates $\Omega^{\epsilon}$ is explicitly given by\footnote{This is the 2-form factor of $\epsilon^{-2}q^*\varpi$, where $q^*\varpi$ is given in \eqref{twistholsymp}.}
    \begin{equation}\label{twisttriv}
    \Omega^{\epsilon}=\epsilon^{-2}\Omega_{ij}\mathrm{d}Z^i\wedge \mathrm{d}Z^j - 2\epsilon^{-1}\Omega_{ij}\mathrm{d}\theta^i\wedge \mathrm{d}Z^j + \Omega_{ij}\mathrm{d}\theta^i\wedge \mathrm{d}\theta^j\,.
    \end{equation}
    Since the coefficients $\Omega_{ij}$ are constant, it immediately follows that $\mathrm{d}\Omega^{\epsilon}=0$ and hence that $\mathcal{A}^{\epsilon}$ is symplectic. \\

    Finally, condition (J1) holds by assumption, while $(J2-4)$ follow trivially from the definition of $\mathcal{H}$. We then conclude that $(M,\Omega)$ and $(\Gamma,Z)$ together with $h=\mathcal{H}$ define a Joyce structure on $M$.    
\end{proof}

The Joyce structure constructed in Proposition \ref{trivJoyceStructure} can be thought as the trivial Joyce structure associated to $(M,\Omega,\Gamma,Z)$. Indeed, comparing to \eqref{plebform} we see that this Joyce structure corresponds to taking $W=0$ as a global Pleba\'nski function. Analogously, in terms of DT theory it can be thought as corresponding to the case where all the DT invariants are $0$.
\end{proposition}
\begin{definition}
    The Joyce structure from Proposition \ref{trivJoyceStructure} associated to a holomorphic symplectic manifold $(M,\Omega)$ with period structure $(\Gamma,Z)$ will be called the trivial Joyce structure on $M$.
\end{definition}
\begin{remark}
\begin{itemize}
\item There is an analogous notion in \cite[Section 5.1]{KSresurgence}, which they call the standard connection associated to trivial stability data. 
\item Note that the proof of Proposition \ref{trivJoyceStructure} shows more generally that given any flat holomorphic symplectic manifold $(M,\Omega,\nabla)$, we can define a pre-Joyce structure by taking $h=\mathcal{H}$, where $\mathcal{H}$ is the Ehresmann connection induced by $\nabla$.
\end{itemize}

\end{remark}
\subsection{Formally gauging to the trivial Joyce structure}

Suppose we are given a Joyce structure over $M$, corresponding to the data $(M,\Omega,\Gamma,Z,h)$. Using the notation from Section \ref{formal_setting}, the associated $\mathbb{C}^{*}$-family of flat symplectic connections \begin{equation*}
    \mathcal{A}^{\epsilon}=h+\frac{1}{\epsilon}v
\end{equation*} induces a meromorphic relative connection $\mathcal{A}$ on $p:TM\times \mathbb{C}\to M\times \mathbb{C}$, and by formalizing the $\mathbb{C}$- directions, a meromorphic relative connection $\mathcal{A}$ on $\hat{p}:TM\times \widehat{D}\to M \times \widehat{D}$. On the other hand, by Section \ref{trivJoyceSec} we have the trivial Joyce structure associated to $(M,\Omega,\Gamma,Z)$, whose $\mathbb{C}^*$-family of flat symplectic connections is given by 
\begin{equation*}
    \mathcal{A}^{\text{st},\epsilon}:=\mathcal{H}+\frac{1}{\epsilon}v\,.
\end{equation*}
As before, we obtain a meromorphic relative connection $\mathcal{A}^{\text{st}}$ on $\hat{p}:TM\times \widehat{D}\to M \times \widehat{D}$.\\

As explained in Section \ref{formal_setting}, we have a group $\widehat{\mathcal{G}}$ of formal gauge transformations of $\hat{p}$ and a corresponding action on connections (recall \eqref{formgaugeconn}). Following some of the ideas in \cite[Section 5.3]{KSresurgence}
we will show that one can find a formal gauge transformation $\hat{g}\in \widehat{\mathcal{G}}$ given by $\dot{g}\in \text{Lie}(\widehat{\mathcal{G}})$ such that 
\begin{equation}\label{maineq}
    e^{\mathcal{L}_{\dot{g}}}\mathcal{A}^{\text{st}}=\mathcal{A}\,.
\end{equation}

\begin{theorem}\label{mt1}
    Given a Joyce structure over $M$, there exists a unique $\dot{g}\in \text{Lie}(\widehat{\mathcal{G}})$ solving equation \eqref{maineq} and such that $\dot{g}|_{M}=0$.
\end{theorem}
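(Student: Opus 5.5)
The plan is to construct $\dot{g}=\sum_{k\geq 1}\dot{g}_k\epsilon^k$ order by order in $\epsilon$. At each order the equation reduces to a linear "fibrewise $\mathrm{d}$-equation" along the fibres of $\pi$. Flatness (Lemma \ref{flatgaugelemma}) supplies the integrability condition, and the normalization $\dot{g}|_M=0$ fixes the constant of integration.

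\textbf{Setup in coordinates.} Work locally in $\nabla$-flat Darboux coordinates $(Z^i)$ on $M$ with induced coordinates $(Z^i,\theta^i)$ on $TM$, as in the proof of Proposition \ref{trivJoyceStructure}. There
\begin{equation*}
\mathcal{A}^{\text{st}}_i=\partial_{Z^i}+\epsilon^{-1}\partial_{\theta^i},\qquad \mathcal{A}_i=\partial_{Z^i}+\omega_i+\epsilon^{-1}\partial_{\theta^i},
\end{equation*}
with $\omega_i=\omega_{\partial_{Z^i}}$ vertical and independent of $\epsilon$. The key observation is that the only term of $e^{\mathcal{L}_{\dot g}}\mathcal{A}^{\text{st}}_i$ that involves $\dot{g}_{k+1}$ at order $\epsilon^k$ is
\begin{equation*}
\epsilon^{-1}[\epsilon^{k+1}\dot g_{k+1},\partial_{\theta^i}]=-\epsilon^k\,\partial_{\theta^i}\dot g_{k+1}.
\end{equation*}
Here $\partial_{\theta^i}$ acts componentwise on the vertical field $\dot g_{k+1}=\dot g^j_{k+1}\partial_{\theta^j}$. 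All other contributions at order $\epsilon^k$ involve only $\dot g_1,\dots,\dot g_k$, since $\dot g$ has no $\epsilon^0$ term. Intrinsically, $\partial_{\theta^i}$ is $\mathcal{L}_{v_X}$ for $\nabla$-flat $X$, and flat Darboux charts differ by affine maps. So the local construction is chart-independent, and local uniqueness lets the pieces glue to a global $\dot g$.

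\textbf{Induction.} Suppose $\dot g^{(k)}=\sum_{m=1}^k\dot g_m\epsilon^m$ has been found with $\dot g_m|_M=0$ and $\mathcal{B}:=e^{\mathcal{L}_{\dot g^{(k)}}}\mathcal{A}^{\text{st}}$ agreeing with $\mathcal{A}$ up to order $\epsilon^{k-1}$. (For $k=0$, $\dot g^{(0)}=0$ and the mismatch is $\omega$ at order $\epsilon^0$.) Write $\mathcal{A}_i-\mathcal{B}_i=\epsilon^k\delta_i+O(\epsilon^{k+1})$ with $\delta_i$ vertical. By Lemma \ref{flatgaugelemma}, $\mathcal{B}$ is flat, and $\mathcal{A}$ is flat by assumption.

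Subtract $[\mathcal{B}_i,\mathcal{B}_j]=0$ from $[\mathcal{A}_i,\mathcal{A}_j]=0$ and extract the lowest order, $\epsilon^{k-1}$. This gives
\begin{equation*}
[\partial_{\theta^i},\delta_j]-[\partial_{\theta^j},\delta_i]=0, \quad\text{i.e.}\quad \partial_{\theta^i}\delta^l_j=\partial_{\theta^j}\delta^l_i \text{ for all } l.
\end{equation*}
Since each fibre $T_pM$ is a vector space, hence star-shaped, this closedness lets us integrate. Define
\begin{equation*}
\dot g_{k+1}(Z,\theta)=-\int_0^1\theta^i\,\delta_i(Z,t\theta)\,\mathrm{d}t .
\end{equation*}
This is the unique vertical field with $-\partial_{\theta^i}\dot g_{k+1}=\delta_i$ and $\dot g_{k+1}|_M=0$. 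Replacing $\dot g^{(k)}$ by $\dot g^{(k+1)}=\dot g^{(k)}+\epsilon^{k+1}\dot g_{k+1}$ changes $e^{\mathcal{L}_{\dot g}}\mathcal{A}^{\text{st}}$ at order $\epsilon^k$ exactly by $-\partial_{\theta^i}\dot g_{k+1}=\delta_i$, and otherwise only at higher orders. This closes the induction, and the limit $\dot g$ solves \eqref{maineq} because the coefficient of each power of $\epsilon$ stabilizes.

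\textbf{Uniqueness.} Suppose $\dot g$ and $\dot g'$ both solve \eqref{maineq}, vanish on $M$, and first differ at $\epsilon^{k+1}$. Comparing the order-$\epsilon^k$ coefficients of $e^{\mathcal{L}_{\dot g}}\mathcal{A}^{\text{st}}$ and $e^{\mathcal{L}_{\dot g'}}\mathcal{A}^{\text{st}}$ gives $\partial_{\theta^i}(\dot g_{k+1}-\dot g'_{k+1})=0$ for all $i$. Hence the difference is constant along fibres, and so zero since it vanishes on $M$.

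\textbf{Expected obstacle.} The main difficulty is the compatibility step. One must check carefully that the order-$\epsilon^{k-1}$ part of the difference of the two flatness equations really reduces to the fibrewise closedness of $\delta$. Lower-order cross terms cancel because $\mathcal{A}$ and $\mathcal{B}$ agree up to $\epsilon^{k-1}$, and $[\partial_{Z^i}+\omega_i,\cdot]$ only enters at order $\geq\epsilon^k$. A secondary point is that, for meromorphic $h$, this requires holomorphy of $\omega$ on a fibrewise star-shaped neighbourhood of the zero section. This is exactly the no-poles-along-$M$ assumption; in that case the statement should be read on such a neighbourhood.
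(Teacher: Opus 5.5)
Your proposal is correct and follows the same strategy as the paper. You correct order by order in $\epsilon$, use flatness of both connections (via Lemma \ref{flatgaugelemma}) to get fibrewise closedness of the order-$\epsilon^k$ mismatch, integrate along the fibres normalized on the zero section, and glue by local uniqueness.

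The only difference is bookkeeping. The paper applies successive factors $e^{\mathcal{L}_{\epsilon^{k}\dot g_k}}$ and then reassembles the infinite product into a single logarithm via BCH, noting that brackets of fields vanishing on $M$ still vanish there. You instead correct the exponent directly, using that $\dot g_{k+1}$ enters the $\epsilon^k$-coefficient only through $-\partial_{\theta^i}\dot g_{k+1}$ (the triangular structure of Proposition \ref{geq}). This avoids BCH and makes uniqueness of $\dot g$ itself immediate, whereas the paper's argument gives uniqueness of the individual factors.
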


\begin{proof}

We pick local coordinates $(Z^i)$ for $M$, and consider the induced coordinates $(Z^i,\theta^i)$ on $TM$. We  denote $\mathcal{A}_{i}:=\mathcal{A}_{\partial_{Z^i}}$ and $v_i:=v_{\partial_{Z^i}}$. We will build $\dot{g}$ by guaranteeing that \eqref{maineq} holds order by order in $\epsilon$. For $\dot{g}_1 \in \Gamma(TM,V_{\pi})$ we have the equation
\begin{equation*}
    e^{\mathcal{L}_{\epsilon\dot{g}_1}}\mathcal{A}^{\text{st}}_i=\mathcal{A}_i^{\text{st}}+[\dot{g}_1,v_i]+\mathcal{O}(\epsilon)\,,
\end{equation*}
where $\mathcal{O}(\epsilon)$ denotes terms of order $\epsilon$ or higher. If $\omega_i:=\mathcal{A}_i-\mathcal{A}_i^{\text{st}}$, then we would like to find $\dot{g}_1$ such that 
\begin{equation}\label{first}
    [\dot{g_1},v_i]=\omega_i, \quad i=1,...,\text{dim}_{\mathbb{C}}(M) \,.
\end{equation}
If we write in components $\dot{g}_1=\dot{g}_1^k\partial_{\theta^k}$ and $\omega_i=\omega_i^k\partial_{\theta^{k}}$, the using that $v_{i}=\partial_{\theta^i}$ the equation \eqref{first} reduces to
\begin{equation}\label{g1_eq}
    -\partial_{\theta^i}\dot{g}_1^k\partial_{\theta^k}=\omega_i^k\partial_{\theta^k}\,.
\end{equation}
The flatness of the Joyce structure implies that 
\begin{equation*}
    [\omega_i,v_j]+[v_i,\omega_j]=0 \implies \partial_{\theta^j}\omega_i^k=\partial_{\theta^i}\omega_j^k\,.
\end{equation*} Hence, a solution to \eqref{g1_eq} can be found by the (vector valued) path integral along fibers of $\pi: TM\to M$
\begin{equation}\label{g1_sol}
    \dot{g}_1(Z,\theta)=-\int_{\gamma_{(Z,\theta)}}\omega_i(Z,\varphi)\mathrm{d}\varphi^i
\end{equation}
where $\gamma_{(Z,\theta)}$ is a path from $(Z,0)$ to $(Z,\theta)$. The solution is clearly holomorphic in $Z$ and is independent of deformations of the path in the fiber. It vanishes on the zero section by construction and it is clear that $\dot{g}_1$ is unique with this property.  \\

By Lemma \ref{flatgaugelemma}, we now have the flat connection
\begin{equation*}
    \mathcal{A}_i^{(1)}:=e^{\mathcal{L}_{\epsilon\dot{g}_1}}\mathcal{A}^{\text{st}}_i=\mathcal{A}_i+\epsilon\omega_i^{(1)}+\mathcal{O}(\epsilon^2)\,,
\end{equation*}
where $\omega_i^{(1)}$ is a vertical vector field with respect to $\pi:TM \to M$. 
Flatness of $\mathcal{A}$ and $\mathcal{A}^{(1)}$ implies that
\begin{equation}\label{g2_flat}
    [v_j,\omega_i^{(1)}]=[v_i,\omega_j^{(1)}] \implies\partial_{\theta^j}\omega_i^{(1)}=\partial_{\theta^i}\omega_j^{(1)}\,.
\end{equation}
We now want to find $\dot{g}_2\in \Gamma(TM,V_{\pi})$ such that
\begin{equation*}
    e^{\mathcal{L}_{\epsilon^2\dot{g}_2}}\mathcal{A}_i^{(1)}=\mathcal{A}_i+\mathcal{O}(\epsilon^2)\,.
\end{equation*}
Since
\begin{equation*}
    \mathcal{A}_i^{(2)}:=e^{\mathcal{L}_{\epsilon^2\dot{g}_2}}\mathcal{A}_i^{(1)}=\mathcal{A}_i+\epsilon\omega_i^{(1)}+\epsilon[\dot{g}_2,v_i]+\mathcal{O}(\epsilon^2)
\end{equation*}
we want to find $\dot{g}_2$ solving the system 
\begin{equation*}
    [v_i,\dot{g}_2]=\omega_i^{(1)}\,, \quad i=1,...,\text{dim}_{\mathbb{C}}(M). 
\end{equation*}
As before, equation \eqref{g2_flat} implies that we can solve for $\dot{g}_2$ by using fiber-wise path integrals. We can choose it to vanish along the zero section by the choice of integration path as in \eqref{g1_sol}, and $\dot{g}_2$ is again unique with these properties. \\

In general, assume that we have the flat connection 
\begin{equation*}
    \mathcal{A}_i^{(k)}=\mathcal{A}_i+\epsilon^k\omega_i^{(k)}+\mathcal{O}(\epsilon^{k+1})\,.
\end{equation*}
where $\omega_i^{(k)}$ is a vertical vector field. Flatness of $\mathcal{A}$ and $\mathcal{A}^{(k)}$ implies
\begin{equation*}
    [v_j,\omega_i^{(k)}]=[v_i,\omega_j^{(k)}] \implies\partial_{\theta^j}\omega_i^{(k)}=\partial_{\theta^i}\omega_j^{(k)}\,,
\end{equation*}
while
\begin{equation*}
    e^{\mathcal{L}_{\epsilon^{k+1}\dot{g}_{k+1}}}\mathcal{A}_i^{(k)}=\mathcal{A}_i+\epsilon^{k}\omega_i^{(k)}+\epsilon^k[\dot{g}_{k+1},v_i]+\mathcal{O}(\epsilon^{k+1})\,.
\end{equation*}
Hence, we want to find $\dot{g}_{k+1}$  such that
\begin{equation*}
    [v_i,\dot{g}_{k+1}]=\omega_i^{(k)}, \quad i =1,2...,\text{dim}_{\mathbb{C}}(M)\,,
\end{equation*} and we can solve as before and obtain $\dot{g}_{k+1}$ via the path integral

\begin{equation}\label{piformula}
    \dot{g}_{k+1}(Z,\theta)=\int_{\gamma(Z,\theta)}\omega_i^{(k)}(Z,\varphi)\mathrm{d}\varphi^i
\end{equation}
vanishing on the zero section, and $\dot{g}_{k+1}$ is unique with this property. \\

While we worked in local coordinates $(Z^i)$ on $M$ (and the induced $(Z^i,\theta^i)$ on $TM$) we can use the uniqueness of the local $\dot{g}_{k}$ to glue them together into a global $\dot{g}_k$.\\

Applying all the previous gauge transformations successively we obtain
\begin{equation*}
    \left(\prod_{k=1}^{\infty}e^{\mathcal{L}_{\epsilon^k\dot{g}_k}}\right)\mathcal{A}^{\text{st}}=\mathcal{A}\,,
\end{equation*}
where each new factor is applied to the left of what resulted in the action of the previous factor. The way to make sense of the above equation is as follows: for any truncation of the powers of $\epsilon^k$ on both sides of the equation, the equality holds after applying sufficiently many products of gauge transformations.  Furthermore, the infinite product can be interpreted as a gauge transformation by noticing that for any $n>0$, the terms of order $\epsilon^k$ for $k\leq n$ obtained via the Baker-Campbell-Hausdorff (BCH) formula stabilize for sufficiently many products. Namely, we can write 

\begin{equation*}
    \left(\prod_{k=1}^{\infty}e^{\mathcal{L}_{\epsilon^k\dot{g}_k}}\right)=e^{\mathcal{L}_{\dot{g}}}
\end{equation*}
where in the epsilon expansion of $\dot{g}$, the $\epsilon^k$-term is obtained by successively applying the BCH formula on the products until the $\epsilon^k$-term stabilizes.\\ 

The fact that the resulting $\dot{g}$ vanishes on the zero section of $\pi:TM\to M$ follows from the fact that vertical vector fields vanishing along the zero section are closed under linear combinations and Lie brackets, together with the BCH formula. 

\end{proof}

\begin{remark}
    Note that while the boundary condition $\dot{g}|_M=0$ is natural, one might be interested in a $\dot{g}$ gauging $\mathcal{A}^{\text{st}}$ to $\mathcal{A}$ but not vanishing on the zero section. We will see an instance of this in the examples of Section \ref{exsec}.
\end{remark}
It is useful to explicitly write the equations that must hold for any $\dot{g}\in \text{Lie}(\widehat{\mathcal{G}})$ satisfying \eqref{maineq}, order by order in $\epsilon$.  We collect the equations in the following proposition, using the notation from the proof of Theorem \ref{mt1}.

\begin{proposition}\label{geq}
    Consider $\dot{g}=\sum_{k=1}^{\infty}\epsilon^k\dot{g}_k\in \text{Lie}(\widehat{\mathcal{G}})$. Then $\dot{g}$ satisfies \eqref{maineq} if and only if, with respect to flat Darboux coordinates $(Z^i)$ on $M$
    \begin{equation}\label{firstrec}
    [\dot{g}_1,v_i]=\omega_i, \quad i=1,2,...,\text{dim}_{\mathbb{C}}(M)
\end{equation}
and for each $l>0$ the following equation holds for $i=1,2,...,\text{dim}_{\mathbb{C}}(M)$

\begin{equation}\label{receq}
0=\sum_{r=1}^{l+1}\frac{1}{r!}\sum_{k_1+...+k_r=l+1}[\dot{g}_{k_1},...,[\dot{g}_{k_r},v_i]...] + \sum_{r=1}^{l}\frac{1}{r!}\sum_{k_1+...+k_r=l}[\dot{g}_{k_1},...,[\dot{g}_{k_r},\partial_{Z^i}],...]    
\end{equation}

\end{proposition}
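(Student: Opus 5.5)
We expand $e^{\mathcal{L}_{\dot{g}}}\mathcal{A}^{\text{st}}_i$ as a formal series in $\epsilon$ and compare it coefficient by coefficient with $\mathcal{A}_i=\partial_{Z^i}+\omega_i+\epsilon^{-1}v_i$. We work in flat Darboux coordinates $(Z^i)$ on $M$ and the induced coordinates $(Z^i,\theta^i)$ on $TM$. In these coordinates $\mathcal{H}_{\partial_{Z^i}}=\partial_{Z^i}$ and $v_i=\partial_{\theta^i}$, so that $\mathcal{A}^{\text{st}}_i=\partial_{Z^i}+\epsilon^{-1}v_i$. Since a connection is determined by its values on the pullback sections $\partial_{Z^i}$ (by $C^\infty$/holomorphic linearity over $\pi^*\mathcal{O}_M[[\epsilon]]$), \eqref{maineq} is equivalent to the equalities $e^{\mathcal{L}_{\dot{g}}}\mathcal{A}^{\text{st}}_i=\mathcal{A}_i$ for all $i$.

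By \eqref{formgaugeconn} we have $e^{\mathcal{L}_{\dot g}}Y=\sum_{r\ge0}\frac{1}{r!}\mathcal{L}_{\dot g}^rY$, and $\mathcal{L}_{\dot g}Y=[\dot g,Y]$. Inserting $\dot g=\sum_k\epsilon^k\dot g_k$ and using bilinearity of the bracket gives
\begin{equation*}
\mathcal{L}_{\dot g}^r Y=\sum_{k_1,\dots,k_r\ge1}\epsilon^{k_1+\dots+k_r}[\dot g_{k_1},[\dots,[\dot g_{k_r},Y]\dots]].
\end{equation*}
For fixed total degree only finitely many terms contribute, because each $k_j\ge1$ and hence $r\le k_1+\dots+k_r$.

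We apply this with $Y=\epsilon^{-1}v_i$ and $Y=\partial_{Z^i}$ and collect the coefficient of $\epsilon^{l}$ for $l\ge -1$.

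\textbf{Order $\epsilon^{-1}$.} Only the $r=0$ term contributes, giving $v_i$ on both sides. This holds automatically.

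\textbf{Order $\epsilon^{0}$.} From $Y=\partial_{Z^i}$ only $r=0$ contributes, giving $\partial_{Z^i}$. From $Y=\epsilon^{-1}v_i$ only $r=1$, $k_1=1$ contributes, giving $[\dot g_1,v_i]$. Comparing with $\partial_{Z^i}+\omega_i$ yields exactly \eqref{firstrec}.

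\textbf{Order $\epsilon^{l}$ with $l>0$.} The right-hand side $\mathcal{A}_i$ has no such term. On the left, the $v_i$-part contributes all tuples with $k_1+\dots+k_r=l+1$, where $1\le r\le l+1$. The $\partial_{Z^i}$-part contributes all tuples with $k_1+\dots+k_r=l$, where $1\le r\le l$; the term $r=0$ does not appear since $l>0$. Setting the sum to zero gives \eqref{receq}.

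Since two formal series in $\epsilon$ are equal if and only if all their coefficients agree, the system \eqref{firstrec}, \eqref{receq} is equivalent to \eqref{maineq}. This gives both directions at once.

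The only step requiring care is justifying the rearrangement of the double sum over $r$ and $(k_1,\dots,k_r)$ into a sum over total degree. This is legitimate because the contributions to each power of $\epsilon$ are finite, as noted above. We also use that $\epsilon$ commutes with the Lie derivative, because the vector fields involved do not differentiate in the $\epsilon$ direction. Beyond this the argument is pure bookkeeping, and I do not anticipate a genuine obstacle.
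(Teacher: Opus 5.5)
Your proposal is correct and takes the same route as the paper. Both expand $e^{\mathcal{L}_{\dot g}}\mathcal{A}^{\text{st}}_i$ in powers of $\epsilon$ using $\mathcal{H}_{\partial_{Z^i}}=\partial_{Z^i}$ in flat coordinates, and match coefficients with $\mathcal{A}_i=\partial_{Z^i}+\omega_i+\epsilon^{-1}v_i$. Your explicit bookkeeping at orders $\epsilon^{-1}$, $\epsilon^{0}$ and $\epsilon^{l}$ ($l>0$) simply spells out what the paper's one-line proof leaves implicit.
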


\begin{proof}
    This follows by simply expanding \eqref{maineq} as a formal series in $\epsilon$, and collecting terms with the same power of $\epsilon$, together with the fact that $\mathcal{H}_{\partial_{Z^i}}=\partial_{Z^i}$ for $\nabla$-flat coordinates.
\end{proof}
For example, for $l=1,2$ the equation \eqref{receq} reduces to
\begin{equation}\label{firstreceq}
\begin{split}
&[\dot{g}_2,v_i]+[\dot{g}_1,\partial_{Z^i}]+\frac{1}{2}[\dot{g}_1,[\dot{g}_1,v_i]]=0,\\
&[\dot{g}_3,v_i]+[\dot{g}_2,\partial_{Z^i}]+\frac{1}{2}\left([\dot{g}_1,[\dot{g}_2,v_i]]+ [\dot{g}_2,[\dot{g}_1,v_i]]+[\dot{g}_1,[\dot{g}_1,\partial_{Z^i}]]\right)+\frac{1}{6}[\dot{g}_1,[\dot{g}_1,[\dot{g}_1,v_i]]]=0\,.\\
\end{split}
\end{equation}
\begin{remark}\label{g1hamrem}
    When we write $\mathcal{A}^{\epsilon}$ in terms of the Pleba\'nski function $W$ as in \eqref{plebform}, we see that \eqref{firstrec} can be written as
    \begin{equation*}
        -[v_i,\dot{g}_1] = \text{Ham}^{\Omega^v}(v_iW)\,.
    \end{equation*}
    Choosing Darboux coordinates $(Z^i)$ on $M$ and the induced coordinates $(Z^i,\theta^i)$ on $TM$ we can write the previous equation as 
    \begin{equation*}
        -\partial_{\theta^i}(\dot{g}_1^k)\partial_{\theta^k}=\Omega^{jk}\frac{\partial^2W}{\partial \theta^i\partial \theta^j}\partial_{\theta^k}
    \end{equation*}
    where $\Omega^{ij}$ are the coefficients of $\Omega^{-1}$. Hence it is natural to take
    \begin{equation}\label{g1pleb}
        \dot{g}_1=-\text{Ham}^{\Omega^v}(W)\,.
    \end{equation}
    If the Pleba\'nski function $W$ is chosen to satisfy \eqref{plebzerosec}, then the resulting $\dot{g}_1$ satisfies $\dot{g}_1|_M=0$. This and the equations in \eqref{receq} show that $W$ can be used to determine all subsequent $\dot{g}_k$, and these are unique if we impose that $\dot{g}_k|_M=0$.
\end{remark}
\subsection{Formal twistor coordinates for Joyce structures}\label{formaldarbouxcoordsec}

Recall from Section \ref{twistorJoycesec} that we have a twistor space $(p:\mathcal{Z}\to \mathbb{P}^1, \varpi)$ associated to a Joyce structure over $M$. Since $\varpi$ restricts to a holomorphic symplectic form on each fiber $\mathcal{Z}_{\epsilon}=p^{-1}(\epsilon)$, it is a natural question to look for holomorphic functions on $\mathcal{Z}$ that restrict to Darboux coordinates for $\varpi^{\epsilon}:=\varpi|_{\mathcal{Z}_{\epsilon}}$.\\

If $n = \text{dim}_{\mathbb{C}}(M)$, then   $\text{dim}_{\mathbb{C}}(\mathcal{Z})=n+1$ and $\text{dim}_{\mathbb{C}}(\mathcal{Z}_{\epsilon})=n$. We therefore want to look for $n$ functions on $\mathcal{Z}$ that restrict to Darboux coordinates of $\varpi^{\epsilon}$.\\

In the case of the trivial Joyce structure for Section \ref{trivJoyceSec}, there are natural candidates for this. Indeed, if $(Z^i)$ are $\nabla$-flat Darboux coordinates on $U\subset M$, and $(Z^i,\theta^i)$ the induced coordinates on $\pi^{-1}(U)\subset TM$, we can define the following functions\footnote{It is also natural to define $x^{i,\text{st}}$ by $x^{i,\text{st}}=\epsilon \theta^i-Z^i$ in order to have an expression in $\mathcal{O}_{TM}[[\epsilon]]$ instead of $\epsilon^{-1}\cdot \mathcal{O}_{TM}[[\epsilon]]$.} on $\pi^{-1}(U)\times \mathbb{C}^{*}\subset TM\times \mathbb{P}^1$:
\begin{equation}\label{stcoords}
    x^{i,\text{st}}:=\theta^i-\frac{Z^i}{\epsilon}, \quad i=1,..,n\,.
\end{equation}
These functions satisfy
\begin{equation}
    \mathcal{A}^{\text{st}}_{\partial_{Z^j}}(x^{i,\text{st}})=0, \quad i,j=1,...,n\,.
\end{equation}
and hence descend to the twistor space $\mathcal{Z}^{\text{st}}$. Furthermore, an easy computation shows that (see \eqref{twisttriv} and the footnote) for a fixed $\epsilon \in \mathbb{C}^{*}$
\begin{equation}\label{trivtwistdarboux}
    q^*\varpi^{\text{st}}|_{TM\times \{\epsilon\}}=\epsilon^{2}\Omega_{ij}\mathrm{d}x^{i,\text{st}}\wedge \mathrm{d}x^{j,\text{st}}
\end{equation}
where $\Omega_{ij}\in \mathbb{C}$ are the coefficients of $\Omega$ in the coordinates $(Z^i)$. Hence, the functions $x^{i,\text{st}}$ descend to Darboux coordinates for $\varpi^{\text{st},\epsilon}$.\\

We now fix a Joyce structure $(M,\Omega,\Gamma,Z,h)$ with corresponding family of connections $\mathcal{A}^{\epsilon}$, and also consider the trivial Joyce structure $(M,\Omega,\Gamma,Z,\mathcal{H})$ with family of connections $\mathcal{A}^{\text{st},\epsilon}$. We consider the corresponding formal relative connections $\mathcal{A}$ and $\mathcal{A}^{\text{st}}$ on $\hat{p}:TM\times \widehat{D}\to M\times \widehat{D}$, and any $\dot{g}\in \text{Lie}(\widehat{\mathcal{G}})$ satisfying 
\begin{equation*}
e^{\mathcal{L}_{\dot{g}}}\mathcal{A}^{\text{st}}=\mathcal{A}\,.
\end{equation*}
We also fix coordinates $(Z^i, \theta^i)$ as before on $\pi^{-1}(U)\subset TM$. 

\begin{proposition}\label{flatcoordsprop}
    Under the previous assumptions, define $x^i\in \epsilon^{-1}\cdot \mathcal{O}_{TM}(\pi^{-1}(U))[[\epsilon]]$ by 
    \begin{equation*}
        x^i:=e^{\dot{g}}x^{i,\text{st}}=\sum_{k=0}^{\infty}\frac{\dot{g}^k(x^{i,\text{st}})}{k!}\,.
    \end{equation*}
    Then the $x^i$ satisfy 
\begin{equation*}
    \mathcal{A}_{\partial_{Z^j}}(x^i)=0\, \quad \text{for} \quad  i,j=1,...,n.
\end{equation*}
\end{proposition}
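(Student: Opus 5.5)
The plan is to use the conjugation identity relating the action of $e^{\dot{g}}$ on functions with the action $e^{\mathcal{L}_{\dot{g}}}$ on vector fields. For any (formal) vector field $X$ and function $f$ in $\epsilon^{-1}\cdot\mathcal{O}_{TM}(\pi^{-1}(U))[[\epsilon]]$, we expect
\begin{equation*}
    e^{\dot{g}}\left(X(f)\right)=\left(e^{\mathcal{L}_{\dot{g}}}X\right)\left(e^{\dot{g}}f\right).
\end{equation*}
This is the formal analogue of the fact that pushing forward a vector field by a diffeomorphism and pulling back functions intertwine. Granting it, we take $X=\mathcal{A}^{\text{st}}_{\partial_{Z^j}}=\partial_{Z^j}+\epsilon^{-1}\partial_{\theta^j}$ and $f=x^{i,\text{st}}$. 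The left side vanishes because $\mathcal{A}^{\text{st}}_{\partial_{Z^j}}(x^{i,\text{st}})=0$. By \eqref{maineq}, $e^{\mathcal{L}_{\dot{g}}}\mathcal{A}^{\text{st}}_{\partial_{Z^j}}=\mathcal{A}_{\partial_{Z^j}}$, so the right side is $\mathcal{A}_{\partial_{Z^j}}(x^i)$, which gives the claim.

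To prove the identity, we write $D:=\dot{g}$ acting as a derivation on functions, and $\mathrm{ad}_D(X)=[D,X]=\mathcal{L}_{\dot{g}}X$. First we check the operator identity $D^m\circ X=\sum_{k=0}^{m}\binom{m}{k}(\mathrm{ad}_D^{k}X)\circ D^{m-k}$ by induction on $m$, using $D\circ Y=[D,Y]+Y\circ D$. Dividing by $m!$ and summing gives
\begin{equation*}
    e^{D}\circ X=\sum_{k,l\geq 0}\frac{1}{k!\,l!}(\mathrm{ad}_D^kX)\circ D^l=(e^{\mathrm{ad}_D}X)\circ e^{D}.
\end{equation*}
This is the same binomial reorganization used in Lemma \ref{flatgaugelemma}. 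Applied to $f$, it is exactly the identity above.

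The main obstacle is making sure that the formal manipulations converge $\epsilon$-adically. The function $x^{i,\text{st}}$ has an $\epsilon^{-1}$ pole, and $\mathcal{A}^{\text{st}}$ has one as well. However, $\dot{g}=\sum_{k\geq1}\epsilon^k\dot{g}_k$ raises the $\epsilon$-order by at least one each time it is applied, both on functions and through $\mathrm{ad}$. So for each fixed power $\epsilon^N$, only finitely many terms of each double sum contribute, and the rearrangements are justified coefficientwise.

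We must also note the following. In \eqref{formgaugeconn}, $e^{\mathcal{L}_{\dot{g}}}$ is defined on the vector field $\mathcal{A}_{\widetilde{X}}$ for a pullback extension. For the coordinate field $\partial_{Z^j}$ this is literally the vector field $\mathcal{A}^{\text{st}}_{\partial_{Z^j}}$ on $\pi^{-1}(U)$, so the identification $e^{\mathrm{ad}_D}\mathcal{A}^{\text{st}}_{\partial_{Z^j}}=\mathcal{A}_{\partial_{Z^j}}$ holds directly. Finally, since $\dot{g}$ is vertical, $e^{\dot{g}}x^{i,\text{st}}=\theta^i-\epsilon^{-1}Z^i+\sum_{k\geq1}\dot{g}^k(\theta^i)/k!$. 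This confirms that $x^i$ lies in $\epsilon^{-1}\cdot\mathcal{O}_{TM}(\pi^{-1}(U))[[\epsilon]]$, as stated.
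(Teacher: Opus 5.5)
Your proposal is correct and takes essentially the paper's route: both arguments establish the intertwining identity $e^{\dot{g}}\circ \mathcal{A}^{\text{st}}_{\partial_{Z^j}}=(e^{\mathcal{L}_{\dot{g}}}\mathcal{A}^{\text{st}}_{\partial_{Z^j}})\circ e^{\dot{g}}$ by a binomial reorganization, and then use $\mathcal{A}^{\text{st}}_{\partial_{Z^j}}(x^{i,\text{st}})=0$. The only difference is bookkeeping. The paper expands $\mathcal{L}_{\dot{g}}^k(\mathcal{A}^{\text{st}}_j)=\sum_{l}(-1)^l\binom{k}{l}\dot{g}^{k-l}\mathcal{A}^{\text{st}}_j\dot{g}^l$ and kills the terms with $p_2>0$ via $\sum_{k}(-1)^k\binom{p_2}{k}=0$, whereas you expand $\dot{g}^m\circ X$ directly, which avoids the alternating sum, and you also make the $\epsilon$-adic finiteness explicit.
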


\begin{proof}
    We let $\mathcal{A}_j:=\mathcal{A}_{\partial_{Z^j}}$.
    For ease of notation, denote by $B$ the operator $B(\mathcal{A}_j^{\text{st}})=[\dot{g},\mathcal{A}_j^{\text{st}}]$. Then 
    \begin{equation*}
        \mathcal{A}=e^{\mathcal{L}_{\dot{g}}}\mathcal{A}^{\text{st}}_j=\sum_{k=0}^{\infty}\frac{B^k(\mathcal{A}^{\text{st}}_j)}{k!}\,.
    \end{equation*}
    One can easily check by induction that 
    \begin{equation*}
        B^k(\mathcal{A}_j^{\text{st}})=\sum_{l=0}^{k}(-1)^{l}\binom{k}{l}\dot{g}^{k-l}\mathcal{A}^{\text{st}}_j\dot{g}^l\,.
    \end{equation*}
    If we group the terms of $e^{\mathcal{L}_{\dot{g}}}\mathcal{A}_j^{\text{st}}(e^{\dot{g}}x^{i,\text{st}})$ with the same factor $\dot{g}^{p_1}\mathcal{A}^{\text{st}}_j\dot{g}^{p_2}$ we obtain

    \begin{equation*}
        \left(\sum_{k=p_1}^{p_1+p_2}\frac{1}{k!}\frac{(-1)^{k-p_1}}{(p_2+p_1-k)!}\binom{k}{k-p_1}\right)\dot{g}^{p_1}\mathcal{A}^{\text{st}}_j\dot{g}^{p_2}x^{i,\text{st}}\,,
    \end{equation*}
    Which can be rewritten as
    \begin{equation*}
        \frac{1}{p_1!p_2!}\left(\sum_{k=0}^{p_2}(-1)^k\binom{p_2}{k}\right)\dot{g}_1^{p_1}\mathcal{A}_j^{\text{st}}\dot{g}_2^{p_2}x^{i,\text{st}}\,.
    \end{equation*}
    In the cases where $p_2=0$ the above vanishes because $\mathcal{A}^{\text{st}}_jx_i^{\text{st}}=0$, while for the cases $p_2>0$ we use
    \begin{equation*}
        \sum_{k=0}^{p_2}(-1)^k\binom{p_2}{k}=0
    \end{equation*}
    which follows from setting $a=1$, $b=-1$ in 
\begin{equation*}
    (a+b)^{p_2}=\sum_{k=0}^{p_2}\binom{p_2}{k}a^{p_2-k}b^k\,.
\end{equation*}
The desired result then follows.
\end{proof}
While we will not discuss the notion of formal twistor space in detail,  the result of the previous proposition can be used to produce several statements related to this notion, which we summarize in the remark below. 
\begin{remark}
\begin{itemize}
\item The statement that $\mathcal{A}(x^i)=0$ can be interpreted as the fact that the functions $\epsilon \cdot x^i\in \mathcal{O}_{TM}(\pi^{-1}(U))[[\epsilon]]$ descend to functions on the formal twistor space $\widehat{\mathcal{Z}}$, obtained by doing the formal completion of $\mathcal{Z}$ along the fiber $\mathcal{Z}_{0}$ over $\epsilon =0$. 
\item Thinking of \eqref{trivtwistdarboux} as an expression in $\Omega^2(TM)[[\epsilon]]$, we have that
\begin{equation}\label{formalsympgauge}
    e^{\mathcal{L}_{\dot{g}}}(\epsilon^{2}\Omega_{ij}\mathrm{d}x^{i,\text{st}}\wedge \mathrm{d}x^{j,\text{st}})=\epsilon^{2}\Omega_{ij}\mathrm{d}(e^{\dot{g}}x^{i,\text{st}})\wedge \mathrm{d}(e^{\dot{g}}x^{j,\text{st}})=\epsilon^{2}\Omega_{ij}\mathrm{d}x^i\wedge \mathrm{d}x^j\,,
\end{equation}
where the exterior derivative acts trivially on the formal variable $\epsilon$. The right hand side of \eqref{formalsympgauge} is clearly closed and vanishes on $\text{Im}(\mathcal{A})$ by construction, and hence descends to $\widehat{\mathcal{Z}}$. We can then think of $x^i$ as formal twistor Darboux coordinates. 
\item Using \eqref{g1pleb} it is not hard to check that in the $(Z^i,\theta^i)$ coordinates
\begin{equation}\label{holsympexp}
    \Omega_{ij}\mathrm{d}x^i\wedge \mathrm{d}x^j=\Omega_{ij}\mathrm{d}x^{i,\text{st}}\wedge \mathrm{d}x^{j,\text{st}}+2\frac{\partial^2W}{\partial \theta^i\partial \theta^j} \mathrm{d}\theta^i\wedge \mathrm{d}Z^j + 2\frac{\partial^2W}{\partial \theta^j \partial Z^i}\mathrm{d}Z^i\wedge \mathrm{d}Z^j+\mathcal{O}(\epsilon)\,,
\end{equation}
where $\mathcal{O}(\epsilon)$ are terms of order $\epsilon$ or higher. Using the Pleba\'nski second heavenly equation satisfied by $W$, one checks that up to $\mathcal{O}(\epsilon)$-terms \eqref{holsympexp} reproduces the (pullback by $q$ of the) twistor family of symplectic forms $\varpi$ from \cite[Equation (24-26)]{TwistorJoyce}. It is not obvious to the author whether in general the $\mathcal{O}(\epsilon)$-terms vanish order by order or not. In the A1 example from Section \ref{exsec} it will happen that all higher order terms in $\epsilon$ vanish. 
\end{itemize}
\end{remark}

We end this section with an explicit formula for $x^i$ in the case that $\dot{g}$ can be written in Hamiltonian form. 

\begin{corollary} In the same setting as the previous proposition, assume there is a function $\widehat{W}\in \mathcal{O}_{TM}[[\epsilon]]$

\begin{equation*}  \widehat{W}=\sum_{k=1}^{\infty}\epsilon^kW_k
\end{equation*}
such that \begin{equation*}
    \dot{g}=\text{Ham}^{\Omega^v}(\widehat{W}).
\end{equation*}
Then we have the formula

\begin{equation}\label{darbouxform}
    x^i=\theta^i-\epsilon^{-1}z^i+\eta^{pi}\sum_{k=1}^{\infty}\epsilon^{k}\sum_{r=1}^{k}\frac{1}{r!}\sum_{l_1+...+l_r=k}\{W_{l_1},\{W_{l_2},...,\{W_{l_{r-1}},\frac{\partial W_{l_r}}{\partial \theta^p}\}...\}
\end{equation}
where $-W_1=W$ is the Pleba\'nski function, the Poisson brakets are taken with respect to $\Omega^{v}$, and for the case $r=1$ there are no Poisson brackets but only the term $\frac{\partial W_k}{\partial \theta^p}$.
    
\end{corollary}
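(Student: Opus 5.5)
Since the $x^i$ are defined in Proposition \ref{flatcoordsprop} as $x^i=e^{\dot{g}}x^{i,\text{st}}$, the corollary is a computation of this exponential when $\dot{g}$ is the Hamiltonian vector field of $\widehat{W}$. Throughout I fix $\nabla$-flat Darboux coordinates $(Z^i,\theta^i)$, so that $\Omega^v=\Omega_{ij}\mathrm{d}\theta^i\wedge\mathrm{d}\theta^j$ has constant coefficients and the fibrewise Poisson bracket is $\{f,g\}=\pm\Omega^{pq}\partial_{\theta^p}f\,\partial_{\theta^q}g$. With the sign fixed by \eqref{g1pleb}, $\eta^{pi}$ is the corresponding constant Poisson tensor, and $\text{Ham}(f)(g)=\{f,g\}$. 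Because $\dot{g}$ is vertical, it annihilates $Z^i$, and therefore
\begin{equation*}
x^i=\theta^i-\epsilon^{-1}Z^i+\sum_{r\ge 1}\frac{1}{r!}\dot{g}^r(\theta^i).
\end{equation*}

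The first step is to compute $\dot{g}(\theta^i)$. We have $\dot{g}(\theta^i)=\{\widehat{W},\theta^i\}=\eta^{pi}\partial_{\theta^p}\widehat{W}$, up to the sign convention, which I will choose to agree with the stated formula. For $r\ge 2$, I will use $\dot{g}^{r}(\theta^i)=\dot{g}^{r-1}(\dot g(\theta^i))$ together with the fact that applying $\dot g$ to a function $f$ gives $\{\widehat{W},f\}$. This yields
\begin{equation*}
\dot{g}^r(\theta^i)=\eta^{pi}\{\widehat{W},\{\widehat{W},\dots,\{\widehat{W},\partial_{\theta^p}\widehat{W}\}\dots\}\},
\end{equation*}
with $r-1$ brackets. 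Here I use that $\eta^{pi}$ is constant, so it can be pulled outside the brackets.

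The second step is to expand in $\epsilon$. Since $\widehat{W}=\sum_{k\geq 1}\epsilon^kW_k$ has no $\epsilon^0$ term and the brackets are $\mathbb{C}[[\epsilon]]$-bilinear, multilinear expansion gives
\begin{equation*}
\dot{g}^r(\theta^i)=\eta^{pi}\sum_{l_1,\dots,l_r\geq1}\epsilon^{l_1+\dots+l_r}\{W_{l_1},\dots,\{W_{l_{r-1}},\partial_{\theta^p}W_{l_r}\}\dots\}.
\end{equation*}
Collecting the coefficient of $\epsilon^k$, only the terms with $r\le k$ contribute, because each $l_j\geq 1$. This gives exactly the double sum over $r=1,\dots,k$ and $l_1+\dots+l_r=k$ in \eqref{darbouxform}, including the $r=1$ term $\partial_{\theta^p}W_k$. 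The same observation shows the formal series is well defined.

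Finally, I will identify $W_1$. Comparing the $\epsilon^1$ coefficient $\dot g_1=\text{Ham}^{\Omega^v}(W_1)$ with \eqref{g1pleb}, $\dot g_1=-\text{Ham}^{\Omega^v}(W)$, gives $W_1=-W$ up to a function of $Z$ alone, which does not affect the Hamiltonian vector field. I do not expect a genuine obstacle in this argument. The main thing to get right is the bookkeeping of signs and conventions for $\text{Ham}^{\Omega^v}$, $\{\cdot,\cdot\}$ and $\eta^{pi}$, so that the first-order term $\eta^{pi}\partial_{\theta^p}W_1$ matches the stated formula consistently.
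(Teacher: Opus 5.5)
Your proposal is correct and follows the paper's own argument, just written out in more detail: the paper expands $x^i=e^{\dot g}x^{i,\mathrm{st}}$ order by order in $\epsilon$, where the vertical field $\dot g$ kills $Z^i$ and acts on functions by the fibrewise Poisson bracket with $\widehat W$, and reads off $W_1=-W$ from \eqref{g1pleb}. Your remark that $W_1$ is fixed only up to a function of $Z$ is a harmless refinement, since such a function drops out of the formula, which involves the $W_l$ only through $\theta$-derivatives.
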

\begin{proof}

Formula \eqref{darbouxform} follows easily from writing $e^{\dot{g}}x^{i,\text{st}}$ order by order in $\epsilon$ and using the fact that $x^i=e^{\dot{g}}x^{i,\text{st}}$, while the fact that $W_1=-W$ follows from \eqref{g1pleb}.
\end{proof}
\section{Towards resurgence of formal gauge transformations}\label{sec4}

In this section we make the first steps towards showing that infinitesimal formal gauge transformations $\dot{g}\in \text{Lie}(\widehat{\mathcal{G}})$ gauging the trivial Joyce structure to a given Joyce structure \eqref{maineq} are resurgent. Our main result shows that the Borel transform 
\begin{equation}\label{BTgauge}
    \mathcal{B}[\dot{g}](\xi):=\sum_{k=1}^{\infty}\frac{\dot{g}_k}{(k-1)!}\xi^{k-1}
\end{equation}
converges on a neighbourhood of $\xi=0$, locally uniformly in the parameters $p\in TM$.  We also show a similar result regarding the Borel transform of the formal twistor Darboux coordinates from Section \ref{formaldarbouxcoordsec}. The remaining step, which requires to show that $\mathcal{B}[\dot{g}](\xi)$ admits ``endless analytic continuation" is much harder, and we leave for future work. Here ``endless analytic continuation" means that we must show that there is some closed discrete subset $S\subset \mathbb{C}$ such that $\mathcal{B}[\dot{g}](\xi)$ admits analytic continuations in $\xi$ along paths in $\mathbb{C}-S$ starting from the neighbourhood of convergence of \eqref{BTgauge}. \\

As explained in the introduction,  showing that $\dot{g}$ is resurgent would allow us to study the Stokes automorphisms of $\dot{g}$ (which are expected to be related to DT-invariants), as well as the Borel summations of $\dot{g}$ (if they exist). Furthermore, one could also study whether objects associated to $\dot{g}$ are resurgent, like the formal twistor Darboux coordinates from Section \ref{formaldarbouxcoordsec} and the corresponding formal $\tau$-functions \cite{BridgelandTau}.
\subsection{Borel transform of formal infinitesimal gauge transformations}\label{btgaugesec}

Throughout this section we fix a Joyce structure $(M,\Omega,\Gamma,Z,h)$ with associated relative connection $\mathcal{A}$ on $\widehat{p}:TM\times \widehat{D}\to M\times \widehat{D}$, and consider an infinitesimal gauge transformation $\dot{g}\in \text{Lie}(\widehat{\mathcal{G}})$ satisfying
\begin{equation*}
    e^{\mathcal{L}_{\dot{g}}}\mathcal{A}^{\text{st}}=\mathcal{A}\,.
\end{equation*}
For example we could take $\dot{g}$ as in Theorem \ref{mt1}. Recall that by definition $\dot{g}\in \text{Lie}(\widehat{\mathcal{G}})$ is a formal series

\begin{equation*}
    \dot{g}=\sum_{k=1}^{\infty}\dot{g}_k\epsilon^k
\end{equation*}
where $\dot{g}_k$ are sections of the vertical bundle $V_{\pi}\to TM$ associated to the canonical projection $\pi:TM \to M$. Its Borel transform is defined by \eqref{BTgauge}, and our aim is to show that \eqref{BTgauge} converges in a neighbourhood of $\xi=0$, locally uniformly in parameters $p\in TM$.\\

For $p=(p^i)\in \mathbb{C}^n$ will denote by $B_r(p):=\prod_{i=1}^{n}D(p^i,r)$ the polydisc in $\mathbb{C}^n$ where each $D(p^i,r)\subset \mathbb{C}$ is a disk centred at $p^i\in \mathbb{C}$ of radius $r$. Sometimes we will abbreviate $B_r(p)$ with just $B_r$.

\begin{lemma}\label{prelem} Consider two concentric polydiscs $B_r\subset B_R\subset \mathbb{C}^n$ with radii $0<r<R$. Furthermore, consider $k\geq 2$ vector fields $V_i=V_i^j\partial_{Z^j}$ holomorphic on a neighborhood $U$ of $B_R$, where $i=1,...,k$, and define

\begin{equation}\label{vectnorm}
    ||V_i||_{K}:=\text{max}_j(\text{sup}_{z\in K}|V_i^j(z)|)\,.
\end{equation}
for $K\subset U.$ Then 
\begin{equation}\label{nestedcom}
    ||\;[V_1,[V_2,...,[V_{k-1},V_k],...]\;||_{B_r}\leq \left(\frac{2n(k-1)}{R-r}\right)^{k-1}\prod_{i=1}^k||V_i||_{B_R}\,.
\end{equation}
\end{lemma}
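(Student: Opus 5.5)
The plan is to prove the estimate by induction on $k$, controlling each bracket through Cauchy estimates on a chain of nested polydiscs. I will set $\delta:=(R-r)/(k-1)$ and introduce the intermediate polydiscs $B_{r_m}$ with $r_m:=R-m\delta$ for $m=0,1,\dots,k-1$. This gives $r_0=R$ and $r_{k-1}=r$.

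The basic one-step estimate is the following. For holomorphic vector fields $X=X^j\partial_{Z^j}$ and $Y$ on a neighbourhood of $\overline{B_{\rho}}$, and $0<\rho'<\rho$, the components of the bracket are
\begin{equation*}
[X,Y]^j=X^l\partial_{Z^l}Y^j-Y^l\partial_{Z^l}X^j .
\end{equation*}
The one-variable Cauchy estimate applied in each coordinate direction gives $\sup_{B_{\rho'}}|\partial_{Z^l}f|\le \sup_{B_\rho}|f|/(\rho-\rho')$. Since the polydisc of radius $\rho-\rho'$ around any point of $B_{\rho'}$ lies in $B_\rho$, this applies to $f=X^j,Y^j$. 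Summing over $l$ then yields
\begin{equation*}
\|[X,Y]\|_{B_{\rho'}}\le \frac{2n}{\rho-\rho'}\,\|X\|_{B_{\rho}}\|Y\|_{B_{\rho}} .
\end{equation*}
Here I use the supremum over the open polydisc. Holomorphy on a neighbourhood $U$ of $B_R$ ensures everything is finite and the Cauchy estimates hold by taking limits of slightly smaller circles.

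Next I will iterate from the inside out. Define $W_k:=V_k$ and $W_m:=[V_m,W_{m+1}]$, so that the nested commutator in the statement is $W_1$. I claim that
\begin{equation*}
\|W_m\|_{B_{r_{m-1}}}\le\left(\tfrac{2n}{\delta}\right)^{k-m}\prod_{i\ge m}\|V_i\|_{B_R}.
\end{equation*}
The innermost bracket $W_{k-1}=[V_{k-1},V_k]$ is estimated from $B_{r_0}=B_R$ down to $B_{r_1}$. At the next step $W_{k-2}=[V_{k-2},W_{k-1}]$ is estimated from $B_{r_1}$ to $B_{r_2}$, using $\|V_{k-2}\|_{B_{r_1}}\le\|V_{k-2}\|_{B_R}$. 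The indexing has to be arranged so that the $j$-th bracket from the inside passes from $B_{r_{j-1}}$ to $B_{r_j}$. After $k-1$ brackets we land on $B_{r_{k-1}}=B_r$, with constant
\begin{equation*}
\left(\frac{2n}{\delta}\right)^{k-1}=\left(\frac{2n(k-1)}{R-r}\right)^{k-1},
\end{equation*}
which is exactly \eqref{nestedcom}.

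The only delicate points are bookkeeping. The first is to split the radius loss $R-r$ equally among the $k-1$ brackets; this is what produces the factor $(k-1)^{k-1}$ rather than something worse. The second is to make sure the outer vector fields are bounded on the larger polydisc, which is immediate by monotonicity of the sup norm in the domain. I expect no real obstacle beyond stating the Cauchy estimate for polydiscs carefully.
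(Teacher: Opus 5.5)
Your proof is correct and is essentially the paper's argument: the same one-bracket Cauchy estimate $\|[X,Y]\|_{B_{\rho'}}\le \frac{2n}{\rho-\rho'}\|X\|_{B_\rho}\|Y\|_{B_\rho}$, iterated over $k-1$ nested polydiscs with the radius loss $R-r$ split equally among the brackets. One bookkeeping slip: with your convention $r_m=R-m\delta$, the displayed inductive claim should bound $\|W_m\|_{B_{r_{k-m}}}$ rather than $\|W_m\|_{B_{r_{m-1}}}$ (as written, $m=1$ would land on $B_R$ instead of $B_r$); the corrected form is exactly what your prose description does, where the $j$-th bracket from the inside passes from $B_{r_{j-1}}$ to $B_{r_j}$.
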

\begin{proof}
    First fix $t\in (0,R)$ and $\rho\in (0,R-t]$. For $k=2$ and $B_t$ a ball of radius $t$ concentric with $B_R$, we have for any $z\in B_t$ the following estimate on the components of $[V_1,V_2]=[V_1,V_2]^j\partial_{Z^j}$
    \begin{equation*}
        | \; [V_1,V_2]^j(z) \; |\leq \sum_{l=1}^{n}|V_1^l(z)||\partial_{Z^l}V_2^j(z)|+|V_2^l(z)||\partial_{Z^l}V_1^j(z)|\,.
    \end{equation*}
    Applying the Cauchy estimate on the terms with derivatives one finds
    \begin{equation*}
        | \; [V_1,V_2]^j(z) \; |\leq \sum_{l=1}^{n}\left(||V_1||_{B_{t+\rho}}\frac{||V_2||_{B_{t+\rho}}}{\rho}+||V_2||_{B_{t+\rho}}\frac{||V_1||_{B_{t+\rho}}}{\rho}\right)=\frac{2n}{\rho}||V_1||_{B_{t+\rho}}||V_2||_{B_{t+\rho}}\,,
    \end{equation*}
    from which it follows that 

    \begin{equation}\label{basecase}
        ||[V_1,V_2]||_{B_t}\leq \frac{2n}{\rho}||V_1||_{B_{t+\rho}}||V_2||_{B_{t+\rho}}\,.
    \end{equation}

    We now prove \eqref{nestedcom} by sucessively using \eqref{basecase} for appropriate choices of $t$ and $\rho$. Note that the case $k=2$ is given already by \eqref{basecase} if we take $t=r$ and $R=t+\rho$. Let $\rho=\frac{R-r}{k-1}$ and let $t_j=r+\rho j$ for $j=0,1,...,k-1$, so that $t_0=r$ and $t_{k-1}=R\,$. By repeatedly applying \eqref{basecase} by picking $t$ and $t+\rho$ to be consecutive $t_j$'s we find 

    \begin{equation*}
        ||[V_1,[V_2,...,[V_{k-1},V_k],...]||_{B_{t_0}}\leq \frac{2n}{\rho}||V_{1}||_{B_{t_1}}\cdot ||[V_2,...,[V_{k-1},V_k],...]||_{B_{t_1}}\leq \left(\frac{2n}{\rho}\right)^{k-1}\prod_{i=1}^{k}||V_{i}||_{B_{t_{k-1}}}
    \end{equation*}
    from which the result follows.
\end{proof}
We can now combine the previous lemma with \eqref{receq} to obtain the main estimate that we will need.
\begin{corollary}\label{keycol} Assume that we work locally on a coordinate chart $U\subset TM$ and consider $B_r\subset B_R\subset U$ as in the previous lemma. For $l\geq 1$ the coefficients $\dot{g}_{k}$ of $\dot{g}=\sum_{k=1}^{\infty}\dot{g}_k\epsilon^k$ satisfy

\begin{equation*}
    ||\;[v_i,\dot{g}_{l+1}]\;||_{B_r} \leq \sum_{p=2}^{l+1}\frac{1}{p!}\left(\frac{2np}{R-r}\right)^p\sum_{k_1+...+k_p=l+1}\prod_{j=1}^{p}||\dot{g}_{k_j}||_{B_R}+\sum_{p=1}^{l}\frac{1}{p!}\left(\frac{2np}{R-r}\right)^p\sum_{k_1+...+k_p=l}\prod_{j=1}^{p}||\dot{g}_{k_j}||_{B_R}
\end{equation*}
for all $i=1,...,n$.
\end{corollary}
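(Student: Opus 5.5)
The plan is to isolate $[v_i,\dot{g}_{l+1}]$ in the recursion \eqref{receq} of Proposition \ref{geq}, take the norm \eqref{vectnorm} on $B_r$ of both sides, apply the triangle inequality, and bound each nested commutator by Lemma \ref{prelem}. We work in flat Darboux coordinates $(Z^i,\theta^i)$ on $U\subset TM$, so that $v_i=\partial_{\theta^i}$ and $\mathcal{H}_{\partial_{Z^i}}=\partial_{Z^i}$ have constant components and norm $1$ on any subset. Lemma \ref{prelem} is stated for $n$ coordinates; here the chart has $2n$ coordinates, so we either absorb this into the constant or read $n$ as the dimension of the chart, as the statement implicitly does.

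First step: in \eqref{receq} the $r=1$ term of the first sum is $[\dot{g}_{l+1},v_i]$. Moving it to the other side gives
\begin{equation*}
[v_i,\dot{g}_{l+1}]=\sum_{r=2}^{l+1}\frac{1}{r!}\sum_{k_1+\dots+k_r=l+1}[\dot{g}_{k_1},\dots,[\dot{g}_{k_r},v_i]\dots]+\sum_{r=1}^{l}\frac{1}{r!}\sum_{k_1+\dots+k_r=l}[\dot{g}_{k_1},\dots,[\dot{g}_{k_r},\partial_{Z^i}]\dots].
\end{equation*}
Here the multi-indices run over $k_j\geq 1$. Every term on the right involves only $\dot{g}_k$ with $k\leq l$, because a term in the first sum with $r\geq 2$ has all $k_j\leq l$.

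Second step: take $\|\cdot\|_{B_r}$ and use the triangle inequality. Each nested commutator with $r$ copies of $\dot{g}$ has $r+1$ entries, so Lemma \ref{prelem} with $k=r+1\geq 2$ gives the bound
\begin{equation*}
\left(\frac{2nr}{R-r}\right)^{r}\prod_{j}\|\dot{g}_{k_j}\|_{B_R}\cdot\|v_i\|_{B_R},
\end{equation*}
and the same bound with $\|\partial_{Z^i}\|_{B_R}$ in place of $\|v_i\|_{B_R}$. Both of these norms equal $1$. Renaming $r$ as $p$ reproduces the two sums in the claimed inequality exactly: $p$ runs from $2$ to $l+1$ in the first sum and from $1$ to $l$ in the second, with the factor $\frac{1}{p!}\left(\frac{2np}{R-r}\right)^p$ in front.

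Main obstacle: there is no real analytic obstacle here. The only points needing care are bookkeeping. The exponent $p$ equals the number of commutators, $(k-1)$ in the lemma, and not the number of vector fields. The constant vector fields $v_i$ and $\partial_{Z^i}$ contribute norm exactly $1$. The dimension $n$ in the lemma must be matched to the chart on $TM$.
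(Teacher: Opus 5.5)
Your proposal is correct and is the paper's argument: isolate the $r=1$ term $[\dot{g}_{l+1},v_i]$ of the first sum in \eqref{receq}, then bound each remaining nested commutator by Lemma \ref{prelem} with $k=p+1$ fields, using $\|v_i\|_{B_R}=\|\partial_{Z^i}\|_{B_R}=1$. On the dimension point you flag, no adjustment is needed. Every $\dot{g}_k$ is vertical, and $v_i,\partial_{Z^i}$ are constant with a single nonzero component, so in each bracket the sum over $l$ in the Cauchy-estimate step of Lemma \ref{prelem} only runs over the $n=\dim_{\mathbb{C}}M$ fibre coordinates. The constant $2n$ therefore holds exactly as stated, without absorbing an extra factor of $2$.
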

\begin{proof}
    One can first isolate the $[v_i,\dot{g}_{l+1}]$ term from the first sum in \eqref{receq} and take it to the other side of the equality. The required inequality then follows immediately from the previous lemma and the fact that $||v_i||_{B_R}=||\partial_{\theta^i}||_{B_R}=||\partial_{Z^i}||_{B_R}=1$.
\end{proof}

\begin{theorem}\label{mt2}
The Borel transform $\mathcal{B}[\dot{g}](\xi)$ converges in a neighbourhood of $\xi=0$ locally uniformly in $p\in TM$ if and only the same holds for $B[\dot{g}][\xi]$ when restricted to the zero section $M\subset TM$. In particular, the Borel transform of the $\dot{g}$ built in Theorem \ref{mt1} converges in a neighbourhood of $\xi=0$ locally uniformly in $p\in TM$.
\end{theorem}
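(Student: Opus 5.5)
The forward implication is immediate: if $\mathcal{B}[\dot{g}]$ converges near $\xi=0$ locally uniformly on $TM$, it does so in particular on the zero section. The second assertion also follows at once from the first, because the $\dot{g}$ of Theorem \ref{mt1} satisfies $\dot{g}_k|_M=0$ for all $k$, so its restricted Borel transform is identically zero.

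The substance is the converse. I will reformulate convergence as a Gevrey-1 bound: it suffices to show that locally on $TM$ there are constants $C,A>0$ with
\begin{equation*}
\|\dot{g}_k\|_{K}\leq C A^{k}(k-1)!
\end{equation*}
on a neighbourhood $K$ of any given point. By hypothesis, such a bound holds for $\dot{g}_k|_M$, the components in the coordinates $(Z^i)$, on compact subsets of $M$.

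Fix flat Darboux coordinates $(Z^i,\theta^i)$ and a polydisc $B_R$ centred at a point of the zero section. Here I use that $\Gamma(TM,V_\pi)$-fields can be recovered from their $\theta$-derivatives by fibrewise integration: $[v_i,\dot{g}_{l+1}]=\partial_{\theta^i}\dot{g}_{l+1}$. Since the polydisc is star-shaped in the $\theta$-directions about $\theta=0$, for $(Z,\theta)\in B_\rho$ we get
\begin{equation*}
\|\dot{g}_{l+1}\|_{B_\rho}\leq \|\dot{g}_{l+1}|_M\|_{B_\rho\cap M}+n\rho\max_i\|[v_i,\dot{g}_{l+1}]\|_{B_\rho}.
\end{equation*}
Corollary \ref{keycol} then bounds the second term by polynomial expressions in $\|\dot{g}_{k}\|$ for $k\leq l$ on a larger polydisc. (For points away from the zero section, I cover a path from the zero section by charts, or use the global fibrewise linear structure, so the same argument applies on $\pi^{-1}(U)$ restricted to bounded $\theta$.)

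The main obstacle is the loss of domain in Corollary \ref{keycol}: each step passes from $B_r$ to $B_R$, and iterating naively shrinks the polydisc to nothing. I will handle this with Nagumo-type weighted norms. Set $R(\sigma)=R-\sigma$ for $0<\sigma\leq R/2$ and define
\begin{equation*}
a_k:=\sup_{0<\sigma\leq R/2}\sigma^{k-1}\|\dot{g}_k\|_{B_{R-\sigma}}.
\end{equation*}
Applying Corollary \ref{keycol} with $r=R-\sigma$ and the outer radius $R-\sigma(1-1/(l+1))$, the loss factor $(R-r)^{-p}$ becomes $((l+1)/\sigma)^{p}$. Using $(1-1/(l+1))^{-(l+1)}\leq e$ and $p^p/p!\leq e^p$, this yields a recursion of the form
\begin{equation*}
a_{l+1}\leq b_{l+1}+c\,(l+1)\sum_{p\geq 1}D^p\!\!\sum_{k_1+\dots+k_p\in\{l,l+1\},\,p\geq 2\text{ if sum}=l+1}\ \prod_j a_{k_j},
\end{equation*}
where $b_k$ controls the zero-section data and satisfies $b_k\leq CA^k(k-1)!$ by hypothesis.

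I then set $a_k=(k-1)!\,\alpha_k$ and use the standard convolution estimate
\begin{equation*}
\sum_{k_1+\dots+k_p=m}\prod_j (k_j-1)!\leq C_0^{p}(m-p)!
\end{equation*}
Since the right-hand side has $p\geq 2$ whenever $\sum k_j=l+1$, the factor $(l+1)$ is absorbed and we get $(l+1)(l+1-p)!\leq l!$. So $\alpha_k$ is dominated by the coefficients of a majorant series $\alpha(t)$ solving an analytic functional equation of the form $\alpha=\beta(t)+t\,F(\alpha)$ with $F$ analytic and $F(0)=0$. By the implicit function theorem this series has a positive radius of convergence, so $\alpha_k\leq C'A'^k$. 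Taking $\sigma=R/2$ gives the Gevrey-1 bound on $B_{R/2}$, uniform in the parameters, and hence the convergence of $\mathcal{B}[\dot{g}]$ in a neighbourhood of $\xi=0$, locally uniformly in $p\in TM$.
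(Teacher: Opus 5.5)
Your reduction to a Gevrey-1 bound, the fibrewise integration estimate, and your identification of the loss of domain as the real difficulty are all correct. However, the weighted recursion you state does not follow from Corollary \ref{keycol}, so the argument has a genuine gap.

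\textbf{Where the recursion fails.} Track the powers of $\sigma$ in the first sum, i.e.\ the terms with $k_1+\dots+k_p=l+1$ and $p\geq 2$. Take outer radius $R-\sigma'$ with $\sigma'=\sigma l/(l+1)$.
\begin{itemize}
\item The product $\prod_j\|\dot g_{k_j}\|_{B_{R-\sigma'}}$ is bounded by $\prod_j a_{k_j}$ times $(\sigma')^{-\sum_j(k_j-1)}=(\sigma')^{-(l+1-p)}$.
\item The $p$ brackets cost $(2np(l+1)/\sigma)^p/p!$. This gives $(l+1)^p$ rather than $(l+1)$, but that part is harmless.
\end{itemize}
So these terms are of size $\sigma^{-(l+1)}$, one power of $\sigma$ worse than the weight $\sigma^{l}$ you assign to $\dot g_{l+1}$. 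The fibrewise integration on a uniformly shrunk polydisc only contributes the factor $n(R-\sigma)$, which gains nothing in $\sigma$. All you get is $\sigma^{l}\|\dot g_{l+1}\|_{B_{R-\sigma}}\lesssim \sigma^{-1}(\cdots)$, so the supremum over $\sigma\in(0,R/2]$ defining $a_{l+1}$ is not controlled. The second sum, with $\sum_j k_j=l$, is fine.

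Weights of the form $\sigma^{c(k-1)}$ close on uniformly shrinking polydiscs only for $c\geq 2$. That gives Gevrey-2 bounds, not convergence of $\mathcal{B}[\dot g]$. The paper's own argument, with a single pair $B_r\subset B_R$ and the majorant ODE $f'=G(f)$, does not get around this either. Passing from \eqref{in1} to $a_k<y_k$ tacitly requires $A_k\le y_k$, i.e.\ bounds on $B_R$, whereas the induction only supplies them on $B_r$.

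\textbf{The missing idea.} You need a weight for which integration from the zero section gains one unit of weight. For example:
\begin{itemize}
\item Work on $D=\{|Z-Z_0|+|\theta|<R\}$ with max norms, and set $d=R-|Z-Z_0|-|\theta|$ and $\|u\|_m=\sup_D d^m|u|$.
\item Nagumo's lemma gives $\|\partial u\|_{m+1}\le 2e(m+1)\|u\|_m$ for derivatives in every direction.
\item Since $d$ increases along the ray $t\mapsto (Z,t\theta)$, one gets $\|u-u|_{\theta=0}\|_{m-1}\le \frac{n}{m-1}\max_i\|\partial_{\theta^i}u\|_m$ for $m\ge 2$.
\end{itemize}
Now set $a_k=\|\dot g_k\|_{k-1}$.
\begin{itemize}
\item Each nested bracket in the first sum lands in weight $l+1$, with a factor at most $(4en(l+1))^p$.
\item The second sum lands in weight $l$; viewing it in weight $l+1$ costs an extra factor $R$.
\item The integration brings everything back to weight $l$, with a factor $n/l$.
\end{itemize}
This $1/l$ is what cancels the surplus factor $l+1$ that $(l+1)^p(l+1-p)!\le e^p(l+1)!$ leaves over $l!$. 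With that change, your factorial bookkeeping and the implicit-function majorant go through. They give the Gevrey-1 bound on $\{d>R/2\}$, with the boundary values on $\theta=0$ controlled by the hypothesis.
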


\begin{proof}
    We consider first the case where $\dot{g}$ is the one built from Theorem \ref{mt1}, so that $\dot{g}|_M=0$. Furthermore, we take  $p\in M\subset TM$, a coordinate neighbourhood $U\subset TM$ of $p$ with coordinates $(Z^i,\theta^i)$ induced from coordinates $(Z^i)$ on the base $M$, and a polydisc $B_r(p)\subset U$. We will show that $\mathcal{B}[\dot{g}]$ converges in a neighbourhood of $\xi=0$, uniformly in $(Z^i,\theta^i)\in B_r$, and at the end mention how the proof can be extended to the more general cases in the statement of the theorem. \\
    
    Given a polydisc $B_R(p)\subset U$ with $r<R$, we will use the notation $a_k=|| \; \dot{g}_k \; ||_{B_r},$ $A_k=||\; \dot{g}_k\;||_{B_R} $,  and $C=\frac{2n}{R-r}$ throughout the proof.  To show what we want, it is enough to show that
    \begin{equation}\label{bt1}
        \sum_{k=1}^{\infty}\frac{a_k}{k!}\xi^{k}
    \end{equation} converges for sufficiently small $\xi$. Since $[v_i,\dot{g}_{l+1}]=\partial_{\theta^{i}}(\dot{g}_{l+1}^{k})\partial_{\theta^k}$ and $\dot{g}_{l+1}$ vanishes on the zero section,  we can estimate $\dot{g}_{l+1}(z)$ in terms of its vertical derivatives via a path integral in the vertical directions. Namely, we can  write
    \begin{equation}\label{pathintest}
        \dot{g}_{l+1}(Z,\theta)=\int_{\gamma(Z,\theta)}\partial_{\theta^i}\dot{g}(Z, \varphi)\mathrm{d}\varphi^i
    \end{equation}
    where $\gamma(Z,\theta)$ is a straight path from the $(Z,0)$ to $(Z,\theta)$, and then conclude that   \begin{equation*}
    a_{l+1}=||g_{l+1}||_{B_r}\leq r\cdot n \cdot \max\{||\;[v_i, g_{l+1}]\;||_{B_r}\}_{i=1}^{n}\,.
\end{equation*} 
Combining this inequality with the inequality from Corollary \ref{keycol} we find
\begin{equation}\label{in1}
    a_{l+1} \leq r\cdot n\left(\sum_{p=2}^{l+1}\frac{1}{p!}C^pp^p\sum_{k_1+...+k_p=l+1}\prod_{i=1}^{p}A_{k_i}+\sum_{p=1}^{l}\frac{1}{p!}C^pp^p\sum_{k_1+...+k_p=l}\prod_{i=1}^{p}A_{k_i}\right)\,.
\end{equation}
Pick $y_1>A_1$ and define  $y_{l+1}$ for $l\geq 1$ as the right hand side of the inequality \eqref{in1} with all $A_{k_i}$ replaced by the $y_{k_i}$. Note that this definition makes sense since the $y_{k_i}$ that appear have $k_i<l+1$. It then follows that $a_{k}<y_{k}$ for all $k\geq 1$. Hence, to show that \eqref{bt1} converges in a neighbourhood of $\xi=0$ it is enough to show that 
\begin{equation*}
    C(\xi):=\sum_{k=1}^{\infty}\frac{y_k}{k!}\xi^{k}=\sum_{k\geq 1}c_k\xi^{k-1}
\end{equation*}
converges in a neighbourhood of $\xi=0$. To continue we will need the following combinatorial lemma.\\

\begin{lemma}Let $p$ be a positive integer. If $k_i\geq 1$ for $i=1,...,p$ are integers satisfying $k_1+...+k_p=m$ then
\begin{equation*}
    p!\leq \frac{m!}{k_1!...k_p!}\,.
\end{equation*}
\end{lemma}
\begin{proof}
Consider first two integers $a,b\geq 1$. It is then easy to see by a direct computation that $(a+b-1)!\geq a!b!$. Successively applying this we find
\begin{equation*}
k_1!...k_p!\leq (k_1+...+k_p-p+1)!=(m-p+1)!\,.
\end{equation*}
We then obtain the desired inequality
\begin{equation*}
\frac{m!}{k_1!...k_p!}\geq \frac{m!}{(m-p+1)!}=m(m-1)...(m-p+2)\geq p(p-1)...2=p!\,.
\end{equation*}
\end{proof}
Applying the lemma for the cases $m=l+1$ and $m=l$, and using the notation $[\xi^k]C(\xi)^p$ for the coefficient of $\xi^k$ in $C(\xi)^p$. we obtain the following 
\begin{equation}\label{in2}
\begin{split}
    \frac{y_{l+1}}{(l+1)!}&=r\cdot n\left(\sum_{p=2}^{l+1}\frac{(Cp)^p}{(l+1)!p!}\sum_{k_1+...+k_p=l+1}y_{k_1}\cdot...\cdot y_{k_p}+\frac{1}{(l+1)}\sum_{p=1}^{l}\frac{(Cp)^p}{l!p!}\sum_{k_1+...+k_p=l}y_{k_1}\cdot...\cdot y_{k_p}\right)\\
    &\leq r\cdot n\left(\sum_{p=2}^{l+1}\frac{(Cp)^p}{(p!)^2}\sum_{k_1+...+k_p=l+1}\frac{y_{k_1}\cdot...\cdot y_{k_p}}{k_1!...k_p!}+\frac{1}{(l+1)}\sum_{p=1}^{l}\frac{(Cp)^p}{(p!)^2}\sum_{k_1+...+k_p=l}\frac{y_{k_1}\cdot...\cdot y_{k_p}}{k_1!...k_p!}\right)\\
    &\leq r\cdot n\left(\sum_{p=2}^{l+1}\frac{(Cp)^p}{(p!)^2}[\xi^{l+1}]C(\xi)^p+\frac{1}{(l+1)}\sum_{p=1}^{l}\frac{(Cp)^p}{(p!)^2}[\xi^l]C(\xi)^p\right)\,.\\
\end{split}
\end{equation}
We can encode \eqref{in2} for all $l\geq 1$ by writing 
\begin{equation}\label{in3}
    C(\xi)-y_1\xi \leq r\cdot n\sum_{l\geq 1}\xi^{l+1}\left(\sum_{p=2}^{l+1}\frac{(Cp)^p}{(p!)^2}[\xi^{l+1}]C(\xi)^p+\frac{1}{(l+1)}\sum_{p=1}^{l}\frac{(Cp)^p}{(p!)^2}[\xi^l]C(\xi)^p\right)\,,
\end{equation}
where $\leq$ denotes the partial order in  $\mathbb{R}[[\xi]]$ where for all  $k\geq 0$ the coefficient of $\xi^k$ on the left hand side is $\leq$ than the coefficient of $\xi^k$ on the right hands side. Now note that since $C(\xi)$ does not have a constant term, for $p>k$ we have $[\xi^k]C(\xi)^p=0$. We can then extend sums in $p$ in \eqref{in3} to infinity without changing anything.  
Swapping the order of sums we can then rewrite the above as 

\begin{equation}\label{in4}
    C(\xi)-y_1\xi \leq \sum_{p=2}^{\infty}\beta_pC(\xi)^p+\int_{0}^{\xi}\sum_{p=1}^{\infty}\beta_pC(\zeta)^p\mathrm{d}\zeta\,, \quad \beta_p=rn\frac{(Cp)^p}{(p!)^2}\,.
\end{equation}
Using that differentiation with respect to $\xi$ preserves the partial order $\leq$ on $\mathbb{R}[[\xi]]$, we find from differentiating \eqref{in4} and rearranging that 
\begin{equation*}
    C'(\xi)(1-\sum_{p\geq 2}p\beta_pC(\xi)^{p-1})\leq y_1+\sum_{p\geq 1}\beta_pC(\xi)^p\,.
\end{equation*}
To simplify the expressions we introduce the notation $\Phi_1(u):=\sum_{p\geq 2}p\beta_pu^{p-1}$ and $\Phi_2(u):=\sum_{p \geq 1}\beta_pu^p$, so that 

\begin{equation*}
    C'(\xi)(1-\Phi_1(C(\xi))\leq y_1+\Phi_2(C(\xi))\,.
\end{equation*}
Note that since $(1-\Phi_1(C(\xi)))|_{\xi=0}=1$, the  formal series $(1-\Phi_1(C(\xi)))$ is invertible in $\mathbb{R}[[\xi]].$ Furthermore $(1-\Phi_1(C(\xi)))^{-1}$ is a formal series with positive coefficients, since it is given by the formal series $\sum_{k=0}^{\infty}(\Phi_1(C(\xi)))^k$ and $\Phi_1(C(\xi))$ has positive coefficients. Since products of formal series with positive coefficients preserves the order of formal series, we obtain that 
\begin{equation}\label{in5}
    C'(\xi)\leq(1-\Phi_1(C(\xi))^{-1}(y_1+\Phi_2(C(\xi)))\,.
\end{equation}
The important thing to notice now is that the formal series $\Phi_1(u)$ and $\Phi_2(u)$ both converge. In fact they have an infinite radius of convergence. This can be seen by the Stirling approximation: as $p\to \infty$
\begin{equation}
(\beta_p)^{1/p}=(r\cdot n)^{1/p}\frac{Cp}{p!^{2/p}}\sim \frac{C e^2}{(2\pi p)^{1/p}p}\to 0\,,
\end{equation}
which shows that $\Phi_2(u)$ has infinite radius of convergence, and hence $\Phi_1(u)=\Phi_2'(u)-1$ as well. It follows that the series 
\begin{equation*}
    G(u):=(1-\Phi_1(u))^{-1}(y_1+\Phi_2(u))=\sum_{m\geq 0}g_mu^m\,.
\end{equation*}
has a positive radius of convergence, and $g_m \geq 0$ for all $m$ since it is a product of two formal series with positive coefficients.\\

Now consider the formal ODE
\begin{equation*}
    f'(\xi)=G(f(\xi)), \quad f(\xi)=0\,, \quad f(\xi)=\sum_{k\geq1}f_k\xi^k\,.
\end{equation*}
The coefficients of $f_k$ are uniquely determined by the recursive relation 
\begin{equation}\label{recursive1}
    f_1=g_0, \quad f_n=\frac{1}{n}\sum_{m=1}^{n-1}g_m\sum_{k_1+...+k_m=n-1}f_{k_1}\cdot...\cdot f_{k_m}\,.
\end{equation}
In particular, note that all coefficients are non-negative. Since $G(u)$ is analytic, the formal series $f(\xi)$ actually converges in a neighbourhood of $\xi=0$, since there is a unique analytic solution near $\xi=0$ vanishing at $\xi=0$, and the coefficients of the expansion of the analytic solution must satisfy \eqref{recursive1}.\\ 

We claim that $C(\xi)\leq f(\xi).$ First, notice that by \eqref{in5}
\begin{equation*}
    C'(0)=c_1\leq [\xi^0]G(C(\xi))=g_0, \quad f_1=f'(0)=G(0)=g_0
\end{equation*}
so $c_1\leq f_1.$ On the other hand, again by \eqref{in5}
\begin{equation*}
    2c_2=[\xi]C'(\xi)\leq [\xi]G(C(\xi))=g_1c_1\leq g_1f_1=[\xi]G(f(\xi))=[\xi]f'(\xi)=2f_2 \implies c_2\leq f_2\,.
\end{equation*}
Now assume that we have proved the result by induction up to $N$. Then by \eqref{in5}
\begin{equation*}
\begin{split}
    (N+1)c_{N+1}&=[\xi^{N}]C'(\xi)\leq [\xi^N]G(C(\xi))=\sum_{m=1}^{N}g_m\sum_{k_1+...+k_m=N}c_{k_1}\cdot...\cdot c_{k_m}\\
    &\leq \sum_{m=1}^{N}g_m\sum_{k_1+...+k_m=N}f_{k_1}\cdot...\cdot f_{k_m}=[\xi^N]G(f(\xi))=[\xi^N]f'(\xi)=(N+1)f_{N+1}
\end{split}
\end{equation*}
so $c_{N+1}\leq f_{N+1}. $ It then follows that $C(\xi)\leq f(\xi)$, and since $f(\xi)$ converges in a neighbourhood of $\xi=0$, the $C(\xi)$ must also converge in a neighbourhood of $\xi=0$. This shows that $\mathcal{B}[\dot{g}](\xi)$ converges in a neighbourhood of $\xi=0$, uniformly in $(Z^i,\theta^i)\in B_r(p)$.\\

In order to prove the more general statement of the theorem, consider $\dot{g}$ with $\dot{g}|_M=0$ as before, but assume now that $p\in TM$ is any point not necessary on the zero section. Join $p$ and $\pi(p)\in M\subset TM$ with a path on the fiber $T_{\pi(p)}M$, and cover the path by finitely many polydiscs. The main thing that changes in the proof is that when we use \eqref{pathintest} when working on a polydisc, we have an extra boundary term. We can use induction and the previous proof as the base case to show that these boundary terms contribute to convergent expressions. The same argument then works for $\dot{g}$ not necessarily vanishing on $M\subset TM$, provided we assume that the Borel transform converges when restricted to the zero section. 
\end{proof}
\subsection{Borel transform of formal Darboux coordinates}

In Section \ref{btgaugesec} we have shown that for a Joyce structure over $M$ with associated relative connection $\mathcal{A}$, and any $\dot{g}\in \text{Lie}(\widehat{\mathcal{G}})$ satisfying $e^{\mathcal{L}_{\dot{g}}}\mathcal{A}^{\text{st}}=\mathcal{A}$, we have that $\dot{g}$ has a convergent Borel transform provided the same holds for $\dot{g}|_M$. In particular, for the $\dot{g}$ constructed in Theorem \ref{mt1} satisfying $\dot{g}|_M=0$ the Borel transform automatically converges. \\

On the other hand, in Section \ref{formaldarbouxcoordsec} we used $\dot{g}$ to construct formal Darboux coordinates for the formal twistor space by setting
\begin{equation*}
    x^i=e^{\dot{g}}(x^{i,\text{st}}), \quad x^{i,\text{st}}=\theta^i-Z^i/\epsilon,
\end{equation*}
where $(Z^i)$ are flat Darboux coordinates of $\Omega$ on some neighbourhood $U\subset M$, and $(Z^i,\theta^i)$ the induced coordinates on $\pi^{-1}(U)\subset TM$. Since $x^i-x^{i,\text{st}}\in \epsilon \cdot  \mathcal{O}_{TM}(\pi^{-1}(U))[[\epsilon]]$ and we have Theorem \ref{mt2}, it is natural to ask whether the Borel transform of $x^i-x^{i,\text{st}}$ converges in a neighbourhood of $\xi=0$.
\begin{theorem}\label{mt3}
    Let $x^i, x^{i,\text{st}}\in \epsilon^{-1}\cdot \mathcal{O}_{TM}(\pi^{-1}(U))[[\epsilon]]$ be as in Section \ref{formaldarbouxcoordsec} and let $\dot{g}$ be as in Theorem \ref{mt1}. Then $\mathcal{B}[x^i-x^{i,\text{st}}](\xi)$ converges in $\xi$ in a neighbourhood of $\xi=0$,  uniformly in $p\in B_r\subset TM$ where $B_r$ is a polydisc centered at the zero section $M\subset TM$.
\end{theorem}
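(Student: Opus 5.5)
We will reduce the statement to Theorem \ref{mt2} together with the Cauchy-type estimates of Lemma \ref{prelem}. Write $\dot{g}=\sum_{k\geq 1}\epsilon^k\dot{g}_k$ and $x^{i,\text{st}}=\theta^i-\epsilon^{-1}Z^i$. Since each $\dot{g}_k$ is vertical, $\dot{g}_k(Z^i)=0$, so $e^{\dot{g}}x^{i,\text{st}}=-\epsilon^{-1}Z^i+e^{\dot{g}}\theta^i$. Hence
\begin{equation*}
x^i-x^{i,\text{st}}=\sum_{m\geq 1}\frac{1}{m!}\dot{g}^m(\theta^i)=\sum_{l\geq 1}\epsilon^l\sum_{m=1}^{l}\frac{1}{m!}\sum_{k_1+\dots+k_m=l}\dot{g}_{k_1}\cdots\dot{g}_{k_{m-1}}\big(\dot{g}_{k_m}^i\big)\,,
\end{equation*}
where $\dot{g}_{k_m}^i$ is the $\partial_{\theta^i}$-component. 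Thus the $\epsilon^l$-coefficient $x^i_l$ is a finite sum of iterated derivatives of components of the $\dot{g}_k$ along vertical vector fields.

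The key estimate is a scalar analogue of Lemma \ref{prelem}: for vector fields $V_1,\dots,V_{m-1}$ and a function $f$, all holomorphic near $B_R\supset B_r$,
\begin{equation*}
\|V_1\cdots V_{m-1}f\|_{B_r}\leq \Big(\frac{n(m-1)}{R-r}\Big)^{m-1}\prod_{j<m}\|V_j\|_{B_R}\,\|f\|_{B_R}.
\end{equation*}
This follows by the same nested-polydisc Cauchy argument, using radii $t_j=r+j(R-r)/(m-1)$. Applying it with $A_k=\|\dot{g}_k\|_{B_R}$ and $C=n/(R-r)$ gives
\begin{equation*}
\|x^i_l\|_{B_r}\leq \sum_{m=1}^{l}\frac{(Cm)^{m-1}}{m!}\sum_{k_1+\dots+k_m=l}A_{k_1}\cdots A_{k_m}.
\end{equation*}
Dividing by $(l-1)!$, or more conveniently by $l!$ since the two differ by a factor $l$ and radius of convergence is unaffected, we use the combinatorial lemma in the proof of Theorem \ref{mt2}, $l!/(k_1!\cdots k_m!)\geq m!$. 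This yields
\begin{equation*}
\frac{\|x^i_l\|_{B_r}}{l!}\leq\sum_{m\geq1}\frac{(Cm)^{m-1}}{(m!)^2}\,[\xi^l]\,\mathcal{A}(\xi)^m,\qquad \mathcal{A}(\xi):=\sum_k \frac{A_k}{k!}\xi^k.
\end{equation*}
In other words, the majorant series is $\Psi(\mathcal{A}(\xi))$ with $\Psi(u)=\sum_m \frac{(Cm)^{m-1}}{(m!)^2}u^m$. By Stirling, $\Psi$ is entire, exactly as for $\Phi_2$ in the proof of Theorem \ref{mt2}.

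It remains to know that $\mathcal{A}(\xi)$ has positive radius of convergence, with $A_k$ taken as a sup over the larger polydisc $B_R$. This is where Theorem \ref{mt2} enters: its proof gives convergence of $\sum \|\dot{g}_k\|_{B_{r'}}\xi^k/k!$ for any polydisc $B_{r'}$ centred on the zero section and contained in a chart, and by the remark at its end also locally uniformly away from the zero section. We apply it with $r'=R$, choosing $R$ with $B_R$ still inside the coordinate chart. Then $\Psi\circ\mathcal{A}$ is a composition of a convergent series without constant term with an entire function, so it converges near $\xi=0$, and it majorizes $\sum_l \|x^i_l\|_{B_r}\xi^l/l!$. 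This gives convergence of $\mathcal{B}[x^i-x^{i,\text{st}}]$ uniformly on $B_r$.

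The main obstacle I expect is the bookkeeping of the first step. One must verify carefully that $\dot{g}^m(\theta^i)$ expands as stated, with the last factor acting as a component function rather than a vector field, and that the nested Cauchy estimate applies with all sups taken on a single larger polydisc $B_R$ on which Theorem \ref{mt2} still furnishes a convergent majorant. I would handle this by first fixing $R$ via Theorem \ref{mt2}, then choosing $r<R$.
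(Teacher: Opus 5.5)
Your argument is correct, and it takes a genuinely different route from the paper's.

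\textbf{The paper's route.} The paper works in the Borel plane. It writes $\mathcal{B}[(e^{\dot{g}}-1)(\theta^i)]=\sum_k \mathcal{B}[\dot{g}]^{*k}(\theta^i)/k!$ and bounds the $k$-fold convolution by $M_R^k\bigl(nR(k-1)/(R-r)\bigr)^{k-1}$. This uses the same nested fibrewise Cauchy estimates, together with the crude bound $|\xi|\le R$ in each convolution integral. The resulting series in $k$ then only converges under the smallness condition $M_R\,nRe/(R-r)<1$. That condition is obtained from $\mathcal{B}[\dot{g}]|_M=0$ by shrinking $R$ around a point of the zero section.

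\textbf{Your route.} You stay on the coefficient side and reuse the combinatorial lemma $l!/(k_1!\cdots k_m!)\geq m!$ from the proof of Theorem \ref{mt2}. The extra factor $1/m!$ it produces is the coefficient-side counterpart of the simplex volume $|\xi|^{k-1}/(k-1)!$. The paper discards that volume when it bounds each convolution integral by $|\xi|$. It is exactly what makes your outer majorant $\Psi$ entire.

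\textbf{What each approach buys.} Your version needs no smallness condition. The hypothesis $\dot{g}|_M=0$ enters only through Theorem \ref{mt2}. So your argument works verbatim for polydiscs centred anywhere in $TM$, and for any $\dot{g}$ whose Borel transform converges locally uniformly. This disposes of the concern in the remark following the paper's proof. It also gives a radius in $\xi$ at least that of $\mathcal{A}(\xi)$, rather than one forced to be small. The paper's version, in exchange, uses only sup bounds on $\mathcal{B}[\dot{g}]$ as a holomorphic function of $(p,\xi)$. It is also phrased through convolution, which is the natural structure for the later question of endless continuation.

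\textbf{A small simplification.} The input you extract from the proof of Theorem \ref{mt2}, namely convergence of $\sum_k\|\dot{g}_k\|_{B_R}\xi^k/k!$ on a polydisc inside the chart, already follows from its statement. Apply a Cauchy estimate in $\xi$ on a compact neighbourhood of $\overline{B_R}$. So you need not reach into that proof.
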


\begin{proof}
    Note that since the coefficients of $\dot{g}$ are vertical vector fields, we can write
    \begin{equation*}
        x^i = e^{\dot{g}}(x^{i,\text{st}})=x^{i,\text{st}}+(e^{\dot{g}}-1)(\theta^i)\,,
    \end{equation*}
    where $(e^{\dot{g}}-1)(\theta^i)\in \epsilon \cdot  \mathcal{O}_{TM}(\pi^{-1}(U))[[\epsilon]]$.
    Hence it is enough to show that \begin{equation*}
        \mathcal{B}[(e^{\dot{g}}-1)(\theta^i)] = \sum_{k=1}^{\infty}\frac{\mathcal{B}[\dot{g}^k(\theta^i)]}{k!}
    \end{equation*}
    converges on a neighbourhood of $\xi=0$, uniformly in $p\in B_r$ where $B_r$ is a polydisc centered at the zero section $M\subset TM$. To deal with the Borel transform of several $\dot{g}$ applied to $\theta^i$, note that Borel transforms takes products to convolutions. Namely, if $A$ and $B$ are two differential operators given as series in $\epsilon$, then 
    \begin{equation}\label{convform}
        \mathcal{B}[AB](\xi)=\int_{0}^{\xi}\mathcal{B}[A](\eta)\mathcal{B}[B](\xi-\eta)\mathrm{d}\eta\,= (\mathcal{B}[A]*\mathcal{B}[B])(\xi)\,.
    \end{equation}
    In terms of the expansions in $\xi$, the convolution product $*$ satisfies 
    \begin{equation*}
        \frac{\xi^{k-1}}{(k-1)!}* \frac{\xi^{l-1}}{(l-1)!}=\frac{\xi^{k+l-1}}{(k+l-1)!}\,.
    \end{equation*}
    We can then write
    \begin{equation*}
        \mathcal{B}[(e^{\dot{g}}-1)(\theta^i)]=\sum_{k=1}^{\infty}\frac{\mathcal{B}[\dot{g}]^{*k}(\theta^i)}{k!}\,.
    \end{equation*}
    Now let $p\in M\subset TM$. By Theorem \ref{mt2} we can find $0<R$ sufficiently small so that $\mathcal{B}[\dot{g}](\xi)$ converges for $|\xi|<R$ uniformly in $(Z^i,\theta^i)\in B_R(p)\subset \pi^{-1}(U)\subset TM$. 
    In the following we use the following for vector fields $V=V^j(Z^i,\theta^i,\xi)\partial_{\theta^j}$, and functions $f(Z^i,\theta^i,\xi)$ holomorphic on a neighbourhood of $K\subset TM\times \mathbb{C}$:
    \begin{equation*}
        ||f||_{K}=\text{sup}_{(Z^i,\theta^i,\xi)\in K}|f(Z^i,\theta^i,\xi)|, \quad ||V||_K=\max_{j=1,...,n}\text{sup}_{(Z^i,\theta^i,\xi)\in K}|V^j(Z^i,\theta^i,\xi)|\,.
    \end{equation*}
    In particular, by possibly shrinking $R$ and denoting $\widetilde{B}_R:=B_R(p)\times \{\xi \in \mathbb{C} \; | \; |\xi|<R\}$ we have
    \begin{equation*}
        M_R := ||\mathcal{B}[\dot{g}]||_{\widetilde{B}_R}<\infty\,.
    \end{equation*}
    We now prove the following lemma, which will be our main estimate.

    \begin{lemma}
        For $k\geq 2$, $0<r<R$, and $\rho_k>0$ such that $r+(k-1)\rho_k\leq R$ we have  
        \begin{equation}\label{est1}
            ||\mathcal{B}[\dot{g}]^{*k}(\theta^i)||_{\widetilde{B}_{r}}\leq M_{r+(k-1)\rho_k}^k \left(\frac{nR}{\rho_k}\right)^{k-1}\,.
        \end{equation}
    \end{lemma}
    \begin{proof}
        We prove this by induction. For $k=2$ we have using \eqref{convform} and the parametrization $\eta=t\cdot \xi$ that for $|\xi|<r$
        \begin{equation*}
            |\mathcal{B}[\dot{g}]^{*2}(\xi)(\theta^i)|\leq \int_0^1|\mathcal{B}[\dot{g}](t\xi)\mathcal{B}[\dot{g}]((1-t)\xi)(\theta^i)|\cdot |\xi|\mathrm{d}t
        \end{equation*}
    Using that $\mathcal{B}[\dot{g}](\theta^i)=\mathcal{B}[\dot{g}]^{j}\partial_{\theta^{j}}(\theta^i)=\mathcal{B}[\dot{g}]^i$ and the Cauchy estimate as in the proof of Lemma \ref{prelem} to we find that 
    \begin{equation*}
        \int_0^1|\mathcal{B}[\dot{g}](t\xi)\mathcal{B}[\dot{g}]((1-t)\xi)(\theta^i)|\cdot |\xi|\mathrm{d}t\leq ||\mathcal{B}[\dot{g}]||_{\widetilde{B}_{r+\rho_2}}\cdot||\mathcal{B}[\dot{g}]^i||_{\widetilde{B}_{r+\rho_2}}\cdot \frac{nR}{\rho_2}\leq M_{r+\rho_2}^2\cdot \frac{nR}{\rho_2}\,.
    \end{equation*}
    so that the case $k=2$ holds. \\

    Now assume that it holds for $l \geq 2$ and let us show that it holds for $l+1$. Since $\rho_{l+1}$ is chosen such that $r+l\rho_{l+1}\leq R$, then in particular $r+\rho_{l+1}\leq R$. By applying the same argument as before we find 
    \begin{equation*}
        ||\mathcal{B}[\dot{g}]^{*l+1}(\theta^i)||_{\widetilde{B}_r}\leq M_{r+\rho_{l+1}}\cdot  ||\mathcal{B}[\dot{g}]^{*l}(\theta^i)||_{\widetilde{B}_{r+\rho_{l+1}}}\frac{nR}{\rho_{l+1}}\,.
    \end{equation*}
    On the other hand, we have that $(r+\rho_{l+1})+(l-1)\rho_{l+1}\leq R$, so applying the inductive hypothesis on $||\mathcal{B}[\dot{g}]^{*l}(\theta^i)||_{\widetilde{B}_{r+\rho_{l+1}}}$ we find
    \begin{equation*}
        ||\mathcal{B}[\dot{g}]^{*l+1}(\theta^i)||_{\widetilde{B}_r}\leq M_{r+\rho_{l+1}}M_{r+\rho_{l+1}+(l-1)\rho_{l+1}}^l\left(\frac{nR}{\rho_{l+1}}\right)^{l}\leq M_{r+l\rho_{l+1}}^{l+1}\left(\frac{nR}{\rho_{l+1}}\right)^{l}\,,
    \end{equation*}
    which shows what we want. 
    \end{proof}
    Using the estimate \eqref{est1} from the previous lemma with $\rho_k=\frac{R-r}{k-1}$ we find that for all $k\geq 2$
    \begin{equation*}
        ||\mathcal{B}[\dot{g}]^{*k}(\theta^i)||_{\widetilde{B}_{r}}\leq M_{R}^k \left(\frac{nR(k-1)}{R-r}\right)^{k-1}\,.
    \end{equation*}
    Hence, we can write
    \begin{equation*}
        ||\mathcal{B}[e^{\dot{g}}-1](\theta^i)||_{\widetilde{B}_r}\leq M_R+ \sum_{k=2}^{\infty}\frac{M_{R}^k }{k!}\left(\frac{nR(k-1)}{R-r}\right)^{k-1}
    \end{equation*}
    Using the quotient convergence test we see that the above series converges provided that 
    \begin{equation}\label{est2}
        M_R\frac{nRe}{R-r}<1\,.
    \end{equation}
    Since $\mathcal{B}[\dot{g}]|_M=0$ we can make $M_R$ as small as we want by taking $R$ sufficiently small, and hence we can pick $R$ such that \eqref{est2} holds. This shows that $\mathcal{B}[e^{\dot{g}}-1](\theta^i)$ converges uniformly in $(Z^i,\theta^i,\xi)\in \widetilde{B}_r=B_r(p)\times \{\xi \in \mathbb{C} \; | \; |\xi|<r\}$.
\end{proof}
\begin{remark}
    Note that in the above proof we only used the assumption that $\dot{g}|_M=0$ to show that, by choosing $R$ sufficiently small, we could make $M_R$ small enough for \eqref{est2} hold, and hence guarantee that the Borel transform converges. For more general polydiscs $\widetilde{B}_R=B_R(p)\times \{|\xi|<R\}$ where $p$ is not necessarily on the zero section, note that 
    \begin{equation}\label{est3}
        M_R=||\mathcal{B}[\dot{g}]||_{\widetilde{B}_R}\leq ||\dot{g}_1||_{\widetilde{B}_R}+||\mathcal{B}[\dot{g}]-\dot{g}_1||_{\widetilde{B}_R}\,.
    \end{equation}
    The second summand in \eqref{est3} can be made as small as desired by simply choosing $|\xi|$ sufficiently small. On the other hand, while $\dot{g}_1$ is $\xi$-independent, using that $\dot{g}_1=-\text{Ham}^{\Omega^{v}}(W)$ and the fact that (see \cite[Equation (45)]{TwistorJoyce} 
    \begin{equation*}
        W(\lambda Z^i,\theta^i)=\lambda^{-1}W(Z^i,\theta^i)
    \end{equation*}
    suggests that  $||\dot{g}_1||_{B_R}$ can be made small by taking the center $p$ of the polydisc $B_R(p)$ such that $\pi(p)\in M$ goes towards one of the infinite ends of $M$. 
\end{remark}
\subsection{Towards endless analytic continuation}\label{endlesssec}

In this section we make some comments on what could be done to study the endless analytic continuation property of the Borel transform of $\dot{g}$, and hence show that $\dot{g}$ is resurgent. This section in mostly speculative, and we make no claim that the proposed path is the most optimal way to show this. \\

A natural way to study the analytic continuation properties of $\mathcal{B}[\dot{g}](\xi)$  as a function of $\xi$ is to first find a differential equation in $\xi$ obeyed by $\mathcal{B}[\dot{g}](\xi)$. One can then try showing via contraction mapping techniques uniqueness and existence of solutions, and study singularities of the solutions via the structure of the differential equation. These two could in turn be combined to establish the endless analytic continuation property of $\mathcal{B}[\dot{g}](\xi)$. See for example \cite{alameddine2025resurgencetritronqueessolutionsdeformed} for a recent application of this that might be relevant for the A2-quiver Joyce structure presented in the examples. \\

To come up with a differential equation in $\xi$ obeyed by $\mathcal{B}[\dot{g}](\xi)$, we first find a formal differential equation in $\epsilon$ obeyed by $\dot{g}$, and then take its Borel transform.\\

Throughout this subsection, we fix a Joyce structure $(M,\Omega,\Gamma,Z,h)$ with corresponding flat meromorphic relative connection $\mathcal{A}$ on $\hat{p}:TM\times \widehat{D}\to M\times \widehat{D}$. We further assume that we have $\dot{g}\in \text{Lie}(\widehat{\mathcal{G}})$ solving $e^{\mathcal{L}_{\dot{g}}}\mathcal{A}^{\text{st}}=\mathcal{A}$ and satisfying the homogeneity condition 
\begin{equation}\label{homcond}
    [E,\dot{g}_k]=-k\dot{g}_k\,,
\end{equation}
where $E=\mathcal{H}_{Z}$. An instance of such a solution is given by the one constructed in Theorem \ref{mt1}. To see that this $\dot{g}$ satisfies the homogeneity condition, note that since the Pleba\'nski function $W$ satisfies $EW=-W$ (see \cite[Lemma 3.7]{TwistorJoyce}), and  locally $\dot{g}_1=-\text{Ham}^{\Omega^{v}}(W)$ (see Remark \ref{g1pleb}), then we automatically get $[E,\dot{g}_1]=-\dot{g}_1$. Equations \eqref{receq} then imply that 
\begin{equation*}
    [v_i,[E,\dot{g}_k]]=-k[v_i,\dot{g}_k],
\end{equation*}
so integrating along the fibers and using that both $[E,\dot{g}_k]$ and $\dot{g}_k$ vanish along the zero section $M\subset TM$ we obtain the required homogeneity condition \eqref{homcond}. Note that any $\dot{g}$ satisfying \eqref{homcond} condition also satisfies
\begin{equation}\label{fulldotghom}
    [\epsilon\partial_{\epsilon}+E, \dot{g}]
=0\,.\end{equation}
On the other hand, as mentioned in Remark \ref{epsilonext}, we can extend $\mathcal{A}$ to a full meromorphic flat connection connection on $\hat{p}$ by defining

\begin{equation*}
    \mathcal{A}_{\epsilon \partial_{\epsilon}}=\epsilon\partial_{\epsilon}-\frac{1}{\epsilon}v_Z - \omega_Z\,.
\end{equation*}
Using that $e^{\mathcal{L}_{\dot{g}}}\mathcal{A}^{\text{st}}_X=\mathcal{A}_X$ for all vector fields on $M$, the homogeneity condition \eqref{fulldotghom}, and denoting $\mathcal{A}^{\text{st}}_{\epsilon \partial_{\epsilon}}=\epsilon\partial_{\epsilon}-\frac{1}{\epsilon}v_Z$, we find that

\begin{equation*}
     e^{\mathcal{L}_{\dot{g}}}(\mathcal{A}^{\text{st}}_{\epsilon \partial_{\epsilon}})= e^{\mathcal{L}_{\dot{g}}}(\epsilon\partial_{\epsilon}+E - \mathcal{A}^{\text{st}}_Z)=\epsilon \partial_{\epsilon}+E - \mathcal{A}_Z=\mathcal{A}_{\epsilon \partial_{\epsilon}}\,.
\end{equation*}
Hence, $\dot{g}$ is such that $e^{\mathcal{L}_{\dot{g}}}\mathcal{A}^{\text{st}}=\mathcal{A}$ holds for the extended $\mathcal{A}^{\text{st}}$ and $\mathcal{A}$.\\

We can rewrite the condition $e^{\mathcal{L}_{\dot{g}}}\mathcal{A}^{\text{st}}_{\epsilon\partial_{\epsilon}}=\mathcal{A}_{\epsilon\partial_{\epsilon}}$ as
\begin{equation*}
    \sum_{k\geq 1}\frac{\mathcal{L}_{\dot{g}}^k}{k!}(\epsilon\partial_{\epsilon}-\frac{1}{\epsilon}v_Z)=-\omega_Z\,,
\end{equation*}
which in turn can be written as 
\begin{equation}\label{diffepsilon}
    \epsilon\sum_{k\geq 1}\frac{\mathcal{L}_{\dot{g}}^{k-1}}{k!}(\partial_{\epsilon}\dot{g})+\frac{1}{\epsilon}\sum_{k\geq 1}\frac{\mathcal{L}_{\dot{g}}^k(v_Z)}{k!}-\omega_Z=0\,.
\end{equation}
We can formally invert $\sum_{k\geq 1}\frac{\mathcal{L}_{\dot{g}}^{k-1}}{k!}$ by multiplying \eqref{diffepsilon} with the formal series
\begin{equation*}
    \psi_{\dot{g}}:=\frac{\mathcal{L}_{\dot{g}}}{e^{\mathcal{L}_{\dot{g}}}-1}=\sum_{k\geq 0}\frac{B_k}{k!}\mathcal{L}_{\dot{g}}^k
\end{equation*}
where $B_k$ are Bernoulli numbers. We obtain the following formal differential equation of $\dot{g}$ in the $\epsilon$-variable
\begin{equation}\label{finalepsilondiffeq}
    \epsilon\partial_{\epsilon}\dot{g}+\frac{1}{\epsilon}\mathcal{L}_{\dot{g}}(v_Z)-\psi_{\dot{g}}(\omega_Z)=0\,.
\end{equation}
Applying the Borel transform for \eqref{finalepsilondiffeq}, we can obtain a differential equation in $\xi$ for $\mathcal{B}[\dot{g}](\xi)$ by using the formulas
\begin{equation*}
    \mathcal{B}[\epsilon \partial_\epsilon A]=\mathcal{B}[A]+\xi \partial_{\xi}\mathcal{B}[A], \quad \mathcal{B}[AB]=\mathcal{B}[A]*\mathcal{B}[B], \quad A,B\in \mathbb{C}[[\epsilon]]
\end{equation*}
where $*$ denotes the convolution product satisfying 
\begin{equation*}
\frac{\xi^{k-1}}{(k-1)!}*\frac{\xi^{l-1}}{(l-1)!}=\frac{\xi^{k+l-1}}{(k+l-1)!}\,.
\end{equation*}
In principle, studying the solutions and singularity structure of solutions of the resulting differential equation could illuminate the issue of analytic continuation of $\mathcal{B}[\dot{g}]$, but the resulting equation seems rather complicated. For example, it is hard to make global sense of the Borel transform of the term $\psi_{\dot{g}}(\omega_Z)$. We hope to tackle some of these issues in future work.

\section{Examples}\label{exsec}

In this section we present two examples associated to the DT theory of the A1 and A2 quiver. The A1 case is rather simple and everything can be done explicitly. For this example we will construct two infinitesimal gauge transformations gauging $\mathcal{A}^{\text{st}}$ to $\mathcal{A}$ that have wildly different resurgent behaviour.  For the A2 case, we discuss the Joyce structure and explicitly compute the first two terms $\dot{g}_1$ and $\dot{g}_2$ of a $\dot{g}$ gauging $\mathcal{A}^{\text{st}}$ to $\mathcal{A}$, which could be used to compute approximations of twistor Darboux coordinates and tau-functions \cite{BridgelandOsc, BridgelandTau}. We remark that the A2-quiver Joyce structure is related to the isomonodromy flows of the Painlev\'e I equation. Finally, we note that the A1 case is easily extended to the case that the DT invariants are finite and uncoupled by the usual superposition of several A1 cases. 
\subsection{A1 quiver}
We take $M=\mathbb{C}^{*}\times \mathbb{C}$ with global coordinates $(z,\check{z})$, so that $TM$ has induced global coordinates $(z,\check{z},\theta,\check{\theta})$. $M$ carries the holomorphic symplectic form
\begin{equation*}
    \Omega=\frac{1}{2\pi \mathrm{i}}\mathrm{d}z\wedge \mathrm{d}\check{z}\,.
\end{equation*}
The period structure $(\Gamma,Z)$ is given by the trivial bundle of lattices $\Gamma\subset TM$ where $\Gamma|_{p}=\text{span}_{\mathbb{Z}}(\partial_{z}|_p,\partial_{\check{z}}|_p)$ and the vector field $Z=z\partial_z+\check{z}\partial_{\check{z}}$.  In particular, $(z,\check{z})$ are flat Darboux coordinates for $\Omega$ with respect to the linear flat connection $\nabla$ determined by $\Gamma$. The connection $h:\pi^*(TM)\to T(TM)$ is given in the global coordinates $(z,\check{z},\theta,\check{\theta})$ of $TM$ by
\begin{equation*}
    h_{\partial_z}=\partial_z+\frac{\theta}{2\pi \mathrm{i}z}\partial_{\check{\theta}}, \quad h_{\partial_{\check{z}}}=\partial_{\check{z}}\,.
\end{equation*}
The $\mathbb{C}^{*}$-family of flat symplectic connections $\mathcal{A}^{\epsilon}$ is then given by 
\begin{equation}
    \mathcal{A}_{\partial_{z}}^{\epsilon}=\partial_{z}+\frac{1}{\epsilon}\partial_{\theta} +\frac{\theta}{2\pi \mathrm{i}z}\partial_{\check{\theta}}, \quad \mathcal{A}_{\partial_{\check{z}}}^{\epsilon}=\partial_{\check{z}}+\frac{1}{\epsilon}\partial_{\check{\theta}}\,\,.
\end{equation}
It is easy to check that $(M,\Omega,\Gamma,Z,h)$ as above satisfies all the properties of a Joyce structure except the periodicity along the fibres (see (J2) of Definition \ref{Joyce_def}). Nevertheless, this property is never used in the proofs of Theorems \ref{mt1}, \ref{mt2} and \ref{mt3}, so our results still apply. For more details of how this pre-Joyce structure is obtained from a solution of a Riemann-Hilbert problem associated to the DT-invariants of the A1 quiver, see \cite{BridgeJoyce, varBPS}. \\

For this simple case there is a global Pleba\'nski function satisfying the uniqueness conditions \eqref{plebzerosec} given by
\begin{equation}\label{A1pleb}
    W=-\frac{\theta^3}{6(2\pi \mathrm{i})^2z}\,.
\end{equation}
Indeed, we have
\begin{equation*}
\begin{split}
    \text{Ham}^{\Omega^v}(v_{\partial_z}W)&=\text{Ham}^{\Omega^v}(\partial_{\theta}W)=\Omega^{z\check{z}}\partial^2_{\theta}W\partial_{\check{\theta}}=(-2\pi \mathrm{i})\cdot \left(-\frac{\theta}{(2\pi \mathrm{i})^2z}\right)\partial_{\check{\theta}}=\frac{\theta}{2\pi \mathrm{i}z}\partial_{\check{\theta}}\,\\
    \text{Ham}^{\Omega^v}(v_{\partial_{\check{z}}}W)&=\text{Ham}^{\Omega^v}(\partial_{\check{\theta}}W)=0\,.\\
\end{split}
\end{equation*}
We will now describe two  infinitesimal gauge transformations $\dot{g}\in \text{Lie}(\widehat{\mathcal{G}})$ satisfying
\begin{equation*}
e^{\mathcal{L}_{\dot{g}}}\mathcal{A}^{\text{st}}=\mathcal{A}.
\end{equation*}
The first one is the one from Theorem \ref{mt1} satisfying $\dot{g}|_M=0$, while the second one will be related to the solution of the Riemann-Hilbert problem associated to DT-invariants formulated in \cite{varBPS}.
\subsubsection{Infinitesimal gauge transformation vanishing on the zero-section}
Let us first explicitly construct $\dot{g}$ from Theorem \ref{mt1}. If we denote $(Z^1,Z^2,\theta^1,\theta^2)=(z,\check{z},\theta,\check{\theta})$, then by Proposition Proposition \ref{geq} we want to solve \begin{equation}\label{A1eq1}
    [\dot{g}_1,\partial_{\theta^i}]=\text{Ham}^{\Omega^{v}}(\partial_{\theta^i}W), \quad i=1,2,
\end{equation}
and for each $l>0$ and  $i=1,2$

\begin{equation}\label{A1eq2}
0=\sum_{r=1}^{l+1}\frac{1}{r!}\sum_{k_1+...+k_r=l+1}[\dot{g}_{k_1},...,[\dot{g}_{k_r},\partial_{\theta^i}]...] + \sum_{r=1}^{l}\frac{1}{r!}\sum_{k_1+...+k_r=l}[\dot{g}_{k_1},...,[\dot{g}_{k_r},\partial_{Z^i}],...]\,,    
\end{equation}
while at the same time ensuring the boundary condition $\dot{g}|_M=0$. We look for a solution of the form 
\begin{equation}\label{a1ansatz}
    \dot{g}=\sum_{k=1}^{\infty}a_k(z,\theta)\epsilon^{k}\partial_{\hat{\theta}}\,,
\end{equation}
where $a_k(z,\theta)$ are complex-valued functions of $(z,\theta)$. It is not hard to check that this form automatically satisfies \eqref{A1eq1} and \eqref{A1eq2} for $i=2$ since $W$ and the coefficients of $\dot{g}$ in \eqref{a1ansatz} depend only on $(z,\theta)$. Furthermore, the equations \eqref{A1eq1} and \eqref{A1eq2} for $i=1$ reduce to 
\begin{equation}\label{a1eq}
    \partial_{\theta}a_1=-\Omega^{z\hat{z}}\frac{\partial^2 W}{\partial \theta^2}=2\pi \mathrm{i}\frac{\partial^2 W}{\partial \theta^2}, \quad \partial_{\theta}a_{k+1}=-\partial_{z}a_k \quad \text{for} \quad k>0\,.
\end{equation}
It is easy to check that a solution of \eqref{a1eq} satisfying $a_k(z,0)=0$ is given by 
\begin{equation*}
   a_k(z,\theta)=-\frac{1}{2\pi \mathrm{i}}\frac{(k-1)!\theta^{k+1}}{(k+1)!z^k}\,.
\end{equation*}
The resulting infinitesimal gauge transformation 
\begin{equation}\label{a1firstsol}
    \dot{g}=-\frac{1}{2\pi \mathrm{i}}\left(\sum_{k=1}^{\infty}\frac{(k-1)!\theta^{k+1}}{(k+1)!z^k}\epsilon^k\right)\partial_{\hat{\theta}}
\end{equation}
clearly vanishes along the zero section of $TM\to M$, and hence describes the unique solution from Theorem \ref{mt1}. Note that infinite sum actually converges for $|\epsilon\theta/z|<1$ and admits the analytic continuation given by 
\begin{equation*}
    \dot{g} = -\frac{1}{2\pi \mathrm{i}}\left(\theta+\frac{z-\epsilon \theta}{\epsilon}\log\left(1-\frac{\epsilon \theta}{z}\right)\right)\partial_{\hat{\theta}}\,.
\end{equation*}
The fact that \eqref{a1firstsol} converges  implies that the Borel transform $\mathcal{B}[\dot{g}]$ has an infinite radius of convergence, and hence no interesting singularities.  \\

Note that in this case the formal Darboux coordinates are given on $TM\times \mathbb{C}^*$ by
\begin{equation*}
    x^1=e^{\dot{g}}x^{1,\text{st}}=x^{1,\text{st}}=\theta-\frac{z}{\epsilon}, \quad x^2=e^{\dot{g}}x^{2,\text{st}}=\check{\theta}-\frac{\check{z}}{\epsilon}-\frac{1}{2\pi \mathrm{i}}\left(\theta+\frac{z-\epsilon \theta}{\epsilon}\log\left(1-\frac{\epsilon \theta}{z}\right)\right)
\end{equation*}
and for fixed $\epsilon\in \mathbb{C}^{*}$ we have
\begin{equation}\label{hktwistora1}
    \Omega_{z\check{z}}\mathrm{d}x^1\wedge \mathrm{d}x^2= \frac{1}{2\pi \mathrm{i}\epsilon^2}\mathrm{dz}\wedge \mathrm{d}\check{z}-\frac{1}{2\pi \mathrm{i}\epsilon}\left(\mathrm{d}\theta \wedge \mathrm{d}\check{z}+\mathrm{d}z\wedge \mathrm{d}\check{\theta}\right)+\frac{1}{2\pi\mathrm{i}}\mathrm{d}\theta\wedge \mathrm{d}\check{\theta} -\frac{\theta}{(2\pi \mathrm{i})^2z}\mathrm{d}\theta \wedge \mathrm{d}z\,.
\end{equation}
In particular, from \eqref{hktwistora1} and \cite[Equation (24-26)]{TwistorJoyce} we see that if $q:TM\times \mathbb{P}^1\to \mathcal{Z}$ is the projection into the twistor space and $\varpi$ is the $\mathcal{O}(2)$-twisted relative symplectic form on $\mathcal{Z}$ then  
\begin{equation}\label{a1pullbacksymp}
    q^*\varpi|_{TM\times \{\epsilon\}} = \epsilon^2\Omega_{z\check{z}}\mathrm{d}x^1\wedge \mathrm{d}x^2 \otimes \partial_{\epsilon}\,.
\end{equation}
By Proposition \ref{flatcoordsprop} and \eqref{a1pullbacksymp} we conclude that $x^1$ and $x^2$ descend to twistor Darboux coordinates for $\varpi$. We remark that these are true analytic coordinates rather than formal Darboux coordinates, since $\dot{g}$ converges. 

\begin{remark}
Note that $(\epsilon x^1,\epsilon x^2)$ also descend to twistor Darboux coordinates, and they are well-defined near $\epsilon=0$. This is in contrast with the twistor coordinates obtained via solutions of the Riemann-Hilbert problem associated to DT-invariants \cite{varBPS}, which are usually defined for $\epsilon$ in a sector in $\mathbb{C}^*$.
\end{remark}

\subsubsection{Infinitesimal gauge transformation related to the Riemann-Hilbert problem}

We now construct a different $\dot{g}$ related to the Riemann-Hilbert problem studied in \cite{varBPS}. To do this, we first modify the global Pleba\'nski function  \eqref{A1pleb} by adding a term linear in $\theta$. We take
\begin{equation}
    \widetilde{W}=\frac{2\pi \mathrm{i}}{6z}B_3\left(\frac{\pi \mathrm{i}-\theta}{2\pi \mathrm{i}}\right)=-\frac{\theta^3}{6(2\pi \mathrm{i})^2z}+\frac{\theta}{24z}= W+\frac{\theta}{24z}
\end{equation}
where $B_n(w)$ denotes the n-th Bernoulli polynomial. While the addition of $\frac{\theta}{24z}$ seems ad-hoc at this moment, it is worthwhile to note that
\begin{equation*}
    \partial_{\theta}\widetilde{W}|_{M}=\partial_zS,\quad \text{where}\quad S(z)=\log(z^{1/24})\,
\end{equation*}
and compare with \cite[Equations (21) and (22)]{BridgeFab}. Note that 
\begin{equation*}
    \text{Ham}^{\Omega^{v}}(v_X\widetilde{W})=\text{Ham}^{\Omega^{v}}(v_XW)
\end{equation*}
so we can use $\widetilde{W}$ as another Pleba\'nski function for the A1 pre-Joyce structure, but $\widetilde{W}$ does not satisfy \eqref{plebzerosec}. \\

As before, we will find $\dot{g}$ satisfying \eqref{A1eq1} and \eqref{A1eq2}, but now $\dot{g}|_M\neq 0$.  We  prove that a solution is given by 
\begin{equation}\label{a1altsol}
\dot{g}=\sum_{k=1}^{\infty}\epsilon^k\text{Ham}^{\Omega^{v}}(W_k(z,\theta)), \quad W_k=\left(\frac{2\pi \mathrm{i}}{z}\right)^kB_{k+2}\left(\frac{\pi \mathrm{i}-\theta}{2\pi \mathrm{i}}\right)\frac{(-1)^{k}(k-1)!}{(k+2)!} \quad \text{for} \quad k\geq 1\,,
\end{equation}
where 
\begin{equation}\label{a1althams}
    \text{Ham}^{\Omega^{v}}(W_k(z,\theta))=-2\pi \mathrm{i}\frac{\partial W_k(z,\theta)}{\partial \theta}\partial_{\check{\theta}}\,.
\end{equation}  
As before, the fact that $\dot{g}$ is independent of the $(\check{z},\check{\theta})$ variables means that we only need to check
\begin{equation}\label{a1eqalt}
    \partial_{\theta}\tilde{a}_1=-\Omega^{z\hat{z}}\frac{\partial^2 W}{\partial \theta^2}=2\pi \mathrm{i}\frac{\partial^2 W}{\partial \theta^2}, \quad \partial_{\theta}\tilde{a}_{k+1}=-\partial_{z}\tilde{a}_k \quad \text{for} \quad k>0\,.
\end{equation}
where 
\begin{equation*}
    \tilde{a}_k(z,\theta):=-2\pi \mathrm{i}\frac{\partial W_k(z,\theta)}{\partial \theta}\,.
\end{equation*}
The equation for $\tilde{a}_1$ follows from the fact that $W_1=-\widetilde{W}$ and the fact that $\partial^2_\theta W= \partial^2_{\theta^2}\widetilde{W}$, while the other equations in \eqref{a1eqalt} follow from the relation 
\begin{equation}\label{berrel}
    B_{n+1}'(w)=(n+1)B_{n}(w)\,.
\end{equation}
The required equations for $\dot{g}$ from Proposition \ref{geq} then hold. Note that this $\dot{g}$ satisfies $\dot{g}|_M\neq 0$.\\

Using \eqref{berrel} and replacing \eqref{a1althams} into \eqref{a1altsol} we can write explicitly 
\begin{equation}\label{a1altsolexplicit}
\dot{g}=\left(\sum_{k=1}^{\infty}\left(\frac{2\pi \mathrm{i}\epsilon}{z}\right)^kB_{k+1}\left(\frac{\pi \mathrm{i}-\theta}{2\pi \mathrm{i}}\right)\frac{(-1)^{k}}{(k+1)k}\right)\partial_{\check{\theta}}\,.
\end{equation}
Note that compared to \eqref{a1firstsol}, the formal series \eqref{a1altsolexplicit} is divergent due to the factorial divergence of the Bernoulli polynomials with respect to $k$. The Borel transform 
\begin{equation*}
    \mathcal{B}[\dot{g}](\xi)=\left(\sum_{k=1}^{\infty}\left(\frac{2\pi \mathrm{i}}{z}\right)^kB_{k+1}\left(\frac{\pi \mathrm{i}-\theta}{2\pi \mathrm{i}}\right)\frac{(-1)^{k}}{(k+1)!}\xi^{k-1}\right)\partial_{\check{\theta}}\,
\end{equation*}
has radius of convergence in $\xi$ equal to $|z|$, and using the generating function for Bernoulli polynomials we see that it admits the analytic continuation
\begin{equation}\label{a1analyticcont}
    \mathcal{B}[\dot{g}](\xi)=\frac{1}{\xi}\left(\frac{z}{2\pi \mathrm{i}\xi}+\frac{\theta}{2\pi \mathrm{i}}+\frac{\exp(\xi(\pi i + \theta)/z)}{1-\exp(2\pi i \xi/z) }\right)\partial_{\check{\theta}},
\end{equation}
with simple poles along $nz$ with $n\in \mathbb{Z} -\{0\}$. The location of the poles and the analytic continuation \eqref{a1analyticcont} of $\mathcal{B}[\dot{g}]$ shows that $\dot{g}$ is resurgent. \\

\begin{remark}
    \begin{itemize}
        \item The fact that the radius of convergence of $\mathcal{B}[\dot{g}](\xi)$ is independent of $\theta$ is a reflection of the statement of Theorem \ref{mt2}, where the convergence of $\mathcal{B}[\dot{g}]$ depends on the convergence of $\mathcal{B}[\dot{g}]|_M$.
        \item One can check that the Stokes automorphisms associated to the resurgent $\dot{g}$ are given by the expressions
        \begin{equation}\label{A1Stokes}
        \begin{split}
            \log(1+e^{\theta-z/\epsilon})&, \quad \text{along} \quad \mathbb{R}_{>0}\cdot z\\
            -\log(1+e^{z/\epsilon-\theta})&, \quad \text{along} \quad \mathbb{R}_{>0}\cdot (-z)\,.\\
        \end{split}
        \end{equation}
        These reproduce the logarithms of the jumps of the Riemann-Hilbert problem of the DT-invariants of the A1 quiver discussed in \cite{varBPS}, after the appropriate choice of quadratic refinement. 
    \end{itemize}
\end{remark}

We now use $\dot{g}$ to compute the formal twistor Darboux coordinates from Section \ref{formaldarbouxcoordsec}. We obtain 
\begin{equation}\label{a1formalDarboux}
    x^1=e^{\dot{g}}x^{1,\text{st}}=\theta-\frac{z}{\epsilon}, \quad x^2=e^{\dot{g}}x^{2,\text{st}}=\check{\theta}-\frac{\check{z}}{\epsilon}+\left(\sum_{k=1}^{\infty}\left(\frac{2\pi \mathrm{i}\epsilon}{z}\right)^kB_{k+1}\left(\frac{\pi \mathrm{i}-\theta}{2\pi \mathrm{i}}\right)\frac{(-1)^{k}}{(k+1)k}\right)\,.
\end{equation}
The formulas in \eqref{a1formalDarboux} match the asymptotic expansion as $\epsilon\to 0$ of the logarithm of the solution of the Riemann-Hilbert problem associated to the DT invariants of the A1 quiver (see \cite[Section 8.2]{BridgeJoyce}). Since $x^2-x^{2,\text{st}}$ is the $\partial_{\check{\theta}}$ component of $\dot{g}$, it follows from the previous arguments that $x^2-x^{2,\text{st}}$ is resurgent  and its associated Stokes automorphisms are the expected ones from \eqref{A1Stokes}. \\

Finally, note that even though the expression for $x^2$ is a formal expression in $\epsilon$ with infinitely many non-trivial terms, the combination $\Omega_{z\check{z}}\mathrm{d}x^1\wedge \mathrm{d}x^2$ still satisfies \eqref{hktwistora1}, with all contributions to $\epsilon^k$-order with $k\geq 1$ vanishing.\\

We summarize all the statements from this subsection in the following proposition.

\begin{proposition} For the A1 pre-Joyce structure, $\dot{g}\in \text{Lie}(\widehat{\mathcal{G}})$ given by \eqref{a1altsolexplicit} satisfies $e^{\mathcal{L}_{\dot{g}}}\mathcal{A}^{\text{st}}=\mathcal{A}$ and is resurgent. The corresponding formal Darboux coordinates $x^i=e^{\dot{g}}x^{i,\text{st}}$
satisfy that $x^i-x^{i,\text{st}}$ is resurgent, and the Stokes automorphisms match the logarithm of the jumps of the Riemann-Hilbert problem associated to the DT invariants of the A1  quiver. 
    
\end{proposition}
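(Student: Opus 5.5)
The proposition collects three claims, and I would verify each from the explicit formulas of this subsection.

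\emph{Gauge equation.} By Proposition \ref{geq} it is enough to check \eqref{firstrec} and \eqref{receq} in the flat Darboux coordinates $(z,\check{z})$. Every coefficient of $\dot{g}$ in \eqref{a1altsolexplicit} is a multiple of $\partial_{\check{\theta}}$ with a coefficient depending only on $(z,\theta)$. Hence all brackets $[\dot{g}_{k_1},\dot{g}_{k_2}]$ vanish. The iterated brackets $[\dot{g}_{k_1},[\dot{g}_{k_2},v_i]]$ and $[\dot{g}_{k_1},[\dot{g}_{k_2},\partial_{Z^i}]]$ also vanish, because $[\dot{g}_k,\partial_\theta]$ and $[\dot{g}_k,\partial_z]$ are again multiples of $\partial_{\check{\theta}}$ with $(z,\theta)$-dependent coefficients, and $\partial_{\check{\theta}}$ annihilates such coefficients. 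The case $i=2$ is trivial.

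So only the linear terms $r=1$ survive in \eqref{receq}, and the system reduces to the scalar recursion \eqref{a1eqalt}:
\[
\partial_\theta\tilde a_1=2\pi\mathrm{i}\,\partial_\theta^2W,\qquad \partial_\theta\tilde a_{k+1}=-\partial_z\tilde a_k .
\]
I would check this directly from $\tilde a_k=(2\pi\mathrm{i}/z)^k B_{k+1}\!\left(\frac{\pi\mathrm{i}-\theta}{2\pi\mathrm{i}}\right)\frac{(-1)^k}{k(k+1)}$. Differentiating the Bernoulli polynomial in $\theta$ with \eqref{berrel} produces a factor $-(k+2)/(2\pi\mathrm{i})$. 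Differentiating in $z$ produces $-k/z$. The normalizations then match, and the case $k=1$ reduces to $\partial^2_\theta\widetilde W=\partial^2_\theta W$.

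\emph{Resurgence of $\dot{g}$.} From the Borel transform coefficients I would use the generating function
\[
\sum_n B_n(w)\frac{t^n}{n!}=\frac{te^{wt}}{e^t-1}\quad\text{with}\quad t=-2\pi\mathrm{i}\xi/z .
\]
After subtracting the $n=0,1$ terms and dividing by $\xi^2$, this gives the closed form \eqref{a1analyticcont}. The constant terms must be checked to cancel, so that the expression is regular at $\xi=0$. The result is meromorphic in $\xi$ with at most simple poles at $\xi\in z\mathbb{Z}\setminus\{0\}$. Its growth is at most exponential of linear order in any direction avoiding $\mathbb{R}z$. Together these give endless continuation with the closed discrete set $S=z\mathbb{Z}\setminus\{0\}$, so $\dot{g}$ is resurgent.

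\emph{Darboux coordinates and Stokes automorphisms.} Since $\dot{g}$ only differentiates in $\check\theta$ with coefficients independent of $\check\theta$, the series for $x^i$ truncates. We get $x^1=x^{1,\text{st}}$, because $\dot{g}(\theta)=0$, and $x^2-x^{2,\text{st}}=\dot{g}(\check\theta)$, which is exactly the $\partial_{\check\theta}$-component of $\dot{g}$. Both are therefore resurgent, the first trivially.

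For the Stokes data, I would compute the difference of Laplace transforms $\int_\rho e^{-\eta/\epsilon}\mathcal{B}[\dot{g}](\eta)\,\mathrm{d}\eta$ on either side of the ray $\mathbb{R}_{>0}z$ by residues. At $\eta=nz$ the residue is proportional to
\[
\frac{e^{nz(\pi\mathrm{i}+\theta)/z}\,e^{-nz/\epsilon}}{n}=\frac{(-1)^n e^{n(\theta-z/\epsilon)}}{n}.
\]
Summing over $n\ge1$ gives the series $\sum_{n\ge1}\frac{(-1)^{n+1}}{n}e^{n(\theta-z/\epsilon)}=\log(1+e^{\theta-z/\epsilon})$, up to sign and the factor $2\pi\mathrm{i}$. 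The ray $-\mathbb{R}_{>0}z$ is treated in the same way, giving $-\log(1+e^{z/\epsilon-\theta})$. Comparing with the jumps of the A1 Riemann--Hilbert problem in \cite{varBPS} for the corresponding quadratic refinement, the sign $(-1)^n$ accounts for the refinement, and this completes the identification.

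The main obstacle is this last step, where signs and orientation conventions must be tracked carefully. I would fix them by checking the $n=1$ residue against the known asymptotics in \cite[Section 8.2]{BridgeJoyce}.
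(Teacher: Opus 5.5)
Your proposal is correct and follows the paper's route. You reduce the gauge equation via Proposition~\ref{geq} to the scalar recursion \eqref{a1eqalt} and verify it with \eqref{berrel}. You use the Bernoulli generating function to obtain the meromorphic continuation \eqref{a1analyticcont}, with simple poles on $z\mathbb{Z}\setminus\{0\}$. You then use the truncation $x^2-x^{2,\text{st}}=\dot{g}(\check{\theta})$.

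Your residue computation of the Stokes jumps fills in a step the paper only asserts (``one can check''), and it works out with no leftover sign or factor of $2\pi\mathrm{i}$. Take the Laplace transform on the clockwise-adjacent ray minus the one on the counterclockwise-adjacent ray. Then $2\pi\mathrm{i}$ times the residue at $\eta=nz$ is $\frac{(-1)^{n+1}}{n}e^{n(\theta-z/\epsilon)}$, which sums to exactly $\log(1+e^{\theta-z/\epsilon})$. The same computation along $\mathbb{R}_{>0}\cdot(-z)$ gives $-\log(1+e^{z/\epsilon-\theta})$, matching \eqref{A1Stokes}.
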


\subsection{A2 quiver}

Let's start by describing the tuple $(M,\Omega,\Gamma,Z,h)$ of the Joyce structure. For more details, see \cite{BridgelandOsc} and \cite[Section  9]{TwistorJoyce}. \\

The base $M$ of the Joyce structure is given by

\begin{equation*}
    M=\{(a,b)\in \mathbb{C}^2 \; | \; 4a^3+27b^2\neq 0\}\,.
\end{equation*}
A point in $M$
 should be thought as defining a quadratic differential $(x^3+ax+b)(\mathrm{d}x)^2$ on $\mathbb{P}^1$ with simple zeroes and an order $7$ pole at $x=\infty$. In order to define $\Gamma \to M$, first consider the affine elliptic curve associated to a quadratic differential on $M$:
 \begin{equation}
     \Sigma^{0}(a,b)=\{(x,y)\in \mathbb{C}^2 \; | \; y^2=x^3+ax+b\}\,.
 \end{equation}
 The corresponding projective elliptic curve is denoted by $\Sigma(a,b)\subset \mathbb{P}^2$. The bundle $\Gamma\to M$ whose fibers are given by the first homology $\Gamma|_{(a,b)}=H_1(\Sigma(a,b),\mathbb{Z})$ has a natural flat structure, and picking a flat frame $\gamma_1$, $\gamma_2$ with intersection pairing $\gamma_1\cdot \gamma_2=1$, we can define coordinates $(z_1,z_2)$ on $M$
 by  
 \begin{equation*}
    z^i=\int_{\gamma_i}y\mathrm{d}x\,.  
 \end{equation*}
The coordinates $(z^1,z^2)$ are flat with respect to the flat structure induced by $\Gamma$, and the vector field $Z$ is given by \begin{equation*}
    Z=z^1\partial_{z^1}+z^2\partial_{z^2}\,.
\end{equation*} The symplectic structure $\Omega$ on $M$ is given by
\begin{equation}\label{sympconv}
    \Omega=-\mathrm{d}z^1\wedge \mathrm{d}z^2=2\pi \mathrm{i}\cdot \mathrm{d}a\wedge \mathrm{d}b\,.
\end{equation}
Finally, $h$ is the most interesting part and is built using the isomonodromy flows of the Painlev\'e I equation. In order to describe it explicitly in terms of an algebraic expression for the Pleba\'nski function, we consider several coordinates on $TM$.\\

On one hand, we have the natural induced coordinates $(z^1,z^2,\theta^1,\theta^2)$ on $TM$ induced from $(z^1,z^2)$, where the Joyce structure has the usual form for some Pleba\'nski function $W$. On the other hand,  in order to describe $W$ it is better to introduce the coordinates $(a,b,q,r)$. The relation between $(z^1,z^2,\theta^1,\theta^2)$ and $(a,b,q,r)$ is given as follows. First we introduce the periods $\omega_i$ and quasi-periods $\eta_i$ of $\Sigma(a,b)$ given by 
\begin{equation*}
\eta_i=-\frac{\partial z^i}{\partial a}, \quad \omega_i=\frac{\partial z^i}{\partial b}.
\end{equation*}
Note that they satisfy the relation $\omega_2\eta_1-\omega_1\eta_2=2\pi \mathrm{i}$. Next, if $(q,p)\in \Sigma^{0}(a,b)$ (so that $p^2=q^3+aq+b$), we define
\begin{equation*}
\theta_a:=-\frac{1}{4}\int_{(q,-p)}^{(q,p)}\frac{\mathrm{d}x}{y}, \quad \theta_b:=\frac{1}{4}\int_{(q,-p)}^{(q,p)}\frac{x\mathrm{d}x}{y}-r
\end{equation*}
where the integrals are along a path in $\Sigma^0(a,b)$ invariant under the involution $(x,y)\to (x,-y)$. The relation between $(\theta^1,\theta^2)$ and $(a,b,q,r)$ is then given by
\begin{equation*}
    \theta^i=-\eta_i\theta_a+\omega_i\theta_b\,.
\end{equation*}
 In the coordinates $(a,b,q,r)$ the Pleba\'nski function is then given by
\begin{equation}\label{A2pleb}
    \frac{1}{2\pi \mathrm{i}}W(a,b,q,r)=-\frac{2ap^2 +3pr(3b-2aq) + (6aq^2-9bq+4a^2)r^2 -2apr^3}{4p \Delta}, \quad \Delta=4a^3+27b^2
\end{equation}
but note that this $W$ does not satisfy the normalization conditions \eqref{plebzerosec}. For future reference, we record the relation between the coordinate vector fields on $TM$ induced by $(z^1,z^2,\theta^1,\theta^2)$ and $(a,b,q,r)$. They are given by 
\begin{equation}
\begin{split}
    \partial_{z^1}=-\frac{1}{2\pi \mathrm{i}}\left(\omega_2\partial_{a}+\eta_2\partial_b\right),& \quad \partial_{z^2}=\frac{1}{2\pi \mathrm{i}}\left(\omega_1\partial_{a}+\eta_1\partial_b\right)\\
    \partial_{\theta^1}=\frac{1}{2\pi \mathrm{i}}\left(\omega_2\left(2p \partial_q + q\partial_r\right)+\eta_2\partial_r\right), & \quad \partial_{\theta^2}=-\frac{1}{2\pi \mathrm{i}}\left(\omega_1\left(2p \partial_q + q\partial_r\right)+\eta_1\partial_r\right)\\
\end{split}
\end{equation}
On the other hand, the relation between $(a,b,q,r)$ and $(a,b,\theta_a,\theta_b)$ is given by
\begin{equation}\label{usefulid}
    \partial_{\theta_a}=-2p\partial_{q}-q\partial_r, \quad \partial_{\theta_b}=-\partial_r\,.
\end{equation}
and 
\begin{equation}\label{usefulid2}
    \partial_q=-\frac{1}{2p}\partial_{\theta_a}+\frac{q}{2p}\partial_{\theta_b}\,.
\end{equation}

\subsubsection{Computing $\dot{g}_2$}

Recall from Remark \ref{g1hamrem}  that if we wish to solve the equations from Proposition \ref{receq} for $\dot{g}=\sum_{k=1}^{\infty}\dot{g}_k\epsilon^k$, we can take $\dot{g}_1=-\text{Ham}^{\Omega^{v}}(W)$, where $W$ in the A2 case is given by \eqref{A2pleb}. We will now explicitly compute $W_2$ giving $g_2=\text{Ham}^{\Omega^{v}}({W_2})$.\\

Suppose that we pick $\nabla$-flat Darboux coordinates $(Z^1,Z^2)$ on $M$ (not necessarily equal to the previous $(z^1,z^2)$ given before), where
\begin{equation*}
\Omega=\Omega_{ij}\mathrm{d}Z^i\wedge \mathrm{d}Z^j
\end{equation*}
for some constant matrix $\Omega_{ij}$. With respect to the induced coordinates $(Z^i,\theta^i)$ on $TM$, the equation that we wish to solve is (recall \eqref{firstreceq}) 

\begin{equation}\label{maing2eq}
        [\partial_{\theta^i},\dot{g}_2]=[\dot{g}_1,\partial_{Z^i}]+\frac{1}{2}[\dot{g_1},[\dot{g_1},\partial_{\theta^i}]].
\end{equation}
Suppose now that $W^i$ are Darboux coordinates on $M$ that are not $\nabla$-flat. Namely, we can still write
\begin{equation*}
    \Omega=\widetilde{\Omega}_{ij}\mathrm{d}W^i\wedge \mathrm{d}W^j
\end{equation*}
for some other constant matrix $\widetilde{\Omega}_{ij}$ but $\nabla (\mathrm{d}W^i)\neq 0$. In the induced coordinates $(W^i,\phi^i)$, on $TM$, the equation \eqref{maing2eq} is now rewritten as
\begin{equation}\label{g2eqothercoords}
        [\partial_{\phi^i},\dot{g}_2]=[\dot{g}_1,\mathcal{H}_{\partial_{W^i}}]+\frac{1}{2}[\dot{g_1},[\dot{g_1},\partial_{\phi^i}]]\,,
\end{equation}
where now $\mathcal{H}_{\partial_{W^i}}\neq \partial_{W^i}$. Indeed, while the $Z^i$ are flat coordinates for $\mathcal{H}$, so that $\mathcal{H}_{\partial_{Z^i}}=\partial_{Z^i}$, the $W^i$ are not flat and instead we have
\begin{equation}
    \mathcal{H}_{\partial_{W^i}}=\partial_{W^i}+f_i^q(W,\phi)\partial_{\phi^q}\,.
\end{equation}
In the coordinates $(W^1,W^2,\phi^1,\phi^2)=(a,b,\theta_a,\theta_b)$ the functions $f(W,\phi)$ can be computed using similar computations to \cite[Section 4.2]{BridgelandOsc}, where we find that 
\begin{equation}\label{horliftid}
    \mathcal{H}_{\partial_{W^i}}=\partial_{W^i}+\widetilde{\Omega}^{jq}C_{ijk}(w)\phi^k\partial_{\phi^q}\,
\end{equation}
where $\widetilde{\Omega}^{ij}$ is the inverse matrix of $\widetilde{\Omega}_{ij}$ and $C_{ijk}(W)$ are the coefficients of the cubic polynomial from \cite[Equation (66)]{BridgelandOsc} given by
\begin{equation}\label{cubicpol}
\frac{1}{2\pi \mathrm{i}}C(W,\phi)=\frac{1}{6}C_{ijk}(W)\phi^i\phi^j\phi^k=\frac{1}{4\Delta}\left(ab\theta_a^3-2a^2\theta_a^2\theta_b-9b\theta_a\theta_b^2+2a\theta_b^3\right)\,.
\end{equation}
With our choice of symplectic form \eqref{sympconv} and \eqref{cubicpol} and renaming the indices to $(1,2)=(a,b)$ we have
\begin{equation}\label{usefulid3}
    \widetilde{\Omega}^{ab}=-\frac{1}{2\pi \mathrm{i}}, \quad \frac{1}{2\pi \mathrm{i}}C_{aaa}=\frac{3ab}{2\Delta}, \quad \frac{1}{2\pi \mathrm{i}}C_{aab}=-\frac{a^2}{\Delta}, \quad \frac{1}{2\pi \mathrm{i}}C_{abb}=-\frac{9b}{2\Delta}, \quad \frac{1}{2\pi \mathrm{i}}C_{bbb}=\frac{3a}{\Delta}\,
\end{equation}
where we used that $C_{ijk}$ is fully symmetric. \\

We now have all the ingredients to substitute into equation \eqref{g2eqothercoords} and find $W_2$ such that $\dot{g}_2=\text{Ham}^{\Omega^{v}}(W_2)$ satisfies \eqref{maing2eq}. Before we do so, let us motivate a bit the ansatz that we use to find $W_2$.\\

Recall (J4) from Definition \ref{Joyce_def} where we consider the involution $\iota:TM\to TM$ acting by $-1$ on the fibres of $\pi:TM\to M$. There are two natural ways to lift the involution to $\iota:TM\times \mathbb{C}\to TM\times \mathbb{C}$ by acting by $\pm 1$ on the $\mathbb{C}$ factor. The choice of $\iota(X,\epsilon)=(-X,-\epsilon)$ leaves the relative connection $\mathcal{A}$ of the Joyce structure invariant, so we make the $-1$ choice. From the equation $e^{\mathcal{L}_{\dot{g}}}\mathcal{A}^{\text{st}}=\mathcal{A}$ it is then easy to see that $\dot{g}$ must be invariant under $\iota$, and hence the components in the $\epsilon$-expansion satisfy that  $\dot{g}_{2n}$ (resp. $\dot{g}_{2n+1}$) is invariant (resp. odd) under the involution.\\

On the other hand, note that in the $(a,b,q,r)$ coordinates, the formula for $W$ in \eqref{A2pleb} has the form
\begin{equation*}
    W=\sum_{k=0}^{3}W_{k}(a,b,q)r^k
\end{equation*}
so it is natural to assume a similar form 
\begin{equation*}
    W_2=\sum_{k=0}^mW_{2,k}(a,b,q)r^k
\end{equation*}
The involution $\iota(z^1,z^2,\theta^1,\theta^2)=(z^1,z^2,-\theta^1,-\theta^2)$ translates into \cite[Remark 3.5]{BridgelandOsc}
\begin{equation*}
    (a,b,q,p,r)\to (a,b,q,-p,-r), \quad \text{where} \quad p^2=q^3+aq+b\,,
\end{equation*}
so if we look for a $W_2$ invariant under $\iota$ (so that $\dot{g}_2$ is invariant), it follows that $W_{2,k}$ is invariant for even $k$ and odd for odd $k$. With this in mind, we now show:

\begin{proposition}\label{w2prop}
    There exists $W_2$ invariant under the involution $\iota$ such that
    \begin{equation*}
        \dot{g}_2=\text{Ham}^{\Omega^{v}}(W_2)
    \end{equation*}
    satisfies \eqref{g2eqothercoords}. In the coordinates $(a,b,q,r)$ it has the form
    \begin{equation}\label{w2ansatz}
        \frac{1}{2\pi \mathrm{i}}W_2(a,b,q,r)=\sum_{k=0}^{4}W_{2,i}(a,b,q)r^k
    \end{equation}
    where 
    \begin{equation}\label{w2compexplicit}
        \begin{split}
        W_{2,0}&=\frac{27abq^2}{4\Delta^2}+\frac{11q}{32\Delta}-\frac{3a^3q}{\Delta^2}\\
        W_{2,1}&=\frac{a}{8\Delta p}-\frac{27ab p}{\Delta^2}\\
        W_{2,2}&=\frac{1}{16\Delta p^2}\left(-3aq+\frac{9b}{2}\right)+\frac{3\cdot 27abq}{2\Delta^2}+\frac{3\cdot 27(2a^3-11b^2)}{32\Delta^2}\\
        W_{2,3}&=\frac{1}{12\Delta p^3}\left(\frac{3aq^2}{2}-\frac{9bq}{4}+a^2\right)+\frac{1}{\Delta^2p}\left(-27abq^2+\left(-\frac{13\Delta}{16}+\frac{81b^2}{2}\right)q-18a^2b\right)\\
        W_{2,4}&=\frac{3^4ab}{2^4\Delta^2}-\frac{3a}{2^4\Delta p^2}\\
    \end{split}
    \end{equation}
\end{proposition}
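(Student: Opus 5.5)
The plan is to turn \eqref{g2eqothercoords} into a scalar PDE for $W_2$, fix the function that PDE determines, and then check that the explicit expression \eqref{w2ansatz}--\eqref{w2compexplicit} satisfies it.

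\textbf{Reduction to a scalar problem.} Everything is Hamiltonian for $\Omega^v$. We have $\dot g_1=-\text{Ham}^{\Omega^v}(W)$, and the vertical correction in \eqref{horliftid} is the Hamiltonian vector field of the fibrewise cubic
\[
\tfrac12 C_{ijk}\phi^j\phi^k = \partial_{\phi^i}C,
\]
up to sign conventions for $\widetilde{\Omega}^{jq}$. Within vertical vector fields that are fibrewise Hamiltonian, the bracket corresponds to the Poisson bracket $\{\cdot,\cdot\}$ of $\Omega^v$. The exception is the bracket with $\partial_{W^i}$, which differentiates the coefficients. With these rules the right-hand side of \eqref{g2eqothercoords} becomes
\[
\text{Ham}^{\Omega^v}\Big(\partial_{W^i}W+\{W,\partial_{\phi^i}C\}+\tfrac12\{W,\{W,\partial_{\phi^i}W\}\}\Big),
\]
with signs to be tracked carefully. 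The left-hand side is $\text{Ham}^{\Omega^v}(\partial_{\phi^i}W_2)$. It therefore suffices to solve
\[
\partial_{\phi^i}W_2 = F_i := \partial_{W^i}W+\{W,\partial_{\phi^i}C\}+\tfrac12\{W,\{W,\partial_{\phi^i}W\}\},
\]
modulo functions pulled back from $M$, since those have zero Hamiltonian vector field. The compatibility condition $\partial_{\phi^j}F_i=\partial_{\phi^i}F_j$ modulo base functions is guaranteed by the flatness argument in the proof of Theorem \ref{mt1} (via Lemma \ref{flatgaugelemma}), so a solution exists. It is unique up to an affine function of the fibre variables, and the $\iota$-invariance requirement removes the odd part of that ambiguity.

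\textbf{Verification in $(a,b,q,r)$ coordinates.} Here we convert $\partial_{\theta_a},\partial_{\theta_b}$ to $\partial_q,\partial_r$ using \eqref{usefulid} and \eqref{usefulid2}. The Poisson bracket of $\Omega^v$ in these coordinates is a constant multiple of
\[
\partial_{\theta_a}f\,\partial_{\theta_b}g-\partial_{\theta_b}f\,\partial_{\theta_a}g .
\]
We will treat $p=\sqrt{q^3+aq+b}$ as a function of $(a,b,q)$, with $\partial_q p=(3q^2+a)/(2p)$ and $\partial_a p = q/(2p)$, $\partial_b p=1/(2p)$. One subtlety is that $\partial_a,\partial_b$ at fixed $(\theta_a,\theta_b)$ differ from $\partial_a,\partial_b$ at fixed $(q,r)$. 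The chain-rule correction comes from $\theta_a,\theta_b$ depending on $(a,b,q,r)$ through elliptic integrals. To avoid those integrals, we will write $\partial_{W^i}$ at fixed $\phi$ as $\partial_{a}|_{q,r}$ minus a vertical vector field whose coefficients are obtained by differentiating the integral formulas for $\theta_a,\theta_b$. These derivatives reduce to algebraic expressions plus terms linear in $\theta_a,\theta_b$, and we expect the latter to cancel against the cubic $C$ terms, exactly as in \cite[Section 4.2]{BridgelandOsc}. After this step $F_i$ is an algebraic function, polynomial in $r$ of degree at most $4$ with coefficients in $\mathbb{C}(a,b,q,p)$.

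\textbf{Integration and comparison.} With the ansatz \eqref{w2ansatz}, where even-$k$ coefficients are even in $p$ and odd-$k$ coefficients are odd, the equation $\partial_{\theta_b}W_2=-\partial_rW_2=F_b$ determines $W_{2,1},\dots,W_{2,4}$ by matching powers of $r$. The equation $\partial_{\theta_a}W_2=F_a$ then fixes $W_{2,0}$ up to a function on $M$, and provides the consistency checks. We will check the identities coefficient by coefficient in $r$, clearing denominators $p^k\Delta^2$ and reducing with $p^2=q^3+aq+b$.

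\textbf{Main obstacle.} The hard part is computational: correctly tracking the non-flat horizontal lift and the sign conventions relating $\Omega$, $\widetilde\Omega^{ab}=-1/(2\pi\mathrm{i})$ and $\text{Ham}$. The double Poisson bracket $\{W,\{W,\cdot\}\}$ generates high-degree terms in $r$ and $1/p$. I would first check the top-degree coefficient $W_{2,4}$, which is the simplest, to calibrate the signs. After that I would do the remaining polynomial reductions with computer algebra.
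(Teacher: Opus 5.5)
There is a concrete error in your reduction formula, and with it the verification would fail. The last term should be a single Poisson bracket, not a double one.

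Because $\Omega^v$ has constant coefficients in the $\phi^i$, one bracket with $W$ is already spent in
\[
[\dot g_1,\partial_{\phi^i}]=[\partial_{\phi^i},\text{Ham}^{\Omega^v}(W)]=\text{Ham}^{\Omega^v}(\partial_{\phi^i}W).
\]
Hence
\begin{equation*}
\tfrac12[\dot g_1,[\dot g_1,\partial_{\phi^i}]]=-\tfrac12[\text{Ham}^{\Omega^v}(W),\text{Ham}^{\Omega^v}(\partial_{\phi^i}W)]=\pm\tfrac12\,\text{Ham}^{\Omega^v}(\{W,\partial_{\phi^i}W\}).
\end{equation*}
This is quadratic in $W$, and the last two lines of \eqref{w2eq1} are exactly its components. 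Your term $\tfrac12\{W,\{W,\partial_{\phi^i}W\}\}$ is cubic in $W$, whereas the right-hand side of \eqref{g2eqothercoords} is at most quadratic in $\dot g_1$.

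This is not a sign ambiguity that calibrating on $W_{2,4}$ could absorb. $E$ is a derivation of $\{\cdot,\cdot\}$, $EW=-W$, and the explicit $W_2$ has $E$-weight $-2$ (cf. \eqref{homcond}). So the double bracket has weight one lower than $\partial_{\phi^i}W_2$ and every other term, and the stated $W_2$ cannot satisfy the equation as you wrote it.

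The correct scalar equation is, modulo functions pulled back from $M$,
\[
\partial_{\phi^i}W_2=\mathcal{H}_{\partial_{W^i}}W\pm\tfrac12\{W,\partial_{\phi^i}W\}.
\]
Here $\mathcal{H}_{\partial_{W^i}}W=\partial_{W^i}W\pm\{W,\partial_{\phi^i}C\}$ is just your first two terms combined. The cancellation of $\theta$-linear pieces you anticipate then amounts to expressing $\mathcal{H}_{\partial_a},\mathcal{H}_{\partial_b}$ in $(a,b,q,r)$ coordinates.

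Your existence argument also fails as stated. Flatness (via Lemma \ref{flatgaugelemma}) only gives $\text{Ham}^{\Omega^v}(\partial_{\phi^j}F_i-\partial_{\phi^i}F_j)=0$, so the antisymmetric part is merely a function on $M$. Adding base functions to the $F_i$ does not change it. A potential $W_2$ requires this antisymmetric part to vanish identically, which is the extra compatibility the paper attributes, without details, to the Pleba\'nski equations. Since you are verifying an explicit $W_2$, existence follows from the check itself, so you can simply drop this step.

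Once the bracket is fixed, your route differs usefully from the appendix. You would test the first-order equations directly: $\partial_{\theta_a}W_2-F_a$ and $\partial_{\theta_b}W_2-F_b$ should be pulled back from $M$. The paper instead works with second derivatives of $W_2$ (the components of \eqref{w2eq1} and \eqref{w2eq11}). It differentiates twice more to isolate one $r$-coefficient at a time, then integrates back using $\iota$-parity and choices of $c(a,b)$, $d(a,b)$, $e(a,b)$.

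The paper's route shows how the formulas are found. Yours loses nothing to differentiation, so it would also certify that those choices are consistent with the undifferentiated equations.
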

\begin{proof}
Due to the long and complicated expressions that appear, we leave the details of the proof in the Appendix \ref{A1}.
\end{proof}

\begin{remark}
\begin{itemize}
\item Note that \eqref{w2compexplicit} and \eqref{w2ansatz} show that $W_2$ is an algebraic function of $(a,b,q,r)$, which is also true for the Pleba\'nski function $W$. 
\item One could use $\dot{g}_1=-\text{Ham}^{\Omega^{v}}(W)$ and $\dot{g}_2=\text{Ham}^{\Omega^{v}}(W_2)$ to compute expansions of the formal twistor coordinates $(x^1,x^2)$ for the Joyce structure of the A2 quiver up to $\epsilon^2$-order. It would be interesting to check if they provide an asymptotic approximation as $\epsilon \to 0$ of the analytic twistor coordinates coming from the solution of the associated Riemann-Hilbert problem in \cite{BridgelandOsc}.
\item In \cite[Section 8.3]{BridgelandTau} it is mentioned that a certain restriction of the $\tau$-function associated to the Joyce structure of the A2 quiver coincides with the Painlev\'e I $\tau$-function. In \cite[Equation 8.4]{BridgelandTau} the formula for the Joyce structure $\tau$-function is given in terms of the 1-form $x^1\mathrm{d}x^2$, where the exterior derivative does not differentiate in $\epsilon$. Hence, one could use the previous expansion of the $x^i$ up to $\epsilon^2$-order to compute an expansion of the $\tau$-function. 
\end{itemize}
\end{remark}

\appendix

\section{Details on the computation of $\dot{g}_2$}\label{A1}

In this appendix with prove Proposition \ref{w2prop}. Throughout the computation we make extensive use of \eqref{usefulid}, \eqref{usefulid2}, and \eqref{usefulid3} and switch back and forth between the coordinates $(a,b,\theta_a,\theta_b)$ and $(a,b,q,r)$. \\

Using that $\dot{g}_1=-\text{Ham}^{\Omega^{v}}(W)$, assuming that $\dot{g}_2$ has Hamiltonian form $\dot{g}_2=\text{Ham}^{\Omega^{v}}(W_2)$, and using \eqref{horliftid} with $(W^1,W^2,\phi^1,\phi^2)=(a,b,\theta_a,\theta_b)$, we can rewrite \eqref{maing2eq} evaluated on $\partial_{W^2}=\partial_b$ and $\partial_{\phi^2}=\partial_{\theta_b}$ as follows

\begin{equation}\label{w2eq1}
    \begin{split}
        \frac{1}{2\pi \mathrm{i}}&\left(\frac{\partial^2W_2}{\partial\theta_b^2}\frac{\partial}{\partial \theta_a}-\frac{\partial^2 W_2}{\partial \theta_a\partial\theta_b}\frac{\partial}{\partial \theta_b}\right)\\
        =&\frac{1}{2\pi \mathrm{i}}\left(\frac{\partial^2W}{\partial b\partial\theta_b}\frac{\partial}{\partial \theta_a}-\frac{\partial^2 W}{\partial b\partial \theta_a}\frac{\partial}{\partial \theta_b}\right)\\
        &+\frac{1}{(2\pi\mathrm{i})^2}\left(\frac{\partial W}{\partial \theta_a}C_{bbb}-\frac{\partial W}{\partial \theta_b}C_{bba}+C_{bbk}\theta^k\frac{\partial^2W}{\partial \theta_a\partial \theta_b}-C_{bak}\theta^k\frac{\partial^2W}{\partial \theta_b^2}\right)\frac{\partial}{\partial \theta_a}\\
        &+\frac{1}{(2\pi\mathrm{i})^2}\left(-\frac{\partial W}{\partial \theta_a}C_{abb}+\frac{\partial W}{\partial \theta_b}C_{aab}+C_{abk}\theta^k\frac{\partial^2W}{\partial \theta_a\partial \theta_b}-C_{bbk}\theta^k\frac{\partial^2W}{\partial \theta_a^2}\right)\frac{\partial}{\partial \theta_b}\\
        &-\frac{1}{2}\frac{1}{(2\pi\mathrm{i})^2}\left(\frac{\partial W}{\partial \theta_{b}}\frac{\partial^3 W}{\partial \theta_a \partial \theta_b^2}- \frac{\partial W}{\partial \theta_{a}}\frac{\partial^3 W}{ \partial \theta_b^3}\right)\frac{\partial}{\partial \theta_a}\\
    &-\frac{1}{2}\frac{1}{(2\pi\mathrm{i})^2}\left(\frac{\partial W}{\partial \theta_a}\frac{\partial^3W}{\partial \theta_a \partial \theta_b^2}- \frac{\partial W}{\partial \theta_b}\frac{\partial^3W}{\partial \theta_a^2 \partial \theta_b} + \frac{\partial^2W}{\partial \theta_a^2}\frac{\partial^2W}{\partial \theta_b^2}-\left(\frac{\partial^2W}{\partial \theta_a \partial \theta_b}\right)^2\right)\frac{\partial}{\partial \theta_b}\\
    \end{split}
\end{equation}
Looking at the $\partial_{\theta_a}$ component in \eqref{w2eq1}, we obtain

\begin{equation}\label{w2eq2}
    \begin{split}
        \frac{1}{2\pi \mathrm{i}}\frac{\partial^2 W_2}{\partial \theta_b^2}=&\frac{1}{2\pi \mathrm{i}}\frac{\partial^2 W}{\partial b \partial \theta_b}+\frac{1}{(2\pi\mathrm{i})^2}\left(\frac{\partial W}{\partial \theta_a}C_{bbb}-\frac{\partial W}{\partial \theta_b}C_{bba}+C_{bbk}\theta^k\frac{\partial^2W}{\partial \theta_a\partial \theta_b}-C_{bak}\theta^k\frac{\partial^2W}{\partial \theta_b^2}\right)\\
        &-\frac{1}{2}\frac{1}{(2\pi\mathrm{i})^2}\left(\frac{\partial W}{\partial \theta_{b}}\frac{\partial^3 W}{\partial \theta_a \partial \theta_b^2}- \frac{\partial W}{\partial \theta_{a}}\frac{\partial^3 W}{ \partial \theta_b^3}\right)\,.
    \end{split}
\end{equation}
Taking two extra derivatives in $\theta_b$ of \eqref{w2eq2} we obtain

\begin{equation}\label{w2eq4}
    \begin{split}
        \frac{1}{2\pi \mathrm{i}}\frac{\partial^4 W_2}{\partial \theta_b^4}&=\frac{1}{2\pi \mathrm{i}}\frac{\partial^4 W}{\partial b \partial \theta_b^3}+\frac{1}{(2\pi\mathrm{i})^2}\left(-3\frac{\partial^3W}{\partial \theta_b^3}C_{bba}+3\frac{\partial^3 W}{\partial \theta_b^2 \partial \theta_a}C_{bbb}\right)\,.
    \end{split}
\end{equation}
so substituting \eqref{usefulid3} in \eqref{w2eq4} we want to solve

\begin{equation}\label{w2eq5}
    \begin{split}
        \frac{\partial^4 W_2}{\partial \theta_b^4}&=\frac{\partial^4 W}{\partial b \partial \theta_b^3}+\left(\frac{27b}{2\Delta}\frac{\partial^3W}{\partial \theta_b^3}+\frac{9a}{\Delta}\frac{\partial^3 W}{\partial \theta_b^2 \partial \theta_a}\right)\,.\\
    \end{split}
\end{equation}
Using the ansatz \eqref{w2ansatz} (note that the highest power of $r$ is $4$) and \eqref{usefulid}
we find that \eqref{w2eq5} reduces to
\begin{equation*}
    24W_{2,4}=-3\partial_b\frac{a}{\Delta}-\frac{3\cdot 27ab}{2\Delta^2 }-\frac{9a}{2\Delta p^2}=\frac{9\cdot 27ab}{2\Delta^2}-\frac{9a}{2\Delta p^2}
\end{equation*}
and hence 
\begin{equation*}
    W_{2,4}=\frac{3^4ab}{2^4\Delta^2}-\frac{3a}{2^4\Delta p^2}\,.
\end{equation*}
Taking now a derivative in $\theta_b$ and $\theta_a$ in \eqref{w2eq2}  we find that 
\begin{equation}\label{w2eq6}
    \begin{split}
        \frac{1}{2\pi \mathrm{i}}\frac{\partial^4 W_2}{\partial \theta_a\partial \theta_b^3}=&\frac{1}{2\pi \mathrm{i}}\frac{\partial^4 W}{\partial b \partial \theta_a \partial \theta_b^2}+\frac{1}{(2\pi\mathrm{i})^2}\left(2\frac{\partial^3W}{\partial \theta_a^2 \partial \theta_b}C_{bbb}-\frac{\partial^3W}{\partial \theta_a\partial \theta_b^2}C_{bba}-\frac{\partial^3 W}{\partial \theta_b^3}C_{baa}+C_{bbk}\theta^k\frac{\partial^2W}{\partial \theta_a^2\partial \theta_b^2}\right)\\
        &-\frac{1}{2}\frac{1}{(2\pi\mathrm{i})^2}\left(\left(\frac{\partial^3W}{\partial \theta_a \partial\theta_b^2}\right)^2+\frac{\partial^2W}{\partial \theta_b^2}\frac{\partial^4W}{\partial \theta_a^2 \partial\theta_b^2}-\frac{\partial^3W}{\partial \theta_a^2 \partial\theta_b}\frac{\partial^3W}{ \partial\theta_b^3}\right)\,.
    \end{split}
\end{equation}
Using \eqref{usefulid} and our previous $W_{2,4}$ we obtain a differential equation for $W_{2,3}$ having a unique solution odd under $\iota$ given by 
\begin{equation*}
\begin{split}
    W_{2,3}&=\frac{1}{12\Delta p^3}\left(\frac{3aq^2}{2}-\frac{9bq}{4}+a^2\right)+\frac{1}{\Delta^2p}\left(-27abq^2+\left(-\frac{13\Delta}{16}+\frac{81b^2}{2}\right)q-18a^2b\right)\,.
\end{split}
\end{equation*}
Continuing in the same fashion as before, taking two $\theta_a$ derivatives of \eqref{w2eq2} we find 

\begin{equation}\label{w2eq7}
    \begin{split}
        \frac{1}{2\pi \mathrm{i}}&\frac{\partial^4 W_2}{\partial \theta_a^2\partial \theta_b^2}=\\
        &\frac{1}{2\pi \mathrm{i}}\frac{\partial^4 W}{\partial b \partial \theta_a^2 \partial \theta_b}+\frac{1}{(2\pi\mathrm{i})^2}\left(\frac{\partial^3W}{\partial \theta_a^3}C_{bbb}+\frac{\partial^3W}{\partial \theta_a^2\partial \theta_b}C_{abb}+C_{bbk}\theta^k\frac{\partial^4W}{\partial \theta_a^3 \partial \theta_b}-2\frac{\partial^3 W}{\partial \theta_a\partial \theta_b^2}C_{baa}-C_{bak}\theta^k\frac{\partial^2W}{\partial \theta_a^2\partial \theta_b^2}\right)\\
        &-\frac{1}{2}\frac{1}{(2\pi\mathrm{i})^2}\left(\frac{\partial^3W}{\partial \theta_a^2 \partial\theta_b}\frac{\partial^3W}{\partial \theta_a \partial\theta_b^2}+2\frac{\partial^2W}{\partial \theta_a\partial \theta_b}\frac{\partial^4W}{\partial \theta_a^2 \partial\theta_b^2}+\frac{\partial W}{\partial \theta_b}\frac{\partial^5W}{\partial \theta_a^3 \partial\theta_b^2}-\frac{\partial^3W}{\partial \theta_a^3}\frac{\partial^3W}{ \partial\theta_b^3}\right)\,.
    \end{split}
\end{equation}
Using \eqref{usefulid} and our previous $W_{2,3}$ and $W_{2,4}$ we can simplify this to give the differential equation
\begin{equation}\label{a2eq8}
    (2p\partial_q)^2(2W_{2,2})=2p\partial_q\left(-\frac{1}{2\Delta p^3}\left(\frac{27b}{4}q^2+3a^2q+\frac{27ab}{4}\right)+\frac{3a}{2\Delta p}+\frac{6\cdot 27abp}{\Delta^2}\right)\,.
\end{equation}
Using that $2p\partial_qW_{2,2}$ is odd under the involution we can integrate \eqref{a2eq8} once to the unique solution
\begin{equation*}
    (2p\partial_q)(2W_{2,2})=-\frac{1}{2\Delta p^3}\left(\frac{27b}{4}q^2+3a^2q+\frac{27ab}{4}\right)+\frac{3a}{2\Delta p}+\frac{6\cdot 27abp}{\Delta^2}
\end{equation*}
To continue to integrate, we use that 
\begin{equation*}
    2p\partial_q\left(\frac{\alpha q^2+\beta q +\gamma}{p^2}\right)=\frac{(4a\alpha-6\gamma)q^2+(6\alpha b+4a\beta)q+(6b\beta-2\gamma a)}{p^3}-\frac{2\alpha q+4\beta}{p}\,,
\end{equation*}
and find that 
\begin{equation*}
    W_{2,2}=\frac{1}{16\Delta p^2}\left(-3aq+\frac{9b}{2}\right)+\frac{3\cdot 27abq}{2\Delta^2}+c(a,b)\,,
\end{equation*}
for some unknown function $c(a,b)$ depending only on $(a,b)$.\\

Continuing with the computation, to find $W_{2,1}$ we now consider the $\partial_{\theta_b}$ component of \eqref{w2eq1} and take two $\theta_a$ derivatives to obtain
\begin{equation}\label{w2eq9}
    \begin{split}
        \frac{\partial^4W_{2}}{\partial \theta_a^3\partial \theta_b}=&\frac{\partial^4W}{\partial b \partial \theta_a^3}-\left(-3C_{abb}\frac{\partial^3W}{\partial \theta_a^3}+3C_{aab}\frac{\partial^3W}{\partial \theta_a^2 \partial \theta_b}+C_{abk}\theta^k\frac{\partial^4W}{\partial \theta_a^3 \partial \theta_b}-C_{bbk}\theta^k\frac{\partial^4W}{\partial \theta_a^4}\right)\\
        &+\frac{1}{2}\left(3\frac{\partial^3W}{\partial \theta_a^3}\frac{\partial^3W}{\partial \theta_a \partial \theta_b^2}+3\frac{\partial^2W}{\partial \theta_a^2}\frac{\partial^4W}{\partial \theta_a^2 \partial \theta_b^2}+\frac{\partial W}{\partial \theta_a}\frac{\partial^5W}{\partial \theta_a^3 \partial \theta_b^2}-3\left(\frac{\partial^3W}{\partial \theta_a^2 \partial \theta_b}\right)^2\right)\\
        &+\frac{1}{2}\left(-4\frac{\partial^2W}{\partial \theta_a \partial \theta_b}\frac{\partial^4W}{\partial \theta_a^3 \partial \theta_b}-\frac{\partial W}{\partial \theta_b}\frac{\partial^5W}{\partial \theta_a^4 \partial \theta_b}+\frac{\partial^4W}{\partial \theta_a^4}\frac{\partial^2W}{\partial \theta_b^2}\right)\\
    \end{split}
\end{equation}
If we choose\footnote{This choice is made so that we can easily integrate the resulting equation.}
\begin{equation}
c(a,b)=\frac{3\cdot 27(2a^3-11b^2)}{32\Delta^2}
\end{equation}
and use \eqref{usefulid}, $W_{2,4}$, $W_{2,3}$ and $W_{2,2}$ from before, we can then simplify \eqref{w2eq9} to the differential equation
\begin{equation}\label{w2eq10}
    (2p\partial_q)^3W_{2,1}=2p\partial_q\left(\frac{1}{4\Delta p^3}(-3a^2q^2-9abq+a^3)+\frac{3aq}{4\Delta p}-\frac{12\cdot 27 abpq}{\Delta^2}\right)\,.
\end{equation}
Using that $(2p\partial_q)^2W_{2,1}$ is odd under the involution we can integrate once to find the unique odd solution
\begin{equation*}
\begin{split} (2p\partial_q)^2W_{2,1}&=\frac{1}{4\Delta p^3}(-3a^2q^2-9abq+a^3)+\frac{3aq}{4\Delta p}-\frac{12\cdot 27 abpq}{\Delta^2}\\
&=2p\partial_q\left(-\frac{3aq^2+a^2}{8\Delta p^2}-\frac{3\cdot 27 abq^2}{\Delta^2}\right)\,.
\end{split}
\end{equation*}
Hence, integrating once more we find that for some function $d(a,b)$ of $(a,b)$
\begin{equation*}
    2p\partial_qW_{2,1}=-\frac{3aq^2+a^2}{8\Delta p^2}-\frac{3\cdot 27 abq^2}{\Delta^2}+d(a,b)\,.
\end{equation*}
We set $d(a,b)=-27a^2b/\Delta^2$ so that using again that $W_{2,1}$ is odd under $\iota $ we can integrate and find 
\begin{equation*}
    W_{2,1}=\frac{a}{8\Delta p}-\frac{27ab p}{\Delta^2}\,.
\end{equation*}
For the last part, we start from the equation 
\begin{equation}\label{w2eq11}
\begin{split}
\frac{1}{2\pi \mathrm{i}}\frac{\partial^2 W_2}{\partial \theta_a^2}=&\frac{1}{2\pi \mathrm{i}}\frac{\partial^2W}{\partial a \partial \theta_a}+\frac{1}{(2\pi \mathrm{i})^2}\left(-C_{aaa}\frac{\partial W}{\partial  \theta_b}+C_{aab}\frac{\partial W}{\partial \theta_a}+C_{abk}\theta^k\frac{\partial^2 W}{\partial \theta_a^2}-C_{aak}\theta^k\frac{\partial^2W}{\partial \theta_a \partial \theta_b}\right)\\
&+\frac{1}{2(2\pi \mathrm{i})^2}\left(\frac{\partial W}{\partial \theta_a}\frac{\partial^3W}{\partial \theta_a^2 \partial \theta_b}-\frac{\partial W}{\partial \theta_b}\frac{\partial^3 W}{\partial \theta_a^3}\right)\,
\end{split}
\end{equation}
obtained from the $\partial_{\theta_b}$ component of the evaluation of \eqref{maing2eq} on $\partial_{W^1}=\partial_a$, $\partial_{\phi^1}=\partial_{\theta_a}$. Although we will not write the details, we remark that the equation for $\frac{\partial^2W_2}{\partial \theta_a\partial\theta_b}$ obtained from the $\partial_{\theta_a}$ component is compatible with the one obtained from the $\partial_{\theta_b}$ component of \eqref{w2eq1} due to the fact that $W$ satisfies the Pleba\'nski second heavenly equations in $(Z^1,Z^2,\theta^1,\theta^2)$ coordinates. \\

Taking two derivatives of \eqref{w2eq11} in $\theta_a$ we obtain

\begin{equation}\label{w2eq12}
\begin{split}
\frac{1}{2\pi \mathrm{i}}\frac{\partial^2 W_2}{\partial \theta_a^4}=&\frac{1}{2\pi \mathrm{i}}\frac{\partial^2W}{\partial a \partial \theta_a^3}+\frac{1}{(2\pi \mathrm{i})^2}\left(-3C_{aaa}\frac{\partial^3 W}{\partial \theta_a^2\partial  \theta_b}+3C_{aab}\frac{\partial^2 W}{\partial \theta_a^3}+C_{abk}\theta^k\frac{\partial^4 W}{\partial \theta_a^4}-C_{aak}\theta^k\frac{\partial^4W}{\partial \theta_a^3 \partial \theta_b}\right)\\
&+\frac{1}{2(2\pi \mathrm{i})^2}\left(\frac{\partial W}{\partial \theta_a}\frac{\partial^5W}{\partial \theta_a^4 \partial \theta_b}-\frac{\partial W}{\partial \theta_b}\frac{\partial^5W}{\partial \theta_a^5}+2\frac{\partial^2W}{\partial \theta_a^2}\frac{\partial^4W}{\partial \theta_a^3 \partial \theta_b}-2\frac{\partial^2W}{\partial \theta_a \partial \theta_b}\frac{\partial^4W}{\partial \theta_a^4}\right)\,.
\end{split}
\end{equation}
Using \eqref{usefulid} and the previous results, we can rewrite the resulting equation as 
\begin{equation}\label{w2eq13}
(2p\partial_q)^4W_{2,0}=2p\partial_q\left(\frac{162a^2bp}{\Delta^2}+\frac{30\cdot 27abq^2p}{\Delta^2}+\frac{-3\cdot 13\cdot 4a^3+3\cdot 27\cdot 11 b^2}{4\Delta^2}qp\right)\,.
\end{equation}
Using the oddness of $(2p\partial_q^3)W_{2,0}$ can integrate \eqref{w2eq13} uniquely to find 

\begin{equation*}
(2p\partial_q)^3W_{2,0}=\frac{162a^2bp}{\Delta^2}+\frac{30\cdot 27abq^2p}{\Delta^2}+\frac{-3\cdot 13\cdot 4a^3+3\cdot 27\cdot 11 b^2}{4\Delta^2}qp
\end{equation*}
which can be further integrated to 
\begin{equation}
\begin{split}(2p\partial_q)^2W_{2,0}=2p\partial_q\left(\frac{27abqp}{\Delta^2}\right)-\frac{2\cdot 27ab^2}{\Delta^2}+\frac{-3\cdot 13\cdot 4a^3+3\cdot 27\cdot 11 b^2}{16\Delta^2}q^2+e(a,b)
\end{split}
\end{equation}
for some function $e(a,b)$. We choose 
\begin{equation*}
    e(a,b)=\frac{2\cdot 27ab^2}{\Delta^2}+\frac{- 13\cdot 4a^4+ 27\cdot 11 ab^2}{16\Delta^2}
\end{equation*}
so that 
\begin{equation}\label{w2eq14}
    (2p\partial_q)^2W_{2,0}=2p\partial_q\left(\frac{27abqp}{\Delta^2}+\frac{- 13\cdot 4a^3+ 27\cdot 11 b^2}{16\Delta^2}p\right)
\end{equation}
Using the fact that  $2p\partial_qW_{2,0}$ is odd under $\iota$ we can integrate \eqref{w2eq14} uniquely to  

\begin{equation}\label{a2eq15}
    (2p\partial_q)W_{2,0}=\frac{27abqp}{\Delta^2}+\frac{- 13\cdot 4a^3+ 27\cdot 11 b^2}{16\Delta^2}p\,.
\end{equation}
Finally, the latter can be integrated (up to a function of $(a,b)$) to 
\begin{equation*}
\begin{split}
    W_{2,0}=\frac{27abq^2}{4\Delta^2}+\frac{11q}{32\Delta}-\frac{3a^3q}{\Delta^2}
\end{split}
\end{equation*}
Hence, joining everything together we find \eqref{w2compexplicit}.

\bibliography{References}   
\bibliographystyle{alpha}
\end{document}